\newtheorem{Thm}{Theorem}[section]
\newtheorem{Prop}[Thm]{Proposition}
\newtheorem{Lem}[Thm]{Lemma}
\newtheorem{Cor}[Thm]{Corollary}
\newtheorem{Thmint}{Theorem}[section]
\newtheorem{MThm}{Main}
\newtheorem{Corint}[Thmint]{Corollary}
\theoremstyle{definition}
\newtheorem{Rem}[Thm]{Remark}
\newtheorem{Def}[Thm]{Definition}
\newcommand{\Cs}{\mbox{C$^\ast$}}
\DeclareMathOperator{\supp}{supp}
\DeclareMathOperator{\cl}{cl}
\DeclareMathOperator{\inte}{int}
\title[Crossed products of stable rank one]{Almost finiteness for general {\'e}tale groupoids and its applications to stable rank of crossed products}
\author{Yuhei Suzuki}
\subjclass[2000]{Primary~46L05, Secondary~54H20}
\keywords{Stable rank one, almost finiteness, reduced crossed products, pure infiniteness.}
\address{Graduate school of mathematics, Nagoya University, Chikusaku, Nagoya, 464-8602, Japan}
\email{yuhei.suzuki@math.nagoya-u.ac.jp}
\begin{document}
\begin{abstract}
We extend Matui's notion of almost finiteness to general {\'e}tale groupoids
and show that the reduced groupoid C$^\ast$-algebras of minimal almost finite groupoids have stable rank one.
The proof follows a new strategy,
which can be regarded as a local version of the large subalgebra argument.

The following three are the main consequences of our result.
(i) For any group of (local) subexponential growth and for any its minimal action admitting a totally disconnected free factor, the crossed product
 has stable rank one.
(ii) Any countable amenable group admits a minimal action
on the Cantor set all whose minimal extensions form the crossed product of stable rank one.
(iii) For any amenable group,
the crossed product of the universal minimal action has stable rank one.
\end{abstract}
\maketitle
\section{Introduction}\label{Sec:intro}
In the seminal paper \cite{Rie}, Rieffel introduced
the notion of (topological) stable rank for C$^\ast$-algebras.
The stable rank is introduced as a C$^\ast$-algebraic analogue of the Lebesgue covering dimension.
This in fact detects a topological complexity of C$^\ast$-algebras
in the sense that when a given C$^\ast$-algebra has stable rank one (the smallest possible value),
its K-theory behaves well.

In the celebrated paper \cite{Vil}, Villadsen showed
that simple stably finite C$^\ast$-algebras can have arbitrary stable rank.
In fact, Villadsen has constructed such C$^\ast$-algebras as AH-algebras.
His work suggests that even for relatively tractable C$^\ast$-algebras,
it would be hard to compute their stable rank.

In the present paper, we develop a new strategy to compute the stable rank of certain
groupoid C$^\ast$-algebras.
This strategy may be regarded as a local version of the large subalgebra argument (cf.~\cite{Put89}, \cite{Put00}, \cite{Phi05}, \cite{Phi14}, \cite{EN}).
This strategy is originally invented by Putnam in \cite{Put89}.
To describe the difference between the large subalgebra argument and
our strategy, let us briefly recall how the large subalgebra argument works.
For more detailed explanations and applications of large subalgebras,
we refer the reader to the introduction of the paper \cite{Phi14}.
The strategy can be divided into three steps.
Firstly, we find a large C$^\ast$-subalgebra.
Secondly, we study the structure of the large subalgebra.
Finally, we analyze the reduced crossed product based on the large subalgebra.
For the last step, Phillips \cite{Phi14} recently developed general theories.
For the first step, when the group is $\mathbb{Z}$, large subalgebras of the crossed products
can be found as Putnam's orbit-breaking subalgebras \cite{Put89}, \cite{Phi14}.
For the group $\mathbb{Z}^d$; $d\in \mathbb{N}$,
Phillips \cite{Phi05} has found that Forrest's result \cite{For} is sometimes useful to find a large
subalgebra of the crossed product.
However, Forrest's result \cite{For} heavily depends on the fact that
$\mathbb{Z}^d$ is a lattice of the Euclidean space $\mathbb{R}^d$.
Therefore, for groups beyond finitely generated abelian groups (even for $\mathbb{Z}^\infty$),
the first step of the strategy would be very hard to carry out.
Our strategy allows us to avoid this difficulty---
instead of finding a single large subalgebra,
we find a family of homogeneous C$^\ast$-algebras tending to large in a sense.

Inspired by the work of Latr{\'e}moli{\`e}re--Ormes \cite{LO}, Matui \cite{Mat12} has introduced the notion of almost finiteness
for a totally disconnected {\'e}tale groupoid.
He then studies how the homology groups and the topological full group
of an almost finite groupoid reflect dynamical properties of the groupoid.
For other applications of almost finiteness in this direction,
we refer the reader to \cite{Mat15} and \cite{Mat16}.
We extend to general {\'e}tale groupoids the notion of almost finiteness
and adapt this notion in our strategy.
Consequently, we obtain the following Main Theorem of the present paper.

\begin{MThm}\label{Thm:main}
Let $G$ be a minimal almost finite {\'e}tale groupoid.
Then the reduced groupoid C$^\ast$-algebra ${\rm C}^\ast_{\mathrm r}(G)$ has stable rank one.
\end{MThm}
In Appendix \ref{Sec:ES}, we show that any countable non-torsion amenable group
admits continuously many minimal actions on a prescribed space whose transformation groupoids are almost finite and pairwise non-isomorphic.
See Theorem \ref{Thm:examples} for the precise statement.
This is already enough to conclude that the class of groupoids satisfying the assumptions of the Main Theorem is fairly large.
We also remark that the minimality assumption of the Main Theorem is not removable.
Indeed, any compact space is almost finite as a groupoid,
while its groupoid \Cs-algebra does not have stable rank one
unless its covering dimension is less than $2$ \cite{Rie}.

We close this section by presenting applications of the Main Theorem.
Recall that a finitely generated group $\Gamma$ is said to have subexponential growth
if some (equivalently, any) symmetric finite generating set $S$ of $\Gamma$ fails to have a positive constant $C>1$
satisfying $\sharp(S^n)\geq C^n$ for all $n \in \mathbb{N}$.
We say that a group $\Gamma$ is of local subexponential growth
if it can be realized as a directed union of groups of subexponential growth.
It is not hard to see that this property is stable under
taking direct sums, inductive limits, subgroups, quotients.
Typical examples of groups of subexponential growth are virtually nilpotent groups, but it is also known to exist non-virtually-nilpotent groups of subexponential growth.
The first such a group is constructed by Grigorchuk \cite{Gri}.
Thanks to a recent remarkable result of Downarowicz--Zhang (Corollary 6.4 of \cite{DZ}), we conclude the following consequence of the Main Theorem to topological dynamical systems of groups of local subexponential growth.
\begin{Corint}\label{Corint:abe}
Let $\Gamma$ be a group of local subexponential growth.
Let $\alpha$ be a minimal action of $\Gamma$
on a compact space $X$ which factors onto a free action on a totally disconnected space.
Then the reduced crossed product $C(X)\rtimes _{{\mathrm r}, \alpha} \Gamma$ has stable rank one.
\end{Corint}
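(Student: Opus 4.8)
The plan is to deduce the corollary from the Main Theorem by proving that the transformation groupoid $G := X \rtimes_\alpha \Gamma$ is minimal and almost finite; since ${\rm C}^\ast_{\mathrm r}(G) \cong C(X)\rtimes_{{\mathrm r},\alpha}\Gamma$, this yields the assertion. Minimality of $G$ is equivalent to minimality of $\alpha$, which is assumed, so everything comes down to establishing almost finiteness of $G$.

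First I would pass from the (possibly non-free, possibly infinite-dimensional) system $(X,\alpha)$ to its totally disconnected free factor. Write $\pi \colon X \to Y$ for the given factor map onto a free action $\beta$ of $\Gamma$ on a totally disconnected compact space $Y$, and set $H := Y \rtimes_\beta \Gamma$. Then $H$ is principal (as $\beta$ is free), minimal (as $\beta$ is a factor of the minimal action $\alpha$), and totally disconnected, and $\pi$ induces a continuous, surjective, proper groupoid homomorphism $\rho \colon G \to H$, $(x,g)\mapsto(\pi(x),g)$, which restricts to a bijection from each source fibre $G_x$ onto $H_{\pi(x)}$. The whole argument will be organised around this fibrewise bijectivity of $\rho$.

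Next I would verify that $H$ is almost finite. When $\Gamma$ is finitely generated of subexponential growth this is essentially Corollary 6.4 of \cite{DZ}, which produces, for every free action on a zero-dimensional compact space and all $\epsilon>0$ and compact $K$, a clopen tiling---equivalently a $(K,\epsilon)$-invariant elementary subgroupoid in Matui's sense. For a group $\Gamma=\bigcup_i\Gamma_i$ of local subexponential growth I would invoke the directed union: given compact $K\subseteq H$ and $\epsilon>0$, pick $i$ with $K\subseteq Y\times F$ for a finite $F\subseteq\Gamma_i$, apply the Downarowicz--Zhang result to the free action of the subexponential-growth group $\Gamma_i$ on $Y$ to obtain a $(K,\epsilon)$-invariant elementary subgroupoid inside the clopen subgroupoid $Y\rtimes_\beta\Gamma_i\subseteq H$, and observe that it is also an elementary subgroupoid of $H$ with the same invariance. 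Hence $H$ is almost finite.

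Finally I would transport almost finiteness from $H$ to $G$ along $\rho$. Given compact $K\subseteq G$ and $\epsilon>0$, apply almost finiteness of $H$ to $\rho(K)$ and $\epsilon$ to get a $(\rho(K),\epsilon)$-invariant elementary subgroupoid $H'\subseteq H$, and set $G':=\rho^{-1}(H')$. Properness of $\rho$ makes $G'$ compact open; since $\rho^{-1}(Y)=X$ one has $G^{(0)}\subseteq G'$; the clopen bisections realising the matrix-unit structure of $H'$ pull back along $\rho$ to clopen bisections of $G'$ over the $\pi$-preimages of the corresponding clopen pieces of $Y$, so that $G'$ is elementary in the sense of the generalised definition of almost finiteness; and fibrewise bijectivity of $\rho$ gives $\sharp(G'_x)=\sharp(H'_{\pi(x)})$ and $\sharp(KG'_x\setminus G'_x)=\sharp(\rho(K)H'_{\pi(x)}\setminus H'_{\pi(x)})$ for each unit $x$, so the $(K,\epsilon)$-invariance of $G'$ follows from that of $H'$. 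Thus $G$ is almost finite, and the Main Theorem applies. I expect the main obstacle to be exactly this last step: pinning down the right notion of a proper fibrewise-bijective factor map of {\'e}tale groupoids and checking carefully that $\rho^{-1}(H')$ satisfies every clause of the definition of an elementary subgroupoid (and of almost finiteness) over the general base $X$, so that all the quantitative F\o lner-type estimates transport faithfully. A secondary point requiring care is that subexponential growth is defined only for finitely generated groups, which forces the reduction to each $\Gamma_i$ and the explicit verification of any second-countability hypotheses in Corollary 6.4 of \cite{DZ}.
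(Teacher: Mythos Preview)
Your approach is essentially the same as the paper's: pass to the totally disconnected free factor $H=Y\rtimes_\beta\Gamma$, prove $H$ is almost finite, and pull this back along the factor map (this last step is exactly the paper's Lemma~\ref{Lem:quo}). The reduction to finitely generated subgroups $\Gamma_i$ matches the paper's reduction to countable $\Gamma$.

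The one point you correctly flag but do not resolve is the second-countability hypothesis in Corollary~6.4 of \cite{DZ}: that result is stated for zero-dimensional compact \emph{metric} spaces, whereas $Y$ need not be metrizable. The paper handles this by inserting an additional reduction: once $\Gamma$ is countable, Lemma~3.1 of \cite{Suz17} produces a further free factor of $\beta$ on a metrizable totally disconnected space, to which \cite{DZ} applies directly; almost finiteness then propagates back up through two applications of Lemma~\ref{Lem:quo}. With this extra step inserted, your outline coincides with the paper's proof.
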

In the integer group case, this statement was announced in \cite{AP}.
Phillips also announced in the same paper that this result also holds true for the finitely generated free abelian groups.
Corollary \ref{Corint:abe} includes these statements,
and we believe that even for these specific cases,
our new proof is conceptually and practically simpler.
(Note that for abelian groups, we only need Lemma 6.3 of \cite{Mat12},
whose proof is much simpler than Forrest's theorem \cite{For}.)

We also remark that, even in the simplest case $\Gamma=\mathbb{Z}$,
Giol--Kerr \cite{GK} has constructed a minimal action
which is pathological in the following sense.
Its crossed product fails to have the following regularity properties; real rank zero, strict comparison, finite nuclear dimension, finite decomposition rank, tracial AF, Jiang--Su algebra stable. (For the definitions and backgrounds of these properties, we refer the reader to \cite{Win}.)
Since Giol--Kerr's action is obtained as an extension of the universal odometer transformation, it satisfies the assumptions of Corollary \ref{Corint:abe}.
This means that
\begin{enumerate}
\item
the stable rank one condition obtained in the Main Theorem
is almost the best possible result under our mild assumption,
\item
our work provides a general framework to analyze groupoid \Cs -algebras beyond the classifiable class in the sense of Elliott's program (see \cite{Win} for the precise meaning).
\end{enumerate}
As the classification theorem of {\em classifiable} \Cs-algebras is now almost complete \cite{Win} (modulo the UCT condition, which is automatic for amenable groupoid \Cs-algebras \cite{HK}, \cite{Tu99}),
our results would motivate next interesting targets of structure theory of \Cs-algebras.

We also have the following result for general countable amenable groups.
\begin{Corint}\label{Corint:ame}
Any countable infinite amenable group $\Gamma$ admits a minimal free action on the Cantor set
whose all minimal extensions have the crossed product of stable rank one.
\end{Corint}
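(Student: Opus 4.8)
The plan is to reduce the statement to the Main Theorem via two auxiliary facts: a good choice of base action, and the stability of almost finiteness under extensions over a totally disconnected base. For the base, I would use the theorem of Downarowicz--Zhang (Corollary~6.4 of \cite{DZ}) to obtain, for the given countable infinite amenable group $\Gamma$, a minimal free action $\Gamma \curvearrowright X$ on the Cantor set whose transformation groupoid $X \rtimes \Gamma$ is almost finite --- the tiling structure furnished by that result makes this straightforward, and when $\Gamma$ is non-torsion one can alternatively quote Theorem~\ref{Thm:examples}. The proof then amounts to showing that every minimal extension of $\Gamma \curvearrowright X$ is again almost finite, after which the Main Theorem applies.

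The heart of the matter is the following permanence property, which I would establish by pulling back castles: if $\pi \colon Y \to X$ is a continuous $\Gamma$-equivariant surjection from a compact metrizable $\Gamma$-space $Y$, then $Y \rtimes \Gamma$ is again almost finite. The map $(y,\gamma) \mapsto (\pi(y),\gamma)$ is a proper groupoid homomorphism $Y \rtimes \Gamma \to X \rtimes \Gamma$ that is bijective on every source fibre. So, given a compact $K \subseteq Y \rtimes \Gamma$ and $\epsilon > 0$, its image is a compact subset of $X \rtimes \Gamma$, and almost finiteness of $X \rtimes \Gamma$ provides a castle over $X$ with clopen tower bases, with F{\o}lner-small combinatorial boundary, and with a dynamically small remainder. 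Since $X$ is totally disconnected, the $\pi$-preimages of the tower bases are clopen in $Y$; by equivariance each group element carries such a preimage homeomorphically onto the preimage of the corresponding level, and pairwise disjointness together with the combinatorial shape data of the castle are preserved verbatim. We thus obtain a genuine castle over $Y$ with the same (small) boundary, whose remainder is the $\pi$-preimage of the remainder downstairs. This remainder stays small for two reasons: every $\Gamma$-invariant probability measure on $Y$ pushes forward to one on $X$, so the remainder in $Y$ has measure $<\epsilon$ against all invariant measures; and a finite family of bisections witnessing that the remainder over $X$ is subequivalent to an open subset of a single tower level pulls back, set by set, to such a family over $Y$. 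This verifies the defining conditions of almost finiteness for $Y \rtimes \Gamma$.

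To conclude, let $\Gamma \curvearrowright Y$ be any minimal extension of $\Gamma \curvearrowright X$. Freeness of $\Gamma \curvearrowright X$ forces $\Gamma \curvearrowright Y$ to be free (a fixed point upstairs would map to one downstairs), and minimality of $Y$ makes $Y \rtimes \Gamma$ a minimal {\'e}tale groupoid, which is almost finite by the permanence property just described. The Main Theorem then yields that ${\rm C}^\ast_{\mathrm r}(Y \rtimes \Gamma) \cong C(Y) \rtimes_{\mathrm r} \Gamma$ has stable rank one; taking $Y = X$ shows in particular that the crossed product of the base action itself has stable rank one, so $\Gamma \curvearrowright X$ is the required action. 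The only substantial step is the permanence property of the previous paragraph, and its delicate point is the transport of the quantitative smallness conditions: the F{\o}lner-type estimate on the boundary is automatic because the shape data is untouched, but the smallness of the remainder inside the a priori high-dimensional space $Y$ has to be read off from the base. It is precisely here that the total disconnectedness of $X$ --- which makes the pulled-back castles clopen --- is used essentially, and one has to line up the exact form of the definition of almost finiteness on the two sides of $\pi$.
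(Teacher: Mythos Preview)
Your overall strategy matches the paper's: pick a base minimal free Cantor action with almost finite transformation groupoid, show almost finiteness is inherited by any extension (the paper's Lemma~\ref{Lem:quo}), and invoke the Main Theorem. The paper's proof is literally ``an immediate consequence of the Main Theorem, Lemmas~\ref{Lem:quo}, \ref{Lem:cas}, and Theorem~\ref{Thm:Ker}.''

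There is, however, a genuine gap in your choice of base action. Corollary~6.4 of \cite{DZ} applies only to groups of (local) subexponential growth, not to arbitrary countable amenable groups; the paper uses it solely for Corollary~\ref{Corint:abe}. For general amenable $\Gamma$ there is no known theorem asserting that \emph{every} free Cantor action is almost finite, so you cannot simply cite \cite{DZ}. The correct input is Theorem~\ref{Thm:Ker} (Conley--Jackson--Kerr--Marks--Seward--Tucker-Drob), which guarantees that a \emph{generic} minimal free Cantor action admits clopen tower decompositions of arbitrary invariance---hence at least one such action exists. Alternatively, Theorem~\ref{Thm:amecas} constructs one directly.

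A second, smaller point: your permanence argument is phrased in Kerr's castle-with-remainder language, but in the paper's definition of almost finiteness (via elementary subgroupoids) there is \emph{no} remainder---the clopen towers cover $X$ exactly. Once you work in that formulation, the permanence step becomes the two-line observation in the proof of Lemma~\ref{Lem:quo}: the preimage of an elementary subgroupoid under a proper, fibrewise-bijective groupoid homomorphism is again elementary, and $(C,\epsilon)$-invariance transfers verbatim. Your discussion of pushing forward invariant measures and pulling back subequivalence witnesses for the remainder is unnecessary here. Finally, you need not assume $Y$ metrizable; Lemma~\ref{Lem:quo} has no such hypothesis, and the corollary is stated for arbitrary minimal extensions.
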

To the best knowledge of the author, Corollary \ref{Corint:ame} is a new phenomenon in
the study of topological dynamical systems of amenable groups.
Minimal actions appearing in the statement can be generic minimal actions
as shown in \cite{Ker}.
We recall that any amenable minimal action admits
minimal extensions on various spaces \cite{GW}, \cite{Suz15}. See Proposition 2.1 in \cite{Suz15} for the precise statement.
In Appendix \ref{Sec:pi}, we give a non-amenable analogue of Corollary \ref{Corint:ame}.

The Main Theorem also reveals the structure of the crossed product of the universal minimal action for amenable groups.
Recall that any group $\Gamma$ admits a minimal action on a compact space
which factors onto every minimal action on a compact space.
In \cite{Ell}, Ellis showed that this universal property characterizes a unique minimal action up to conjugacy.
This action is called the universal minimal action of $\Gamma$.
The universal minimal action is known to be a free (minimal) action on a Stonean space.
\begin{Corint}\label{Corint:universal}
For any amenable group,
the crossed product of the universal minimal action has stable rank one.
\end{Corint}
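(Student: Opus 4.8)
Write $\beta$ for the universal minimal action of $\Gamma$ on its Stonean space $X_{\mathrm u}$. Since $\Gamma$ is amenable, $\beta$ is a (topologically) amenable action, so the crossed product in question is $C(X_{\mathrm u})\rtimes_{\mathrm r,\beta}\Gamma={\rm C}^\ast_{\mathrm r}(X_{\mathrm u}\rtimes_\beta\Gamma)$, the reduced C$^\ast$-algebra of the transformation groupoid. As $X_{\mathrm u}$ is Stonean it is compact, Hausdorff and totally disconnected, so $X_{\mathrm u}\rtimes_\beta\Gamma$ is a totally disconnected {\'e}tale groupoid, and it is minimal because $\beta$ is. By the Main Theorem it therefore suffices to prove that $X_{\mathrm u}\rtimes_\beta\Gamma$ is almost finite.

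Suppose first that $\Gamma$ is countably infinite (if $\Gamma$ is finite then $X_{\mathrm u}$ is discrete and the crossed product is finite-dimensional, hence of stable rank one). Let $\alpha\colon\Gamma\curvearrowright Z$ be a free minimal action on the Cantor set as in Corollary~\ref{Corint:ame}. By the universal property of $\beta$ there is a factor map $\beta\to\alpha$, and since $\beta$ is minimal this exhibits $\beta$ as a minimal extension of $\alpha$; Corollary~\ref{Corint:ame} then gives exactly that $C(X_{\mathrm u})\rtimes_{\mathrm r,\beta}\Gamma$ has stable rank one. Unwinding this, I expect the engine behind Corollary~\ref{Corint:ame} to be the fact that a free minimal action of a countable amenable group on a totally disconnected space has almost finite transformation groupoid (via the Downarowicz--Zhang tiling theorem, used as in Corollary~\ref{Corint:abe}), together with the permanence of almost finiteness under minimal extensions of totally disconnected groupoids; applying this to $\beta$ and then the Main Theorem yields the conclusion.

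For a general amenable group $\Gamma$ the plan is to reduce to the countable case, exploiting that almost finiteness is a \emph{local} condition: any instance of it for $X_{\mathrm u}\rtimes_\beta\Gamma$ involves only a finite subset of the groupoid, hence only finitely many elements of $\Gamma$ and finitely many clopen subsets of $X_{\mathrm u}$, so it suffices to verify it inside the open subgroupoid $X_{\mathrm u}\rtimes_\beta\Gamma_0$ for a suitably chosen countable subgroup $\Gamma_0\le\Gamma$. Here $\Gamma_0$ still acts freely on $X_{\mathrm u}$, but no longer minimally, and $X_{\mathrm u}$ is far from metrizable; so the remaining task---and the point where I expect the only genuine difficulty to lie---is to show that a free action of a countable amenable group on an arbitrary totally disconnected compact space has almost finite transformation groupoid. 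I would obtain this by presenting such a system as an inverse limit of metrizable zero-dimensional ones (using finite clopen partitions and countability of $\Gamma_0$) and either running the Downarowicz--Zhang construction directly, since it only uses finite clopen data, or combining the Cantor-set case with the permanence of almost finiteness under inverse limits of totally disconnected groupoids. Granting this, $X_{\mathrm u}\rtimes_\beta\Gamma$ is almost finite and minimal, and the Main Theorem completes the proof.
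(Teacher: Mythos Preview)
Your treatment of the countable case is fine: invoking Corollary~\ref{Corint:ame} directly is legitimate, since the universal minimal action is a minimal extension of the Cantor action supplied there. Your ``unwinding'', however, misidentifies the mechanism: the engine behind Corollary~\ref{Corint:ame} is not that \emph{every} free minimal action of a countable amenable group on a totally disconnected space is almost finite, but only that a \emph{generic} one is (Theorem~\ref{Thm:Ker}), combined with the permanence lemma (Lemma~\ref{Lem:quo}). The Downarowicz--Zhang result (Corollary~6.4 of \cite{DZ}) used in Corollary~\ref{Corint:abe} applies only to groups of local subexponential growth, not to arbitrary amenable groups.

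This misreading becomes a genuine gap in your treatment of the uncountable case. Your reduction to a countable subgroup $\Gamma_0$ is correct (and indeed is carried out in the proof of Corollary~\ref{Corint:abe}), but the remaining claim --- that every free action of a countable amenable group on a totally disconnected compact space has almost finite transformation groupoid --- is not proved in the paper and was not known at the time. Neither of your proposed routes works: the Downarowicz--Zhang dynamical tiling theorem is restricted to subexponential growth, and the ``Cantor-set case'' you would need (almost finiteness for \emph{all} free actions, not just generic ones) is exactly the missing ingredient. Passing to a metrizable factor does not help for the same reason.

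The paper avoids this problem entirely. Rather than trying to show that an arbitrary free action is almost finite, Theorem~\ref{Thm:amecas} \emph{constructs} a single almost finite minimal free factor for any amenable $\Gamma$ (countable or not): it uses the Downarowicz--Huczek--Zhang exact tilings of each finitely generated subgroup $\Gamma_F$, pushes them into $\ell^\infty(\Gamma)$ via coset decompositions, and lets the resulting $\Gamma$-invariant clopen data generate a totally disconnected $\Gamma$-space whose transformation groupoid is almost finite by design. The universal minimal action factors onto this, so Lemma~\ref{Lem:quo} transfers almost finiteness upward, and the Main Theorem finishes. The point is that one never needs almost finiteness of an \emph{arbitrary} free amenable action --- only of one explicitly built to have it.
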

In Appendix \ref{Sec:pi} we give a non-amenable analogue of Corollary \ref{Corint:universal} (see Corollary \ref{Cor:univpi}).
\subsection*{Organization of the paper}
Section \ref{Sec:pre} is the preliminary section.
We also fix the notations in this section.
In Section \ref{Sec:af}, we introduce and study the notion of almost finiteness for
general {\'e}tale groupoids.
In Section \ref{Sec:Main}, we prove the Main Theorem.
In Section \ref{Sec:app}, we discuss applications of the Main Theorem to the crossed products
of topological dynamical systems.
We further provide two appendices.
In Appendix \ref{Sec:pi}, based on a technique developed in Section \ref{Sec:Main},
we study pure infiniteness of the reduced groupoid C$^\ast$-algebras.
In particular, we solve the question of the pure infiniteness
of the reduced crossed product for minimal extensions.
In Appendix \ref{Sec:ES}, for any space in a certain large class,
we show that any countable non-torsion amenable group admits
a family of continuously many minimal free actions on it whose transformation groupoids are almost finite and pairwise non-isomorphic.
To distinguish groupoids, we introduce a new invariant which we call the eigenvalue set.
We also discuss a non-amenable analogue of this result.

\section{Preliminaries}\label{Sec:pre}
In this section, we recall some terminologies used throughout the paper.
We first fix the notations used in the paper.

\subsection*{Notations}
\begin{itemize}
\item For a set $S$, $\sharp S$ denotes the cardinality of $S$.
\item For a subset $S$ of a set $X$,
$\chi_S$ denotes the characteristic function of $S$.
\item For a subset $S$ of a topological space $X$,
the symbols $\cl(S)$ and $\inte(S)$ stand for
the closure and interior of $S$ respectively.
\item
For a map $\tau \colon A \rightarrow B$ between
subsets of a set $X$ and for a subset $U\subset X$,
we define $\tau(U):=\tau(U\cap A)$.
Similarly, for such a map $\tau$ and a function $f\colon X\rightarrow \mathbb{C}$,
we define the function $f\circ \tau \colon X\rightarrow \mathbb{C}$ to be
\[
(f\circ \tau)(s):=\left\{ \begin{array}{ll}
(f\circ \tau)(s) &{\rm if\ }s\in A\\
0 & {\rm otherwise}.\\
\end{array} \right.
\]
\item For an action $\alpha$ of a group $\Gamma$ on a set $X$,
we denote $\alpha_s(x)$ by $sx$ for short.
Similarly, for subsets $S\subset \Gamma$ and $U\subset X$,
we denote $\bigcup_{s\in S} \alpha_s(U)$ by $SU$.
\item For a probability space $(X, \mu)$ and a $\mu$-integrable function $f$,
we denote the integral $\int_{X} f d\mu$ by $\mu(f)$ for short.
\item For a compact space $X$ and an open subset $U \subset X$,
we identify $C_0(U)$ with the ideal of $C(X)$ consisting of
all functions vanishing on $X\setminus U$ in the canonical way.
\item For a C$^\ast$-algebra $A$ and $n\in \mathbb{N}$,
$M_n(A)$ denotes the C$^\ast$-algebra of all $n$-by-$n$ matrices over $A$.

\end{itemize}
\subsection{Groupoids}\label{SubSec:Groupoids}
Here we briefly recall basic facts on groupoids.
We refer the reader to Section 5.6 of \cite{BO} or \cite{Ren} for proper treatments.

A groupoid $G$ is a set equipped with
the subset $G^{(0)}$ of $G$ (the unit space), the maps $r, s\colon G \rightarrow G^{(0)}$
(the range and source maps),
and the map 
\[G^{(2)}:=\left\{(g, h) \in G^2 : s(g) = r(h)\right\} \rightarrow G;
(g, h) \mapsto gh~
{\rm (the~ groupoid~ multiplication)}\]
satisfying the following axioms.
\begin{enumerate}
\item For any $g\in G^{(0)}$, $r(g)=s(g)=g$.
\item For any $(g, h) \in G^{(2)}$, we have
$r(gh)=r(g)$ and $s(gh)=s(h)$.
\item For any $g\in G$, $r(g) g=gs(g) =g$.
\item For any pair $(g_1, g_2), (g_2, g_3)$ in $G^{(2)}$, $(g_1 g_2)g_3= g_1(g_2 g_3)$.
\item For any $g \in G$, there is a (unique) element $g^{-1} \in G$
with $g g^{-1}=r(g), g^{-1} g=s(g)$.
\end{enumerate}
A topological groupoid $G$ is a groupoid equipped with a topology
which makes all operation maps continuous.
An {\`e}tale groupoid is a topological groupoid whose range and source maps are local homeomorphisms. By this condition, the unit space is clopen in an {\`e}tale groupoid.

Throughout the paper, groupoids are assumed to be locally compact, Hausdorff,
{\'e}tale, and their unit spaces are assumed to be compact (possibly non-metrizable). 

Motivating examples of groupoids in the present work are coming from topological dynamical systems.
Let $\alpha$ be an action of a group $\Gamma$ on a compact Hausdorff space $X$.
The transformation groupoid $X\rtimes _\alpha \Gamma$ of $\alpha$ is the groupoid obtained
by equipping the product space $\Gamma \times X$ with the following groupoid structure.
The unit space is $\{e\} \times X$ which we often identify with $X$.
The range and source maps $r$ and $s$ are defined to be
$r(g, x):=\alpha_g(x)$ and $s(g, x):=x$ respectively.
Then, for a composable pair $((g, x), (h, y))$ (i.e., in the case $x=\alpha_h(y)$,)
the composite $(g, x)(h, y)$ is defined to be $(gh, y)$.

Let $G$ be a groupoid.
Let $C_{\mathrm c}(G)$ denote the space of compactly supported complex valued continuous functions on $G$.
We equip $C_{\mathrm c}(G)$ with the convolution product.
Since we do not consider the pointwise product,
we simply denote the convolution product of two functions $f, g\in C_{\mathrm c}(G)$
by $fg$. Namely, for $f, g \in C_{\mathrm c}(G)$ and $x\in G$,
\[(fg)(x)=\sum_{yz=x} f(y)g(z).\]
The involution on $C_{\mathrm c}(G)$ is given by
$f^\ast (x)=\overline{f(x^{-1})}$ for $f\in C_{\mathrm c}(G)$ and $x\in G$.
These operations make $C_{\mathrm c}(G)$ a $\ast$-algebra with the unit $\chi_{G^{(0)}}$.
The reduced groupoid C$^\ast$-algebra C$^\ast _{\mathrm r}(G)$ of $G$
is the unique C$^\ast$-completion of the $\ast$-algebra $C_{\mathrm c}(G)$
which admits a (unique) faithful conditional expectation
\[E\colon {\rm C}^\ast _{\mathrm r}(G) \rightarrow C(G^{(0)})\]
satisfying $E(f)=f|_{G^{(0)}}$ for all $f\in C_{\mathrm c}(G)$.
We refer to $E$ as the canonical conditional expectation of ${\rm C}^\ast _{\mathrm r}(G)$.
We regard $C_{\mathrm c}(G)$ as a $\ast$-subalgebra of ${\rm C}^\ast_{\mathrm r}(G)$.
With this identification, positivity and norm make a sense
for elements of $C_{\mathrm c}(G)$.
We note that for the transformation groupoid of a given action,
its reduced groupoid \Cs -algebra
is naturally identified with the associated reduced crossed product \Cs-algebra.

A subset $U$ of $G^{(0)}$ is said to be $G$-invariant
if it satisfies $r(GU)= U$.
Here for two subsets $V, W$ of a groupoid $G$,
define
\[VW:=\{vw: v\in V, w\in W, s(v)=r(w)\}.\]
When $W$ is a singleton $\{ w\}$,
we simply denote $VW$ by $Vw$. 
A groupoid $G$ is said to be minimal if
there is no proper open $G$-invariant subset of $G^{(0)}$.
A groupoid $G$ is said to be principal (resp.~essentially principal) if
the set (resp.~the interior of the set)
\[\{g\in G\setminus G^{(0)}: r(g)=s(g)\}\]
is empty.
These notions have a strong connection with the simplicity of the reduced groupoid C$^\ast$-algebras as follows.
When the reduced groupoid C$^\ast$-algebra is simple,
then obviously the groupoid must be minimal.
Conversely, when a groupoid is minimal and essentially principal,
the reduced groupoid C$^\ast$-algebra is known to be simple \cite{AS}.

A subset $V$ of $G$ is said to be a $G$-set if
both the range and source maps are injective on $V$.
For any compact $G$-set $V$,
the formula $\tau_V:=r\circ (s|_V)^{-1}$
defines a homeomorphism
from $s(V)$ onto $r(V)$.
Set
\[ [[G]]:=\{\tau_V \colon V{\rm\ is\ a\ compact\ open\ }G\mathchar`-{\rm set\ with\ }r(V)=s(V)=G^{(0)}\}.\]
Then $[[G]]$ is a group of homeomorphisms on $G^{(0)}$.
We call $[[G]]$ the topological full group of $G$.
Note that for totally disconnected groupoids,
this definition coincides with the standard definition (cf.~\cite{GPS99}, \cite{Mat15} for instance).

A probability regular Borel measure $\mu$ on $G^{(0)}$
is said to be $G$-invariant if
for any compact $G$-set $V$ and any Borel subset $B$ of $s(V)$,
we have $\mu(\tau_V(B))=\mu(B)$.
Note that by the uniqueness statement of the Riesz representation theorem,
one can show that a probability regular Borel measure $\mu$ on $G^{(0)}$
is $G$-invariant if and only if
for any compact $G$-set $V$ and any continuous function $f\in C(G^{(0)})$
supported on $r(V)$,
we have $\mu(f)=\mu(f\circ \tau_V)$.
Let $M(G)$ denote the set of all $G$-invariant probability measures.
Obviously $M(G)$ is compact with respect to the weak-$\ast$ topology.
Every $\mu \in M(G)$ defines a tracial state
on the reduced groupoid C$^\ast$-algebra by the formula $\mu\circ E$.
Since $E$ is faithful, the resulting tracial state is faithful if and only if
the support of $\mu$ is $G^{(0)}$.

For a nonempty closed subset $U$ of $G^{(0)}$, set
\[G_U :=r^{-1}(U)\cap s^{-1}(U).\]
Then the relative topology makes $G_U$ an {\'e}tale groupoid.

A subset $U$ of $G^{(0)}$ is said to be $G$-full
if it satisfies $r(GU)=G^{(0)}$.
Note that for any open $G$-full subset $U$,
by the compactness of $G^{(0)}$,
one can find a finite sequence $V_1, \ldots, V_n$ of compact $G$-sets
satisfying $\bigcup_{i=1}^n \tau_{V_i}(U)=G^{(0)}$.
Indeed, since $G$ is \`etale and locally compact,
one can choose a family $(V_\lambda)_{\lambda \in \Lambda}$ of compact $G$-sets
satisfying $\bigcup_{\lambda \in \Lambda} \inte(V_\lambda)= G$.
This implies 
$\bigcup_{\lambda \in \Lambda} \inte(\tau_{V_\lambda}(U))=G^{(0)}$.
Thus the family $(V_\lambda)_{\lambda \in \Lambda}$ contains the desired sequence.

\subsection{Factors and extensions}
Here we briefly recall basic terminologies of topological dynamical systems.
From now on, the symbol $\Gamma$ stands for a discrete group (possibly uncountable), and the symbols $X, Y$ stand for compact Hausdorff spaces (possibly nonmetrizable).
Actions of $\Gamma$ on $X, Y$ are always assumed to be continuous.

An action $\alpha$ of $\Gamma$ on $X$ is said to be minimal
if the transformation groupoid is minimal.
An $\alpha$ is said to be free (resp.~essentially free)
if the transformation groupoid is principal (resp.~essentially principal).
For two actions $\alpha$, $\beta$ of a group $\Gamma$ on compact spaces $X$, $Y$,
$\alpha$ is said to be an extension of $\beta$ (or $\alpha$ extends $\beta$)
if there is a $\Gamma$-equivariant quotient map $\pi \colon X\rightarrow Y$.
In this case $\beta$ is said to be a factor of $\alpha$ (or $\beta$ factors onto $\alpha$). We say that an extension is a minimal extension if it is a minimal action.

\subsection{Stable rank}
Here we recall the definition and basic facts of stable rank for unital C$^\ast$-algebras.
We refer the reader to \cite{Rie} for further information.
\begin{Def}
Let $A$ be a unital C$^\ast$-algebra.
For each $n\in \mathbb{N}$,
define
\[{\rm Lg}_n(A):=\{(a_1, \ldots, a_n) \in A^n : A a_1 + \cdots + A a_n=A\}.\]
Then the stable rank of $A$ is defined to be the value
\[\min \{n\in \mathbb{N}: {\rm Lg}_n(A){\rm \ is \ norm\ dense\ in\ } A^n\}.\]
\end{Def}
Obviously, the stable rank takes the value in $\{ 1, 2, \ldots, \infty\}$.
For any $n\in \{1, 2, \ldots, \infty\}$, Villadsen \cite{Vil} has constructed
a simple unital separable AH-algebra of stable rank $n$.

For the stable rank one, Rieffel has found the following useful characterization.
\begin{Prop}[\cite{Rie}, Proposition 3.1]
A unital C$^\ast$-algebra $A$ has stable rank one if and only
if the set of invertible elements of $A$ is norm dense in $A$.
\end{Prop}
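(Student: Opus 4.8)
The plan is to prove the two implications of the biconditional separately; write $\GL(A)$ for the set of invertible elements of $A$. The direction ``$\GL(A)$ norm dense $\Rightarrow$ stable rank one'' is immediate: every $a\in\GL(A)$ has $Aa=A$, since $1=(a^{-1})a\in Aa$, so $\GL(A)\subseteq{\rm Lg}_1(A)$, and norm density of $\GL(A)$ forces norm density of ${\rm Lg}_1(A)$, which is exactly stable rank one.

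For the converse I would argue as follows. Assume ${\rm Lg}_1(A)$ is norm dense. Since $A$ is unital, ${\rm Lg}_1(A)=\{a\in A:ba=1\text{ for some }b\in A\}$ is exactly the set of left invertible elements, so it suffices to show ${\rm Lg}_1(A)\subseteq\overline{\GL(A)}$; then $\overline{\GL(A)}\supseteq\overline{{\rm Lg}_1(A)}=A$. Fix a left invertible $a$, say $ba=1$. First I record a one-sided polar decomposition: from $1=a^*b^*ba\le\|b\|^2 a^*a$ we get $a^*a\ge\|b\|^{-2}1$, hence $|a|:=(a^*a)^{1/2}$ is invertible by continuous functional calculus, the element $v:=a|a|^{-1}$ satisfies $v^*v=1$, and $a=v|a|$. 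The crucial step is to apply the density hypothesis to $v^*$ rather than to $a$: given $\varepsilon\in(0,1)$, pick a left invertible $d$ with $\|d-v^*\|<\varepsilon$, say $cd=1$. Then $d^*$ has the right inverse $c^*$ (adjoint of $cd=1$), while $\|v^*d^*-1\|=\|v^*(d^*-v)\|\le\|d-v^*\|<1$ shows $v^*d^*$ is invertible, so $(v^*d^*)^{-1}v^*$ is a left inverse of $d^*$. An element of a unital algebra possessing a left inverse and a right inverse is invertible, so $d^*\in\GL(A)$; hence $d^*|a|\in\GL(A)$, and since $d^*|a|-a=(d^*-v)|a|$ we obtain $\|d^*|a|-a\|\le\|d-v^*\|\,\||a|\|<\varepsilon\||a|\|$. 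Letting $\varepsilon\to 0$ yields $a\in\overline{\GL(A)}$, as required.

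I do not anticipate a genuine obstacle here. The one point that is not purely formal is the decision to perturb $v^*$ instead of $a$: the hypothesis a priori supplies only \emph{left} invertible approximants, and the identity $v^*v=1$ is precisely what upgrades such an approximant to an honest invertible element; everything else is continuous functional calculus for the polar decomposition together with the elementary observation that a left inverse and a right inverse must coincide and give two-sided invertibility.
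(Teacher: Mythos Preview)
Your argument is correct. Note, however, that the paper does not supply its own proof of this proposition: it is stated with a citation to Rieffel (Proposition~3.1 of \cite{Rie}) and immediately followed by the next result, so there is nothing in the paper to compare your proof against.

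For what it is worth, your proof is essentially Rieffel's original one. The polar factorisation $a=v|a|$ with $v$ an isometry, followed by a perturbation argument that upgrades a one-sided inverse of the approximant to a two-sided inverse via the relation $v^*v=1$, is exactly the mechanism in \cite{Rie}. Your choice to perturb $v^*$ rather than $v$ is a harmless variant: one could equally well approximate $v$ by a left-invertible $c$, observe $c$ is close to the isometry $v$ so $c^*c$ is invertible, and combine the left inverse of $c$ with the right inverse $(c^*c)^{-1}c^*$ to conclude $c\in\GL(A)$. Either way the content is the same, and your write-up is clean.
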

Rieffel gave the following striking application of the stable rank one condition in K-theory.
We refer the reader to \cite{Bla} for basic facts on K-theory.
\begin{Thm}[\cite{Rie}]\label{Thm:srK}
Let $A$ be a unital C$^\ast$-algebra of stable rank one.
Let $n\in \mathbb{N}$ be given.
Then the following statements hold true.
\begin{enumerate}[\upshape (1)]
\item For two projections $p, q$ of $M_n(A)$, they are unitary equivalent in $M_n(A)$
if and only if we have $[p]_0=[q]_0$ in K$_0(A)$.
\item For two unitary elements $u, v$ of $M_n(A)$,
they sit in the same connected component of the unitary group of $M_n(A)$
if and only if
we have $[u]=[v]$ in K$_1(A)$.
\end{enumerate}
\end{Thm}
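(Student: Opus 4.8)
The two ``only if'' implications are formal and use nothing about stable rank: if $u^\ast p u=q$ for a unitary $u\in M_n(A)$ then $u$ implements a Murray--von Neumann equivalence $p\sim q$, hence $[p]_0=[q]_0$; and if $u,v\in U(M_n(A))$ lie in the same path component then in particular they are homotopic in some $U(M_N(A))$, so $[u]=[v]$ in $K_1(A)$. Thus all the content is in the two ``if'' directions, and my plan is to reduce each of them to one structural fact about stable rank one plus a short algebraic gluing argument. The structural facts I will take from Rieffel's analysis \cite{Rie} are: (a) if $A$ has stable rank one, then so does $M_n(A)$ for every $n$; and (b) a unital \Cs-algebra of stable rank one has \emph{cancellation of projections}, i.e.\ the canonical map $V(A)\to K_0(A)$ from the monoid of Murray--von Neumann classes is injective, and moreover, for every $n$, the stabilization map $U(M_n(A))/U_0(M_n(A))\to U(M_{n+1}(A))/U_0(M_{n+1}(A))$ is injective (in fact bijective), so that each $U(M_n(A))/U_0(M_n(A))$ is canonically isomorphic to $K_1(A)$.

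For part (1), suppose $[p]_0=[q]_0$ in $K_0(A)$ with $p,q\in M_n(A)$. By definition of the Grothendieck group there is a projection $r$ over $A$ with $p\oplus r\sim q\oplus r$; enlarging $r$ by a complementary projection we may take $r=1_m$, so $p\oplus 1_m\sim q\oplus 1_m$ in $M_{n+m}(A)$. Since $M_{n+m}(A)$ has stable rank one by (a), cancellation from (b) removes the summand $1_m$ and yields $p\sim q$ in $M_n(A)$. Applying $K_0$ to the complements gives $[1_n-p]_0=[1_n]_0-[p]_0=[1_n]_0-[q]_0=[1_n-q]_0$, so by injectivity of $V(M_n(A))\to K_0(A)$ we also get $1_n-p\sim 1_n-q$ in $M_n(A)$. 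Now pick partial isometries $v,w\in M_n(A)$ with $v^\ast v=p$, $vv^\ast=q$, $w^\ast w=1_n-p$, $ww^\ast=1_n-q$; then $u:=v+w$ satisfies $u^\ast u=uu^\ast=1_n$, because all four cross terms vanish by orthogonality of the relevant source and range projections, and $upu^\ast=q$. This $u$ is the desired unitary of $M_n(A)$.

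For part (2), suppose $[u]=[v]$ in $K_1(A)$ with $u,v\in U(M_n(A))$. Unwinding the direct-limit definition of $K_1$, there is $k$ for which $u\oplus 1_k$ and $v\oplus 1_k$ become homotopic in $U(M_{n+k}(A))$, that is, equal in $U(M_{n+k}(A))/U_0(M_{n+k}(A))$. Iterating the injectivity of the stabilization maps from (b) $k$ times, $u$ and $v$ are already equal in $U(M_n(A))/U_0(M_n(A))$; equivalently $uv^{-1}\in U_0(M_n(A))$, which says precisely that $u$ and $v$ lie in the same path component of $U(M_n(A))$.

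Everything above except (a) and (b) is routine bookkeeping with $K$-theory and with orthogonal partial isometries; the genuine work---and what I would regard as the main obstacle if a self-contained proof were required---is establishing (b), namely that density of the invertible elements forces cancellation of projections and the analogous stabilization property for unitary groups. These are exactly the points where one must use that the approximating elements can be taken to be invertible rather than merely norm-close, and they are precisely Rieffel's technical theorems in \cite{Rie}, which I would invoke directly.
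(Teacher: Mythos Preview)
The paper does not supply its own proof of this theorem: it is stated as a citation to Rieffel \cite{Rie} and used as a black box (see also Remark~\ref{Rem:rr0}). So there is no ``paper's proof'' to compare against. Your outline is a correct reduction of the statement to the two structural facts (a) and (b) from \cite{Rie}, and the bookkeeping with partial isometries and stabilization maps is fine. One minor clarification: in part~(1), after cancellation gives $p\sim q$ in $M_\infty(A)$, you should note explicitly that the implementing partial isometry $v$ automatically lies in $M_n(A)$ because $v=qvp$ with $p,q\in M_n(A)$; this is routine but worth saying since you emphasize that the unitary must live in $M_n(A)$.
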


This emphasizes the importance to determine when a given C$^\ast$-algebra
has stable rank one.
Note that even for commutative \Cs-algebras, these properties fail in general.
This is one of the major difficulty of the classification problem of vector bundles
up to isomorphism (even on manifolds),
in spite of the fact that $K$-groups are computable invariants.
See also Remark \ref{Rem:rr0} for applications to the real rank of certain groupoid \Cs-algebras.
\subsection{Connected components and separation by clopen subsets}\label{ss:conn}
Let $X$ be a compact Hausdorff space.
For $x\in X$,
define $\langle x \rangle$
to be the intersection of all clopen neighborhoods of $x$.
It is not hard to show that the set $\langle x \rangle$ coincides with the connected component of $X$ containing $x$.
The following lemmas are immediate consequences of the definition.
\begin{Lem}\label{Lem:conn2}
Let $x\in X$ be given.
Let $C$ be a closed subset of $X$ disjoint from $\langle x \rangle$.
Then there is a clopen subset $U \subset X$
satisfying $\langle x\rangle \subset U \subset X \setminus C$.
\end{Lem}
\begin{Lem}\label{Lem:conn1}
Let $x_1, \ldots, x_n $ be elements of $X$
satisfying $\langle x_i \rangle \neq \langle x_j \rangle$ for $i\neq j$.
Then there are pairwise disjoint clopen subsets $U_1, \ldots, U_n$ of $X$
satisfying $x_i \in U_i$ for each $i$.
\end{Lem}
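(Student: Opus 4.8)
The plan is to proceed by induction on $n$, using Lemma \ref{Lem:conn2} as the basic separation tool. The case $n=1$ is trivial (take $U_1=X$). For the inductive step, suppose the statement holds for $n-1$ points. Given $x_1,\ldots,x_n$ with pairwise distinct connected components, I would first separate $x_n$ from the others: since $\langle x_n\rangle\neq\langle x_i\rangle$ for $i<n$ and connected components are either equal or disjoint, the component $\langle x_n\rangle$ is disjoint from the closed set $\langle x_1\rangle\cup\cdots\cup\langle x_{n-1}\rangle$. Actually, to invoke Lemma \ref{Lem:conn2} directly I need a closed set disjoint from $\langle x_n\rangle$ containing the first $n-1$ points; the finite union of components $\bigcup_{i<n}\langle x_i\rangle$ is closed (each component is closed, and a finite union of closed sets is closed) and disjoint from $\langle x_n\rangle$. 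Hence Lemma \ref{Lem:conn2} yields a clopen set $V$ with $\langle x_n\rangle\subset V\subset X\setminus\bigcup_{i<n}\langle x_i\rangle$.

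Next I would pass to the clopen subspace $X\setminus V$, which is itself a compact Hausdorff space containing $x_1,\ldots,x_{n-1}$. The key observation is that the connected component of $x_i$ in $X\setminus V$ contains (in fact equals, but containment suffices) the points we need to keep separate: since $\langle x_i\rangle\subset X\setminus V$ is connected and contains $x_i$, it lies inside the component of $x_i$ within $X\setminus V$, so the components of $x_1,\ldots,x_{n-1}$ relative to $X\setminus V$ are still pairwise distinct. Applying the inductive hypothesis inside $X\setminus V$ gives pairwise disjoint clopen subsets $U_1,\ldots,U_{n-1}$ of $X\setminus V$ with $x_i\in U_i$. Since $X\setminus V$ is clopen in $X$, each $U_i$ is clopen in $X$ as well. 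Setting $U_n:=V$, the sets $U_1,\ldots,U_n$ are pairwise disjoint (the $U_i$ for $i<n$ are disjoint from $V=U_n$ by construction, and mutually disjoint by the inductive hypothesis), clopen in $X$, and contain the respective $x_i$, completing the induction.

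The only mild subtlety—hardly an obstacle—is the bookkeeping about connected components in a subspace: one must be careful that restricting to the clopen set $X\setminus V$ does not merge previously distinct components, which is exactly what the containment $\langle x_i\rangle\subset X\setminus V$ guarantees, since a connected set is contained in a single component. Everything else is a routine application of the fact that finite unions of closed sets are closed, that clopen subsets of clopen subspaces remain clopen, and that $\langle x\rangle$ is the connected component of $x$, as noted just before the lemma.
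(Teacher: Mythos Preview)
Your proof is correct. The paper does not give an explicit proof, stating only that the lemma is an immediate consequence of the definition of $\langle x\rangle$; your inductive argument via Lemma~\ref{Lem:conn2} is a natural way to make this precise.
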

\section{Almost finite groupoids}\label{Sec:af}
In this section, we introduce a notion of almost finiteness for general (\'etale) groupoids. This notion is originally introduced for totally disconnected groupoids by Matui \cite{Mat12}.
We then establish basic properties of almost finite groupoids which are used in Section \ref{Sec:Main}.

We first introduce a few terminologies needed to define almost finiteness.
\begin{Def}
Let $G$ be a groupoid.
Let $C$ be a compact subset of $G$ and let $\epsilon>0$.
Let $K$ be a compact subgroupoid of $G$ containing the unit space $G^{(0)}$.
We say that $K$ is $(C, \epsilon)$-invariant
if the following inequality holds for all $s\in G^{(0)}$.
\[\frac{\sharp(CKs \setminus Ks)}{\sharp(Ks)}< \epsilon.\]
\end{Def}
\begin{Def}\label{Def:fd}
Let $K$ be a compact groupoid.
We say that a clopen subset $F$ of $K^{(0)}$ is a fundamental domain of $K$
if there is a partition $K^{(0)}=\bigsqcup_{i=1}^n \bigsqcup_{j=1}^{N_i} F^{(i)}_{j}$
of $K^{(0)}$ by clopen subsets
satisfying the following conditions.
\begin{enumerate}[\upshape (1)]
\item $F=\bigsqcup_{i=1}^n F^{(i)}_1$.
\item For each $i\in\{1, \ldots, n\}$ and $j, k\in \{1, \ldots, N_i\}$,
there is a unique compact open $K$-set $V^{(i)}_{j, k}$
satisfying $r(V^{(i)}_{j, k})=F^{(i)}_{j}$ and $s(V^{(i)}_{j, k})=F^{(i)}_{k}$.
\item $K=\bigsqcup_{i=1}^n\bigsqcup_{1 \leq j, k \leq N_i} V^{(i)}_{j, k}$.
\end{enumerate}
\end{Def}
\begin{Rem}\label{Rem:fd}
The groupoid $K$ appearing in Definition \ref{Def:fd}
is isomorphic to the groupoid $\coprod_{i=1}^n F^{(i)}_1\times (\mathbb{Z}_{N_i}\rtimes \mathbb{Z}_{N_i})$.
Here for each $i$, the group $\mathbb{Z}_{N_i}$ acts on itselt by the left translations.
From this description, it is easy to show that the analogue of Lemma 6.1 in \cite{Mat12}
is valid for compact groupoids admitting a fundamental domain.
A groupoid of this form is said to be elementary in \cite{Ren} (III. Definition 1.1).
From this particular form, it is not hard to conclude that a clopen subset $U$ of $K^{(0)}$
contains a fundamental domain if and only if it satisfies $\mu(U)>0$ for all $\mu \in M(K)$.
\end{Rem}
\begin{Def}\label{Def:ele}
Let $G$ be a groupoid.
We say that a subgroupoid $K$ of $G$ is elementary
if $K$ contains the unit space $G^{(0)}$ and
admits a fundamental domain.
\end{Def}
\begin{Rem}
In \cite{GPS04}, the authors introduced the notion of compact \`etale equivalence relation.
It follows from Lemma 3.4 of \cite{GPS04} that
our notion of an elementary subgroupoid
coincides with the notion of a compact \`etale equivalence subrelation in \cite{GPS04}.
\end{Rem}

We now introduce the definition of almost finiteness for general \'etale groupoids.
\begin{Def}[cf.~Definition 6.2 of \cite{Mat12}]
We say that a groupoid $G$ is almost finite if it satisfies the following conditions.
\begin{enumerate}[\upshape (1)]
\item
The union of all compact open $G$-sets covers $G$.
\item For any compact subset $C \subset G$ and $\epsilon>0$,
there is a $(C, \epsilon)$-invariant elementary subgroupoid $K$ of $G$.
\end{enumerate}
\end{Def}

We remark that, by Lemma 4.7 of \cite{Mat12}, for totally disconnected groupoids,
our definition of almost finiteness coincides with Matui's original one.

\begin{Rem}
For any group action on a compact space,
it is easy to see that the almost finiteness of the transformation groupoid implies the amenability of the acting group (cf.~ Lemma \ref{Lem:cas}).
For general groupoids, it is not known if almost finiteness implies amenability.
Also, it can be shown that almost finiteness implies the existence of
an invariant probability measure. (See Lemma 6.5 in \cite{Mat12} and Lemma \ref{Lem:prob2}.)
To the author's knowledge, this is the only known
obstruction of almost finiteness for amenable (minimal) principal groupoids.
\end{Rem}
We first study a connection between invariant probability measures of almost finite groupoids
and that of almost invariant elementary subgroupoids.

By modifying the proof of Lemma 6.5 in \cite{Mat12} in a straightforward way,
one can show the following lemma.
Here we omit the proof. See \cite{Mat12} for the details.

\begin{Lem}[cf.~Lemma 6.5 in \cite{Mat12}]\label{Lem:prob1}
Let $G$ be a groupoid.
Let $V$ be a compact $G$-set and let $\epsilon>0$.
Let $K$ be a $(V\cup V^{-1}, \epsilon)$-invariant elementary subgroupoid of $G$.
Then for any $\mu \in M(K)$, we have
\[|\mu(r(V\setminus K))|< \epsilon {\rm\ and\ } |\mu(s(V\setminus K))|<\epsilon.\]
In particular, for any Borel subset $A$ of $s(V)$, we have
\[|\mu(A)-\mu(\tau_V(A))|<2\epsilon.\]
\end{Lem}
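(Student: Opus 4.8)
The plan is to deduce the statement from the defining inequality of $(V\cup V^{-1},\epsilon)$-invariance together with the structure of $K$-invariant measures coming from the fundamental domain. First I would fix $\mu\in M(K)$ and unwind what $\mu$ being $K$-invariant means on the unit space: since $K$ is elementary, Remark~\ref{Rem:fd} identifies $K$ with a disjoint union $\coprod_{i}F^{(i)}_1\times(\mathbb{Z}_{N_i}\rtimes\mathbb{Z}_{N_i})$, so $M(K)$ is precisely the set of probability measures that are equidistributed across the clopen pieces $F^{(i)}_1,\dots,F^{(i)}_{N_i}$ within each index $i$; in particular $\mu$ assigns the same mass to each $F^{(i)}_j$. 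I would then reinterpret the quantity $\sharp(CKs\setminus Ks)/\sharp(Ks)$, with $C=V\cup V^{-1}$, by integrating over $G^{(0)}$ against $\mu$: the function $s\mapsto \sharp(Ks)$ is constant on each $F^{(i)}_j$ (equal to $N_i$), and $s\mapsto\sharp((V\cup V^{-1})Ks\setminus Ks)$ counts exactly those unit points $s$ at which the $G$-set $V$ (or $V^{-1}$) moves the $K$-orbit of $s$ outside $K$.

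The key step is to rewrite $\mu(r(V\setminus K))$ and $\mu(s(V\setminus K))$ in terms of that counting function. Because $V$ is a compact $G$-set, $s|_V$ and $r|_V$ are injective, so $r(V\setminus K)$ and $s(V\setminus K)$ are clopen subsets of $G^{(0)}$, and a unit point $t$ lies in $s(V\setminus K)$ exactly when the unique element of $V$ with source $t$ fails to lie in $K$ — equivalently, when that element contributes to $CKt\setminus Kt$ via $V$. A careful bookkeeping over the $K$-orbit of a point (every element of $Ks$ has the form $ks'$ for $k\in K$, $s'$ in the same $F^{(i)}_j$-structure), using that $\mu$ is equidistributed over the $N_i$ pieces, should yield the averaging identity
\[
\mu\big(r(V\setminus K)\big)+\mu\big(s(V\setminus K)\big)
\;\le\; \int_{G^{(0)}}\frac{\sharp\big((V\cup V^{-1})Ks\setminus Ks\big)}{\sharp(Ks)}\,d\mu(s),
\]
and the right-hand side is $<\epsilon$ by the $(V\cup V^{-1},\epsilon)$-invariance hypothesis, pointwise. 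Since each of $\mu(r(V\setminus K))$ and $\mu(s(V\setminus K))$ is nonnegative, both are $<\epsilon$, which is the first assertion. (I would double-check whether the clean bound is on the sum or on each term separately; in the worst case one gets each $<\epsilon$ directly because $V$ and $V^{-1}$ separately contribute nonnegatively, and this is all that is needed.)

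For the last assertion, given a Borel set $A\subset s(V)$ I would split $A=(A\cap s(V\cap K))\sqcup(A\cap s(V\setminus K))$. On $A\cap s(V\cap K)$ the homeomorphism $\tau_V$ agrees with $\tau_{V\cap K}$, which is implemented by a compact open $K$-set, hence is $\mu$-preserving because $\mu\in M(K)$; so that part contributes nothing to $\mu(A)-\mu(\tau_V(A))$. The remaining discrepancy is controlled by $\mu(A\cap s(V\setminus K))\le\mu(s(V\setminus K))<\epsilon$ and $\mu(\tau_V(A\cap s(V\setminus K)))\le\mu(r(V\setminus K))<\epsilon$, giving $|\mu(A)-\mu(\tau_V(A))|<2\epsilon$ by the triangle inequality. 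The main obstacle I anticipate is the combinatorial accounting in the displayed inequality: one has to match, orbit by orbit, the unit points counted by $\sharp((V\cup V^{-1})Ks\setminus Ks)$ with the measure-theoretic sets $r(V\setminus K)$ and $s(V\setminus K)$, and get the normalization by $\sharp(Ks)$ to cancel correctly against the equidistribution of $\mu$ over the fundamental-domain pieces. Everything else is formal, and indeed the excerpt itself says this is a straightforward modification of Lemma 6.5 of \cite{Mat12}.
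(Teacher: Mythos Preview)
Your approach is correct and is the standard one; the paper itself omits the proof entirely, deferring to Matui's Lemma~6.5, and what you outline is precisely that argument: rewrite $\mu(s(V\setminus K))$ as the $\mu$-integral of the $K$-orbit average of $\chi_{s(V\setminus K)}$, identify that average with $\sharp(VKs\setminus Ks)/\sharp(Ks)$, and bound pointwise by the invariance hypothesis. The deduction of the $2\epsilon$ estimate via the splitting $A=(A\cap s(V\cap K))\sqcup(A\cap s(V\setminus K))$ is exactly right.

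Two small corrections. First, your displayed inequality is in the wrong direction: since $(VKs\setminus Ks)$ and $(V^{-1}Ks\setminus Ks)$ need not be disjoint, one only gets
\[
\mu\big(r(V\setminus K)\big)+\mu\big(s(V\setminus K)\big)\;\ge\;\int_{G^{(0)}}\frac{\sharp\big((V\cup V^{-1})Ks\setminus Ks\big)}{\sharp(Ks)}\,d\mu(s),
\]
which is useless. Your own hedge is the fix: bound each term separately, using $VKs\setminus Ks\subset (V\cup V^{-1})Ks\setminus Ks$ for $\mu(s(V\setminus K))$ and the analogous inclusion with $V^{-1}$ for $\mu(r(V\setminus K))$. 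Second, $r(V\setminus K)$ and $s(V\setminus K)$ are closed (as $K$ is clopen in $G$ and $V$ is compact) but not clopen in general, since $V$ is only assumed compact; this is harmless for the argument.
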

\begin{Lem}[cf.~Lemma 6.5 of \cite{Mat12}]\label{Lem:prob2}
Let $G$ be an almost finite groupoid.
Let $(C_\lambda)_{\lambda\in \Lambda}$ be an increasing net
of compact subsets of $G$ satisfying
$\bigcup_{\lambda \in \Lambda} \inte(C_\lambda)=G$.
Let $(\epsilon_\lambda)_{\lambda\in \Lambda}$ be
a net of positive numbers decreasing to $0$.
For each $\lambda \in \Lambda$, let
a $(C_\lambda, \epsilon_\lambda)$-invariant elementary subgroupoid $K_\lambda$ of $G$
and $\mu_\lambda \in M(K_\lambda)$ be given.
Then any weak-$\ast$ cluster point of the net $(\mu_\lambda)_{\lambda \in \Lambda}$
is $G$-invariant.
In particular $M(G)$ is nonempty.
\end{Lem}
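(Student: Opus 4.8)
The plan is to combine Lemma \ref{Lem:prob1} with a standard compactness/diagonal argument on the weak-$\ast$ topology. First I would fix a weak-$\ast$ cluster point $\mu$ of the net $(\mu_\lambda)_{\lambda\in\Lambda}$, so that we have a subnet $(\mu_{\lambda_i})_i$ converging weak-$\ast$ to $\mu$ in the state space of $C(G^{(0)})$; since the state space is weak-$\ast$ compact, $\mu$ is itself a probability regular Borel measure on $G^{(0)}$. It remains to show $\mu\in M(G)$, i.e.\ that $\mu(f)=\mu(f\circ\tau_V)$ for every compact $G$-set $V$ and every $f\in C(G^{(0)})$ supported on $r(V)$ (using the Riesz-uniqueness reformulation of $G$-invariance recorded in Section \ref{SubSec:Groupoids}). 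By replacing $V$ with $V\cup V^{-1}$-type considerations it suffices to handle a fixed compact $G$-set $V$.

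Next I would invoke the hypothesis $\bigcup_{\lambda}\inte(C_\lambda)=G$: the compact set $V\cup V^{-1}$ is covered by the open sets $\inte(C_\lambda)$, and since $(C_\lambda)$ is increasing, $V\cup V^{-1}\subset C_{\lambda_0}\subset C_\lambda$ for all $\lambda\ge\lambda_0$. Hence for all sufficiently large $\lambda$, the subgroupoid $K_\lambda$ is $(V\cup V^{-1},\epsilon_\lambda)$-invariant, so Lemma \ref{Lem:prob1} applies and gives, for every Borel $A\subset s(V)$,
\[
|\mu_\lambda(A)-\mu_\lambda(\tau_V(A))|<2\epsilon_\lambda.
\]
I would then upgrade this from Borel sets to continuous functions: for $f\in C(G^{(0)})$ supported on $r(V)$, approximating (or directly, by the same argument in Lemma \ref{Lem:prob1} phrased for $f\circ\tau_V$) one obtains $|\mu_\lambda(f)-\mu_\lambda(f\circ\tau_V)|< C_f\,\epsilon_\lambda$ with a constant $C_f$ proportional to $\|f\|_\infty$. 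Passing to the subnet converging to $\mu$ and using $\epsilon_\lambda\to 0$ together with weak-$\ast$ convergence $\mu_{\lambda_i}(g)\to\mu(g)$ for the two fixed continuous functions $g=f$ and $g=f\circ\tau_V$, we conclude $\mu(f)=\mu(f\circ\tau_V)$. As $V$ and $f$ were arbitrary, $\mu$ is $G$-invariant. The final clause ``$M(G)$ is nonempty'' follows since almost finiteness guarantees, for a cofinal net of pairs $(C_\lambda,\epsilon_\lambda)$, the existence of $(C_\lambda,\epsilon_\lambda)$-invariant elementary $K_\lambda$, each of which admits an invariant measure by Remark \ref{Rem:fd} (the elementary/compact case), so the net $(\mu_\lambda)$ exists and its cluster point lies in $M(G)$ by the first part.

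The main technical point to get right is the passage from the Borel estimate in Lemma \ref{Lem:prob1} to an estimate on $\mu_\lambda(f)$ that is uniform enough to survive the weak-$\ast$ limit; one must be careful that the error constant depends only on $\|f\|_\infty$ (and not on $\lambda$ or on a choice of approximating simple functions), which is why it is cleanest to re-run the argument of Lemma \ref{Lem:prob1} directly for continuous $f$ rather than to approximate $\chi$-functions. The rest is routine manipulation of nets and the weak-$\ast$ topology.
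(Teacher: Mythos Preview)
Your proposal is correct and follows essentially the same route as the paper: reduce $G$-invariance to the equality $\mu(f)=\mu(f\circ\tau_V)$, use the covering hypothesis to get $V\cup V^{-1}\subset C_\lambda$ for large $\lambda$, apply Lemma~\ref{Lem:prob1} to $\mu_\lambda$, and pass to the weak-$\ast$ limit. The only cosmetic difference is in the ``upgrade'' step: the paper fixes $\epsilon>0$, approximates $f$ by a simple function $\sum_i z_i\chi_{A_i}$, and then chooses $\lambda$ large enough that all three error terms (two from weak-$\ast$ convergence, one from $\sum_i|z_i|\,|\mu_\lambda(A_i)-\mu_\lambda(\tau_V^{-1}(A_i))|$) are below $\epsilon$, arriving at a $5\epsilon$ bound; your suggestion to re-run the Lemma~\ref{Lem:prob1} argument directly for continuous $f$ (splitting over $r(V\cap K_\lambda)$ and $r(V\setminus K_\lambda)$) gives the cleaner estimate $|\mu_\lambda(f)-\mu_\lambda(f\circ\tau_V)|\le 2\|f\|_\infty\epsilon_\lambda$ and avoids the dependence on the number of pieces in the partition.
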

\begin{proof}
Let $\mu$ be a weak-$\ast$ cluster point of the net $(\mu_\lambda)_{\lambda \in \Lambda}$.
It suffices to show the equality $\mu(f)=\mu(f\circ \tau_V)$
for any $f\in C(G^{(0)})$ and
any compact $G$-set $V$ satisfying $\supp(f)\subset r(V)$.

Given such $f$ and $V$.
Let $\epsilon>0$ be given.
Take a partition $\supp(f)=\bigsqcup_{i=1}^n A_i$
of $\supp(f)$ by Borel sets and a sequence $z_1, \ldots, z_n \in \mathbb{C}$
satisfying $|f(s)-z_i|<\epsilon$ for all $s\in A_i$ and $i$.
For each $i$, put $B_i:=\tau_V^{-1}(A_i)$.
Then, by Lemma \ref{Lem:prob1}, one can find $\lambda \in \Lambda$
satisfying the following conditions.
\begin{enumerate}[\upshape (1)]
\item $|\mu(f)-\mu_\lambda(f)|< \epsilon.$
\item $|\mu(f\circ\tau_V)- \mu_\lambda(f\circ \tau_V)|<\epsilon.$
\item $\sum_{i=1}^n |z_i||\mu_\lambda(A_i)- \mu_\lambda(B_i)|<\epsilon$.
\end{enumerate}
By the choices of the partition $(A_i)_{i=1}^n$ and the sequence $(z_i)_{i=1}^n$, we have
\[\left|\mu_\lambda(f)-\sum_{i=1}^n z_i\mu_\lambda(A_i)\right|<\epsilon.\]
Since $\supp(f\circ \tau_V)=\tau_V^{-1}(\supp(f))$,
we have
\[\supp(f\circ \tau_V)=\bigsqcup_{i=1}^n B_i.\]
Moreover, for any $i\in \{1, \ldots, n\}$ and any $s\in B_i$,
we have $|(f\circ \tau_V)(s)-z_i|<\epsilon$.
This yields
\[\left|\mu_\lambda(f\circ \tau_V)-\sum_{i=1}^n z_i\mu_\lambda(B_i)\right|<\epsilon.\]
By combining these inequalities, we obtain
\begin{eqnarray*}
|\mu(f)-\mu(f\circ\tau_V)|&\leq& |\mu(f)-\mu_\lambda(f)|+|\mu_\lambda(f)-\mu_\lambda(f\circ\tau_V)|+|\mu(f\circ \tau_V)-\mu_\lambda(f\circ \tau_V)|\\
&<&4\epsilon +\sum_{i=1}^n |z_i||\mu_\lambda(A_i)-\mu_\lambda(B_i)|\\
&<&5\epsilon.\end{eqnarray*}
Since $\epsilon>0$ is arbitrary, we have
$\mu(f)=\mu(f\circ\tau_V)$ as desired.
\end{proof}
\begin{Lem}[cf.~Remark 6.6 in \cite{Mat12}]\label{Lem:ep}
Any minimal almost finite groupoid is essentially principal.
\end{Lem}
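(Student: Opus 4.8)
The plan is to show that for a minimal almost finite groupoid $G$, the interior of the isotropy set $\{g\in G\setminus G^{(0)} : r(g)=s(g)\}$ is empty; since $G$ is minimal, any nonempty open $G$-invariant subset of $G^{(0)}$ must be all of $G^{(0)}$, so it suffices to rule out such interior points locally and then propagate a contradiction by minimality.

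First I would argue by contradiction: suppose there is a nonempty open subset $W$ of $G\setminus G^{(0)}$ consisting entirely of isotropy, i.e. $r(g)=s(g)$ for all $g\in W$. Shrinking $W$ using condition (1) in the definition of almost finiteness, I may assume $W$ is a compact open $G$-set contained in $G\setminus G^{(0)}$, and then $\tau_W$ is a homeomorphism of the clopen set $U:=s(W)=r(W)$ that agrees with the identity (because $r(g)=s(g)$ for $g\in W$) — this already forces $W\subset G^{(0)}$, which is absurd unless I am more careful. The correct formulation is: the bad set to exclude is points $g_0\in G\setminus G^{(0)}$ with $r(g_0)=s(g_0)=:x_0$ that lie in the interior; so I pick a compact open $G$-set $V$ with $g_0\in V$, $g_0\notin G^{(0)}$, and (using that $G^{(0)}$ is clopen and $G$ is Hausdorff) arrange $V\cap G^{(0)}=\emptyset$ while $x_0\in s(V)\cap r(V)$ with $\tau_V(x_0)=x_0$.

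Now comes the main point, where almost finiteness enters. Apply condition (2) with $C=V\cup V^{-1}$ and a small $\epsilon>0$ to obtain a $(C,\epsilon)$-invariant elementary subgroupoid $K$. By Lemma~\ref{Lem:prob2} (applied to a suitable net, or just by the remark that $M(K)\neq\emptyset$ since $K$ is elementary, together with Lemma~\ref{Lem:prob1}) there is $\mu\in M(K)$, and Lemma~\ref{Lem:prob1} gives $|\mu(A)-\mu(\tau_V(A))|<2\epsilon$ for every Borel $A\subset s(V)$. The strategy is to choose a small clopen neighbourhood $A$ of $x_0$ inside $s(V)$ on which $\tau_V$ is \emph{fixed-point free except at points very close to $x_0$} — more precisely, using that $g_0\notin G^{(0)}$ and $G^{(0)}$ is clopen, the $G$-set $V$ can be chosen so that $\tau_V(y)\neq y$ for all $y$ in a clopen neighbourhood of $x_0$, except possibly for $y$ in the fixed-point set, which by essential-principality-type considerations one wants to be nowhere dense. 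This circularity is exactly the obstacle, so instead I would follow Matui's Remark 6.6 route: take $g_0$ in the interior of the isotropy, get a compact open $G$-set $V\subset G\setminus G^{(0)}$ with $\tau_V=\mathrm{id}$ on a nonempty clopen $A\subset G^{(0)}$; then the elements $g\in V$ over $A$, together with $G^{(0)}$, would be "too many" relative to a $(V\cup V^{-1},\epsilon)$-invariant elementary $K$: concretely, $VKy$ would exceed $Ky$ in cardinality along $A$ in a way incompatible with $(V\cup V^{-1},\epsilon)$-invariance for small $\epsilon$, because $V$ restricted over $A$ gives genuinely new groupoid elements not in $K$ (they are not units, and $K$ is elementary hence its isotropy over $A$ is controlled by the fundamental domain structure in Remark~\ref{Rem:fd}). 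Pushing this counting through, letting $\epsilon\to 0$ along the net of Lemma~\ref{Lem:prob2}, yields the contradiction.

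The hard part will be making the cardinality estimate rigorous: I must show that the extra isotropy element supplied by $V$ over $A$ really enlarges $Ks$ for a definite proportion of $s\in G^{(0)}$, which requires understanding how $K$'s fundamental-domain decomposition (Definition~\ref{Def:fd}, Remark~\ref{Rem:fd}) interacts with the clopen set $A$ and the $G$-set $V$; one cannot a priori exclude that $V$ over $A$ already lies inside $K$. The clean way around this is to first replace $G$ by $G_U$ for a tiny clopen $U\ni x_0$ so that the relevant piece of $V$ is disjoint from any elementary subgroupoid's $G$-sets over $U$ of bounded "size", or — more in the spirit of the paper — to invoke that an elementary groupoid has the property (Remark~\ref{Rem:fd}, via the analogue of Lemma 6.1 of \cite{Mat12}) that its isotropy group bundle is locally constant with finite fibres, and then observe that $\tau_V=\mathrm{id}$ on $A$ would, after intersecting with $K$ over a fundamental-domain piece, force $V$ to coincide with a genuine $K$-set, contradicting $V\cap G^{(0)}=\emptyset$ once $U$ is chosen small enough that this $K$-set must be contained in the units. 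I expect the write-up to be short once this local reduction is set up correctly; the essential content is Matui's Remark 6.6 argument transported verbatim, the only new ingredient being that $G^{(0)}$ is merely clopen rather than a basis of clopens, which the local reduction to $G_U$ handles.
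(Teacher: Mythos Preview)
Your overall strategy (contradiction, extract a compact open $G$-set $V\subset G\setminus G^{(0)}$ consisting of isotropy, apply almost finiteness) matches the paper's, but you are missing the one observation that collapses the entire argument to five lines, and as a result you invent a ``hard part'' that does not exist.

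The point you are circling around but never state is simply this: \emph{every elementary subgroupoid is principal}. This is immediate from the structure in Definition~\ref{Def:fd} and Remark~\ref{Rem:fd} (the groupoid is a disjoint union of pieces of the form $F\times(\mathbb{Z}_N\rtimes\mathbb{Z}_N)$, which have trivial isotropy). Consequently, since your $V$ consists entirely of non-unit isotropy elements, you get $K\cap V=\emptyset$ for free, i.e. $V\setminus K=V$. Your worry that ``one cannot a priori exclude that $V$ over $A$ already lies inside $K$'' is therefore unfounded, and the local reduction to $G_U$, the cardinality estimates, and the discussion of $\tau_V$ being fixed-point free are all unnecessary.

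With $V\setminus K=V$ in hand, the paper's proof is immediate: fix $\mu\in M(G)$ (nonempty by Lemma~\ref{Lem:prob2}); for any $\epsilon>0$ choose a $(V\cup V^{-1},\epsilon)$-invariant elementary $K$; since $M(G)\subset M(K)$, Lemma~\ref{Lem:prob1} gives $\mu(s(V))=\mu(s(V\setminus K))<\epsilon$; letting $\epsilon\to 0$ yields $\mu(s(V))=0$, contradicting minimality (as $s(V)$ has nonempty interior and $\mu$ has full support). That is the entire proof.
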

\begin{proof}
To lead to a contradiction, suppose we have a minimal almost finite groupoid $G$
which is not essentially principal.
Take a compact $G$-set $V\subset G\setminus G^{(0)}$
satisfying $\inte(V)\neq \emptyset$ and $r(g)=s(g)$ for all $g\in V$.
Take $\mu \in M(G)$.
For any $\epsilon>0$, take a $(V\cup V^{-1}, \epsilon)$-invariant
elementary subgroupoid $K$ of $G$.
Then, since $K$ is principal,
we have $K\cap V=\emptyset$.
This equality and Lemma \ref{Lem:prob1} yield $\mu(s(V))<\epsilon$.
Since $\epsilon>0$ is arbitrary, we have
$\mu(s(V))=0$. This contradicts the minimality of $G$.
\end{proof}
\begin{Lem}\label{Lem:Gsets}
Let $G$ be an almost finite groupoid.
Let $U$ be an open $G$-full set.
Then there is an elementary subgroupoid $K$ of $G$
which makes $U$ a $K$-full set.
\end{Lem}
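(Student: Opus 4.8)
The plan is to produce $K$ directly from the almost finiteness hypothesis, applied to a finite family of compact $G$-sets that already witnesses the $G$-fullness of $U$, and then to certify that $U$ is $K$-full by testing against the finitely supported $K$-invariant measures carried by the individual (finite) orbits of $K$, using Lemma \ref{Lem:prob1}. The point to bear in mind is that $G$ is \emph{not} assumed minimal, so one cannot simply argue that $G^{(0)}\setminus r(KU)$ is a closed $G$-invariant set and invoke minimality; this is where a more local argument, via orbit measures, is needed.

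First I would invoke the remark at the end of Section \ref{SubSec:Groupoids}: since $U$ is open and $G$-full and $G^{(0)}$ is compact, there are compact $G$-sets $V_1,\dots,V_n$ with $\bigcup_{i=1}^{n}\tau_{V_i}(U)=G^{(0)}$. Set $C:=\bigcup_{i=1}^n(V_i\cup V_i^{-1})$, a compact subset of $G$, and fix $\epsilon$ with $0<\epsilon<\tfrac{1}{2n}$. By almost finiteness there is a $(C,\epsilon)$-invariant elementary subgroupoid $K$ of $G$; since $C$ contains $V_i\cup V_i^{-1}$ for each $i$, this $K$ is in particular $(V_i\cup V_i^{-1},\epsilon)$-invariant, so Lemma \ref{Lem:prob1} becomes available for every $V_i$. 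I claim that this $K$ works, i.e.\ that $r(KU)=G^{(0)}$.

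To prove the claim I would argue by contradiction. Suppose there is $x\in G^{(0)}\setminus r(KU)$, and let $[x]_K:=\{r(k):k\in K,\,s(k)=x\}$ denote its $K$-orbit, which is finite because $K$ is elementary (Remark \ref{Rem:fd}). The normalized counting measure $\mu_x$ on $[x]_K$ is then a regular Borel probability measure on $G^{(0)}$, and it lies in $M(K)$: for every compact $K$-set $W$, the homeomorphism $\tau_W$ restricts to a bijection of $[x]_K\cap s(W)$ onto $[x]_K\cap r(W)$ (both sets being subsets of the single orbit $[x]_K$) and hence preserves $\mu_x$. Moreover no point of $[x]_K$ lies in $U$, for such a point would yield an arrow $k\in K$ with $s(k)\in U$ and $r(k)=x$, forcing $x\in r(KU)$; thus $\mu_x(U)=0$. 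Applying Lemma \ref{Lem:prob1} to $V_i$ with the Borel set $A_i:=U\cap s(V_i)$ (so that $\tau_{V_i}(A_i)=\tau_{V_i}(U)$ in the convention of Section \ref{SubSec:Groupoids}) gives
\[
\mu_x\bigl(\tau_{V_i}(U)\bigr)<\mu_x(A_i)+2\epsilon\le\mu_x(U)+2\epsilon=2\epsilon,
\]
and summing over $i$ produces $1=\mu_x(G^{(0)})\le\sum_{i=1}^{n}\mu_x\bigl(\tau_{V_i}(U)\bigr)<2n\epsilon<1$, a contradiction. Hence $r(KU)=G^{(0)}$, that is, $U$ is $K$-full.

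I expect the main obstacle to be conceptual rather than technical: realizing that the absence of a minimality assumption rules out the naive topological argument, and that the correct substitute is to probe $r(KU)$ with the orbit measures $\mu_x$. Once that is understood, the remaining points---checking $\mu_x\in M(K)$ from the elementary structure of $K$ (Definition \ref{Def:fd} and Remark \ref{Rem:fd}), and keeping the convention $\tau_{V_i}(U)=\tau_{V_i}(U\cap s(V_i))$ straight so that Lemma \ref{Lem:prob1} applies verbatim---are routine.
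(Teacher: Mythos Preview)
Your proof is correct and takes a genuinely different, more constructive route than the paper. The paper argues by contradiction at the level of the \emph{existence} of $K$: assuming no elementary subgroupoid makes $U$ full, it picks for each $\lambda$ in a suitable net an elementary subgroupoid $K_\lambda$ and a measure $\mu_\lambda\in M(K_\lambda)$ with $\mu_\lambda(U)=0$, then invokes Lemma~\ref{Lem:prob2} to pass to a weak-$\ast$ cluster point $\mu\in M(G)$ supported on $G^{(0)}\setminus U$, contradicting $G$-fullness. You instead exhibit a specific $K$ up front---chosen $(C,\epsilon)$-invariant for $C=\bigcup_i(V_i\cup V_i^{-1})$ and $\epsilon<\tfrac{1}{2n}$---and verify $K$-fullness of $U$ pointwise via the finitely supported orbit measures $\mu_x$ together with Lemma~\ref{Lem:prob1}.

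What each approach buys: the paper's argument is soft and reuses the machinery of Lemma~\ref{Lem:prob2} already in place, at the cost of a net/compactness step and a nonconstructive conclusion. Your argument is more elementary (it bypasses Lemma~\ref{Lem:prob2} entirely), and it yields an explicit, quantitative choice of $K$ in terms of the witnessing $G$-sets $V_1,\dots,V_n$; it also makes transparent why minimality of $G$ is unnecessary, since the obstruction is detected locally by a single orbit measure rather than by a global $G$-invariant measure.
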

\begin{proof}
To lead to a contradiction, assume that such a $K$ does not exist
for a $G$-full open set $U$.
We observe that for any elementary subgroupoid $K$ of $G$,
$U$ is $K$-full if and only if it satisfies $\mu(U)>0$ for all $\mu \in M(K)$.
Take nets $(C_\lambda)_{\lambda\in \Lambda}$, $(\epsilon_\lambda)_{\lambda\in \Lambda}$, and
$(K_\lambda)_{\lambda\in \Lambda}$ as in the statement of Lemma \ref{Lem:prob2}.
Then by assumption, for each $\lambda \in \Lambda$,
one can find $\mu_\lambda \in M(K_\lambda)$ satisfying $\mu_\lambda(U)=0$.
Let $\mu$ be a weak-$\ast$ cluster point of the net $(\mu_\lambda)_{\lambda \in \Lambda}$.
By Lemma \ref{Lem:prob2}, we have $\mu \in M(G)$.
Since each $\mu_\lambda$ is supported on $G^{(0)}\setminus U$,
so $\mu$ is. This contradicts the $G$-fullness of $U$.
\end{proof}
\begin{Lem}\label{Lem:full}
Let $G$ be an almost finite groupoid.
Let $U$ be a $G$-full clopen subset of $G^{(0)}$.
Then the restriction groupoid $G_U$ is almost finite.
\end{Lem}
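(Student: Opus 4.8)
The goal is to verify the two defining conditions of almost finiteness for $G_U$, where $U$ is a $G$-full clopen subset of $G^{(0)}$. The first condition --- that compact open $G_U$-sets cover $G_U$ --- is immediate: if $V$ is a compact open $G$-set then $V \cap G_U = V \cap r^{-1}(U) \cap s^{-1}(U)$ is a compact open $G_U$-set (recall $U$ is clopen, so $r^{-1}(U)$ and $s^{-1}(U)$ are clopen), and these cover $G_U$ by the corresponding property of $G$. So the real content is the second condition: given a compact $C \subset G_U$ and $\epsilon > 0$, produce a $(C,\epsilon)$-invariant elementary subgroupoid of $G_U$.

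**Main strategy.** The idea is to start with an elementary subgroupoid $K$ of $G$ that is $(\widetilde{C}, \delta)$-invariant for a suitably enlarged compact set $\widetilde{C} \supset C$ and small $\delta$, and then cut it down to $G_U$ by an averaging/pigeonhole argument on the pieces of a fundamental domain. Concretely, write the fundamental domain data for $K$ as a partition $K^{(0)} = \bigsqcup_{i=1}^n \bigsqcup_{j=1}^{N_i} F^{(i)}_j$ with the compact open $K$-sets $V^{(i)}_{j,k}$ as in Definition \ref{Def:fd}. Using Lemma \ref{Lem:Gsets}, one first arranges (by enlarging the compact set and shrinking $\delta$, applying Lemma \ref{Lem:prob1} and Lemma \ref{Lem:prob2}) that $U$ is $K$-full, i.e.\ $\mu(U) > 0$ for every $\mu \in M(K)$. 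By the last sentence of Remark \ref{Rem:fd}, each of the ``block'' groupoids $K^{(i)} \cong F^{(i)}_1 \times (\mathbb{Z}_{N_i} \rtimes \mathbb{Z}_{N_i})$ then has the property that $U$ meets each orbit; one chooses within each block a single index $j_0 = j_0(i)$ and the clopen set $W^{(i)} := \tau_{V^{(i)}_{j_0,\,\cdot}}(\,\cdot\,)$-translates of $U \cap F^{(i)}_{j_0}$, producing a new smaller elementary subgroupoid $K'$ whose unit space is contained in $U$. The point of the $(\widetilde{C},\delta)$-invariance of $K$ is that, up to an error controlled by $\delta$, the orbits $K's$ for $s \in U$ are ``most of'' the orbits $Ks$ intersected with $U$, so that $C$-invariance of $K$ transfers to $C$-invariance of $K'$ inside $G_U$; the ratio $\sharp(C K' s \setminus K' s)/\sharp(K' s)$ is bounded using that $C \subset G_U$ acts within the $U$-part of each orbit and that $K$ was invariant under a compact set large enough to absorb $C$ together with the finitely many $G$-sets implementing the fundamental-domain structure.

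**The hard part.** The delicate step is the bookkeeping that turns $(\widetilde{C},\delta)$-invariance of $K$ into $(C,\epsilon)$-invariance of the restricted groupoid $K'$: one must choose $\widetilde{C}$ to contain not only $C$ but also the $K$-sets $V^{(i)}_{j,k}$ (or rather $G$-sets implementing them) and their products with $C$, so that moving from a base point of $F^{(i)}_{j_0}$ out to an arbitrary point of the orbit and back does not destroy the invariance estimate, and then verify uniformly over $s \in U = (G_U)^{(0)}$ that $\sharp((G_U)$-orbit pieces$)$ behave proportionally. A clean way to organize this is to prove a lemma (in the spirit of Lemma 6.1 of \cite{Mat12}, cf.\ Remark \ref{Rem:fd}): for an elementary $K$ and a $K$-full clopen $W \subset K^{(0)}$, the restriction $K_W$ is again elementary, and there is an explicit comparison between $\sharp(Ks)$ and $\sharp(K_W t)$ for $s,t$ in the same $K$-orbit with $t \in W$ --- indeed in each block $\sharp(K^{(i)} s) = N_i$ while $\sharp(K_W^{(i)} t)$ is the number of indices $j$ with $\tau(U)$ hitting $F^{(i)}_j$, a quantity independent of the orbit. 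Granting that comparison, the estimate $\sharp(C K_U s \setminus K_U s) \le \sharp((\widetilde C K s \setminus K s) \cap r^{-1}(U))$ (roughly) combined with the density of $M(K)$-measures gives the bound, and the remaining task is routine. I expect the choice of $\widetilde{C}$ and the uniformity of the orbit-size comparison over $s \in U$ to be the only genuinely non-trivial points.
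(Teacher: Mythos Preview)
Your proposal has a real gap: the choice of the enlarged compact set $\widetilde{C}$ is circular. You want $\widetilde{C}$ to contain ``the $K$-sets $V^{(i)}_{j,k}$ (or rather $G$-sets implementing them) and their products with $C$,'' but these sets depend on the elementary subgroupoid $K$, which in turn is chosen only \emph{after} $\widetilde{C}$ is fixed. There is no a priori bound on the size or shape of the fundamental-domain $K$-sets coming from almost finiteness, so you cannot simply pick $\widetilde{C}$ ``large enough'' in advance. Relatedly, the ``uniformity of orbit-size comparison'' you flag is a genuine obstacle: $K$-fullness of $U$ only tells you each $K$-orbit meets $U$, not that $\sharp(K_U s)/\sharp(Ks)$ is bounded below uniformly in $s$; for all you know this ratio can be arbitrarily close to zero on some orbits, which would kill the invariance estimate for $K_U$.

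The paper sidesteps both problems with one clean idea: use the $G$-fullness of $U$ to fix, once and for all, finitely many compact $G$-sets $V_1,\ldots,V_n$ with $r(V_i)\subset U$ and $\bigcup_i s(V_i)=G^{(0)}$, and take $\widetilde{C}:=C\cup V_1\cup\cdots\cup V_n$. This depends only on $U$, not on $K$. Now for any $(\widetilde{C},\epsilon/2n)$-invariant elementary $K$ (with $\epsilon\le 1$), a pigeonhole on the $V_i$'s gives some $i$ with $\sharp(V_iKs)\ge \tfrac{1}{n}\sharp(Ks)$; invariance then forces $\sharp(V_iKs\cap Ks)\ge \tfrac{1}{2n}\sharp(Ks)$, and since $r(V_i)\subset U$ this intersection lies in $K_Us$. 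Thus $\sharp(K_Us)\ge \tfrac{1}{2n}\sharp(Ks)$ uniformly, and the inclusion $CK_Us\setminus K_Us\subset \widetilde{C}Ks\setminus Ks$ finishes the estimate. No fundamental-domain bookkeeping, no appeal to Lemmas \ref{Lem:Gsets}, \ref{Lem:prob1}, \ref{Lem:prob2} is needed; the whole proof is a direct count.
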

\begin{proof}
Since $U$ is $G$-full,
one can take compact $G$-sets $V_1, \ldots, V_n$ satisfying
$r(V_i)\subset U$ for all $i$ and $\bigcup_{i=1}^n s(V_i)=G^{(0)}$.
Let a compact subset $C \subset G_U$ and $\epsilon >0$ be given.
We assume that $\epsilon \leq 1$.
Set $\widetilde{C}:=C \cup V_1 \cup V_2 \cup \cdots \cup V_n.$
We claim that for any $(\widetilde{C}, \epsilon/2n)$-invariant elementary
subgroupoid $K$ of $G$,
the restriction groupoid $K_U$ is a $(C, \epsilon)$-invariant elementary subgroupoid
of $G_U$. This completes the proof.

Let $K$ be a $(\widetilde{C}, \epsilon/2n)$-invariant elementary subgroupoid of $G$.
Let $s\in U$ be given.
To prove the claim, we first show the inequality
\[ \sharp(K_U s)\geq \frac{1}{2n}\sharp(Ks).\]
The equality $\bigcup_{i=1}^n s(V_i)=G^{(0)}$ yields
\[\sum_{i=1}^n \sharp(V_iKs)\geq \sharp (Ks).\] 
Hence one can choose $i \in \{1, \ldots, n\}$ satisfying
\[\sharp(V_iKs)\geq \frac{1}{n}\sharp(Ks).\]
By the choice of $K$ and the assumption $\epsilon \leq 1$,
we have \[\sharp(V_iKs\setminus Ks)\leq \frac{1}{2n}\sharp(Ks).\]
Since $V_iKs\subset G_U$, these two inequalities yield
\begin{eqnarray*}\sharp(K_U s)&\geq& \sharp(V_iKs \cap Ks)\\
&\geq& \sharp(V_iKs)- \frac{1}{2n}\sharp(Ks)\\
&\geq& \frac{1}{2n}\sharp(Ks).
\end{eqnarray*}

Since
$CK_U s\setminus K_U s \subset \widetilde{C}Ks\setminus Ks$,
we conclude
\begin{eqnarray*}
\frac{\sharp(CK_U s\setminus K_U s)}{\sharp (K_U s)}&\leq&2n\frac{\sharp(\widetilde{C}K s\setminus K s)}{\sharp (Ks)}\\
&<&\epsilon.
\end{eqnarray*}
\end{proof}
The following statement is useful to find minimal almost finite groupoids.
\begin{Lem}\label{Lem:invsub}
Let $G$ be an almost finite groupoid.
Let $C$ be a closed $G$-invariant subset of $G^{(0)}$.
Then the restriction groupoid $G_C$ is almost finite.
\end{Lem}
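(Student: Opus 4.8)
The plan is to reduce the almost finiteness of $G_C$ to that of $G$ by relating $(D,\epsilon)$-invariant elementary subgroupoids of $G_C$ to $(C',\epsilon')$-invariant elementary subgroupoids of $G$, where $C'$ is a suitable compact subset of $G$. The first condition in the definition of almost finiteness is easy: the union of compact open $G$-sets covers $G$, and intersecting each such $G$-set with $G_C = r^{-1}(C)\cap s^{-1}(C)$ yields relatively compact, relatively open $G_C$-sets whose union is $G_C$; since $C$ is closed, $G_C$ is closed in $G$, so these sets are in fact compact open $G_C$-sets, and they cover $G_C$. So the work is entirely in verifying condition (2).

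First I would fix a compact subset $D\subset G_C$ and $\epsilon>0$, and enlarge $D$ to a compact subset $\widetilde{D}\subset G$ with $D\subset\widetilde{D}$, for instance by covering $D$ by interiors of compact open $G$-sets; the point is that we want to control $\widetilde{D}K s$ for $s\in C$ while working inside the ambient groupoid. Then I would invoke almost finiteness of $G$ to produce a $(\widetilde{D},\epsilon)$-invariant elementary subgroupoid $K$ of $G$, and set $K_C := K\cap G_C$, which is a compact subgroupoid of $G_C$ containing $C = G_C^{(0)}$. The key claim is that $K_C$ is elementary in $G_C$ and $(D,\epsilon)$-invariant. For the invariance estimate, the crucial observation is that for $s\in C$ the whole orbit $Ks$ already lies in $r^{-1}(C)$: indeed $s\in C$ and $C$ is $G$-invariant, so $r(Gs)\subset$ (the invariant set generated by $s$) $\subset C$; hence $Ks = K_C s$ for every $s\in C$, and likewise $\widetilde{D}Ks\setminus Ks \supset DK_C s\setminus K_C s$ when this stays inside $G_C$ — which it does, again by $G$-invariance of $C$ applied to the range. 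Thus
\[
\frac{\sharp(DK_C s\setminus K_C s)}{\sharp(K_C s)} \le \frac{\sharp(\widetilde{D}Ks\setminus Ks)}{\sharp(Ks)} < \epsilon
\]
for all $s\in C$, giving $(D,\epsilon)$-invariance. This is a genuine simplification over the $G$-full case of Lemma \ref{Lem:full}: there one had to compare orbit sizes because restricting could shrink orbits, but here invariance of $C$ guarantees orbits are untouched, so no factor of $2n$ appears and one may even take $\widetilde D$-invariance with the \emph{same} $\epsilon$.

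It remains to see that $K_C$ is elementary, i.e.\ admits a fundamental domain. Here I would use the fundamental-domain partition $K^{(0)}=\bigsqcup_{i}\bigsqcup_j F^{(i)}_j$ of $K$ from Definition \ref{Def:fd}, intersect everything with $C$, and restrict the $K$-sets $V^{(i)}_{j,k}$ to $G_C$. The subtle point — and the step I expect to be the main obstacle — is that intersecting with $C$ might break the combinatorial rigidity of Definition \ref{Def:fd}: a priori $F^{(i)}_j\cap C$ and $F^{(i)}_k\cap C$ need not be homeomorphic via $\tau_{V^{(i)}_{j,k}}$ unless $C$ is saturated appropriately within each elementary block. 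However, since $C$ is $G$-invariant (hence $K$-invariant) and each block $F^{(i)}_1\times(\mathbb{Z}_{N_i}\rtimes\mathbb{Z}_{N_i})$ from Remark \ref{Rem:fd} is a single orbit-equivalence pattern, $K$-invariance of $C$ forces $C$ to meet such a block in a set of the form $F^{(i)}_1\cap C$ transported across all $j$; that is, $\tau_{V^{(i)}_{j,k}}(F^{(i)}_k\cap C)=F^{(i)}_j\cap C$. Then the clopen sets $F^{(i)}_j\cap C$ (in $C$, which is clopen-compatible with the subspace topology) together with the restricted $K$-sets give a fundamental domain for $K_C$, with the role of $F$ played by $\bigsqcup_i (F^{(i)}_1\cap C)$ and possibly discarding indices $i$ for which $F^{(i)}_1\cap C=\emptyset$. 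Checking that the three conditions of Definition \ref{Def:fd} survive this restriction — particularly condition (3), that the restricted $V^{(i)}_{j,k}$'s still partition $K_C$ — is the bookkeeping I would carry out carefully but expect to go through routinely once $K$-invariance of $C$ is used to pin down how $C$ sits inside each block.
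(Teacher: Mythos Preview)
Your proposal is correct and follows essentially the same route as the paper's (very terse) proof: restrict a $(D,\epsilon)$-invariant elementary subgroupoid $K$ of $G$ to $C$ and use $G$-invariance of $C$ to see that $K_C$ inherits both elementariness and the invariance estimate. One minor simplification: since $C$ is closed, $G_C$ is closed in $G$, so $D$ is already compact in $G$ and the enlargement to $\widetilde{D}$ is unnecessary---you can apply almost finiteness of $G$ directly to $(D,\epsilon)$, exactly as the paper does.
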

\begin{proof}
Let $D$ be a compact subset of $G_C$ and let $\epsilon>0$.
Take a $(D, \epsilon)$-invariant elementary subgroupoid $K$
of $G$. Since $C$ is $G$-invariant and closed,
the restriction groupoid $K_C$ provides a $(D, \epsilon)$-invariant elementary subgroupoid of $G_C$.
\end{proof}

Now it is convenient to introduce the following notion of equivalence for
clopen subsets of the unit space of a groupoid $G$. 
We say that two clopen subsets $U, V$ of $G^{(0)}$ are equivalent in $G$
if there is a compact open $G$-set $W$
satisfying $r(W)=U$ and $s(W)=V$. We denote this equivalence relation by $\sim$.

As a consequence of Lemma \ref{Lem:full}, we have the following divisibility property of
a clopen subset of a minimal almost finite groupoid.
This property plays an important role in the proof of the Main Theorem.

\begin{Lem}\label{Lem:div}
Let $G$ be a minimal almost finite groupoid of infinite cardinality.
Let $U\subset G^{(0)}$ be a clopen subset.
Then, for any $N\in \mathbb{N}$, there is a partition
$U=\bigsqcup_{i=1}^n\bigsqcup_{j=1}^{N_i}U^{(i)}_{j}$ of $U$ by clopen subsets
such that $U^{(i)}_{j} \sim U^{(i)}_{k}$ for all $i, j, k$, and that $N_i\geq N$ for all $i$.
\end{Lem}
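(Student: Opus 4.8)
The plan is to reduce the statement to the structure theory of elementary subgroupoids together with the restriction result of Lemma~\ref{Lem:full}. First I would fix $N\in\mathbb{N}$ and choose a compact set $C\subset G$ and $\epsilon>0$ small enough that any $(C,\epsilon)$-invariant elementary subgroupoid $K$ of $G$ has all of its ``orbit sizes'' $\sharp(Ks)$ comparatively large --- more precisely, I want to arrange, using the almost finiteness of $G$ and the fact that $G$ has infinite cardinality (hence, by minimality, $G^{(0)}$ has no isolated points that are $G$-orbit-closed in a small way, so that the averaging in the definition of $(C,\epsilon)$-invariance forces $\min_{s}\sharp(Ks)\to\infty$ as $C$ exhausts $G$ and $\epsilon\to 0$), that each block $N_i$ in the fundamental-domain decomposition of $K$ satisfies $N_i\ge N$. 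Concretely, pick finitely many compact open $G$-sets $W_1,\dots,W_m$ with $r(W_k)=s(W_k)=G^{(0)}$ whose union meets every orbit in at least $N$ points (possible since $G$ is minimal and infinite, using that the union of compact open $G$-sets covers $G$); put these into $C$; then for a $(C,\epsilon)$-invariant $K$ one gets $\sharp(Ks)\ge N$ for all $s$, and since $K\cong\coprod_{i=1}^{n}F^{(i)}_1\times(\mathbb{Z}_{N_i}\rtimes\mathbb{Z}_{N_i})$ (Remark~\ref{Rem:fd}) the orbit through a point of $F^{(i)}_j$ has exactly $N_i$ elements, so $N_i\ge N$ for every $i$.

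Next, having such a $K$, write its fundamental domain decomposition $G^{(0)}=\bigsqcup_{i=1}^{n}\bigsqcup_{j=1}^{N_i}F^{(i)}_j$ with the compact open $K$-sets $V^{(i)}_{j,k}$ implementing $F^{(i)}_j\sim F^{(i)}_k$. I would then intersect with the given clopen set $U$: set $U^{(i)}_j:=U\cap F^{(i)}_j$ for each $i,j$. These are clopen, they partition $U$, and each has $N_i\ge N$ ``columns''. The only remaining point is that $U^{(i)}_j\sim U^{(i)}_k$ inside $G$ for all $i,j,k$; but $V^{(i)}_{j,k}$ restricted to the clopen subset $s^{-1}(U^{(i)}_k)$ of it is again a compact open $G$-set (every compact open $K$-set is a compact open $G$-set since $K$ is an open, in fact clopen, subgroupoid of $G$), with range $U^{(i)}_j$ and source $U^{(i)}_k$, so the equivalence $\sim$ holds with these witnesses. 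Relabelling, $U=\bigsqcup_{i=1}^{n}\bigsqcup_{j=1}^{N_i}U^{(i)}_j$ is the desired partition with $N_i\ge N$.

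The step I expect to be the main obstacle is the first one: guaranteeing that the block sizes $N_i$ can be forced to exceed $N$, i.e.\ turning the global infiniteness and minimality of $G$ into the quantitative statement $\min_s\sharp(Ks)\ge N$ for suitably invariant $K$. The subtlety is that $(C,\epsilon)$-invariance only controls the \emph{relative} boundary $\sharp(CKs\setminus Ks)/\sharp(Ks)$, so one must exhibit a fixed finite family of compact open $G$-sets whose translates genuinely enlarge every small orbit by a definite multiplicative factor; this is where one uses that, by minimality and infinite cardinality, no orbit is finite (if some orbit $Gx\cap G^{(0)}$ were finite it would be closed and $G$-invariant, forcing $G^{(0)}$ to be a finite set and $G$ to be finite), combined with a compactness argument over $G^{(0)}$ to get uniformity. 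Alternatively, and perhaps more cleanly, one can invoke Lemma~\ref{Lem:full} directly: every nonempty clopen subset of $G^{(0)}$ is automatically $G$-full by minimality, so replacing $G$ by $G_U$ one may assume $U=G^{(0)}$, and then one only needs a single $(C,\epsilon)$-invariant elementary $K$ with all block sizes $\ge N$, which is exactly the quantitative claim above applied to $G_U$. Once that is in hand the rest is the bookkeeping of the previous paragraph.
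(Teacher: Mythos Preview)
Your \emph{alternative} route --- reducing to $U=G^{(0)}$ via Lemma~\ref{Lem:full} and then producing an elementary subgroupoid $K$ with $\sharp(Ks)\ge N$ for all $s$ --- is correct and is exactly what the paper does. The paper's execution of the quantitative step is slightly cleaner than your sketch: rather than seeking bisections $W_k$ with $r(W_k)=s(W_k)=G^{(0)}$ (which may not exist in a general \'etale groupoid), one takes any compact $C\subset G$ with $\sharp(Cs)\ge 2N$ for every $s\in G^{(0)}$ (found by exhausting $G$ by compact open sets and using compactness of $G^{(0)}$), and then any $(C,1)$-invariant elementary $K$ works, since
\[
\sharp(Cs)-\sharp(Ks)\le\sharp(CKs\setminus Ks)<\sharp(Ks)
\]
forces $\sharp(Ks)\ge N$.

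Your \emph{primary} approach, however --- keeping $U$ arbitrary and intersecting the fundamental-domain decomposition of a suitable $K$ with $U$ --- has a genuine gap, and it is not in the step you flagged. The assertion that $V^{(i)}_{j,k}$ restricted to $s^{-1}(U^{(i)}_k)$ has range $U^{(i)}_j=U\cap F^{(i)}_j$ is false: its range is $\tau_{V^{(i)}_{j,k}}(U\cap F^{(i)}_k)$, a clopen subset of $F^{(i)}_j$ that has no reason to coincide with $U\cap F^{(i)}_j$. Nothing forces $K$ to respect $U$; the sets $U\cap F^{(i)}_j$ for varying $j$ need not even have equal $K$-invariant measure (take $U$ meeting some levels of a tower and missing others), so they cannot in general be equivalent. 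This is precisely why the reduction through Lemma~\ref{Lem:full} is not merely ``cleaner'' but essential: passing to $G_U$ ensures that the elementary subgroupoid one builds has unit space $U$, so its fundamental-domain pieces already partition $U$ and are pairwise equivalent by construction. Once you commit to that reduction, the rest of your plan is fine.
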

\begin{proof}
By Lemma \ref{Lem:full}, we only need to show the statement in the case $U=G^{(0)}$.
In this case, the statement is equivalent to
the existence of an elementary subgroupoid $K$ of $G$
satisfying $\sharp(Ks)\geq N$ for all $s\in G^{(0)}$.

To find such a subgroupoid, take a compact subset $C$ of $G$
satisfying $\sharp(Cs)\geq 2N$ for all $s\in G^{(0)}$.
(To find such a $C$, take an increasing net $(C_\lambda)_{\lambda \in \Lambda}$ of compact open subsets
of $G$ whose union covers $G$.
Then, for each $\lambda\in \Lambda$,
the set
\[U_\lambda:=\{ s\in G^{(0)}: \sharp(C_\lambda s)\geq 2N\}\]
is clopen, and the union of the increasing net $(U_\lambda)_{\lambda \in \Lambda}$ covers $G^{(0)}$.
By the compactness of $G^{(0)}$,
for sufficiently large $\lambda \in \Lambda$, the compact set $C_\lambda$ satisfies the desired condition.)
Take a $(C, 1)$-invariant elementary subgroupoid $K$ of $G$.
Then, for any $s\in G^{(0)}$, we have
\[\sharp(Cs)-\sharp(Ks)\leq \sharp(CKs\setminus Ks)\leq \sharp(Ks).\]
This yields $\sharp (Ks)\geq N$ for all $s\in G^{(0)}$.
\end{proof}

\begin{Lem}[cf.~\cite{Mat12}, Lemma 6.7; see also Lemma 2.5 of \cite{GW} and Lemma 3.5 of \cite{GPS99}]\label{Lem:com}
Let $G$ be an almost finite groupoid.
Let $U$ and $V$ be clopen subsets of $G^{(0)}$ satisfying
$\mu(U)<\mu(V)$ for all $\mu \in M(G)$.
Then there is an element $\varphi \in [[G]]$
with $\varphi(U) \subset V$.
\end{Lem}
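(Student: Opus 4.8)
The plan is to reduce the statement to a comparison of $K$-orbits for a single elementary subgroupoid $K$, and then to build $\varphi$ explicitly using the fundamental-domain structure of $K$. First I would invoke a compactness argument to replace the hypothesis ``$\mu(U)<\mu(V)$ for all $\mu\in M(G)$'' by a uniform gap: since $M(G)$ is weak-$\ast$ compact and $\mu\mapsto\mu(V)-\mu(U)$ is continuous and strictly positive, there is $\delta>0$ with $\mu(V)-\mu(U)>\delta$ for all $\mu\in M(G)$. The goal is then to choose an elementary subgroupoid $K$ of $G$ so invariant that, for \emph{every} $\mu\in M(K)$, the same gap (say, with $\delta$ replaced by $\delta/2$) persists; this is exactly what Lemma~\ref{Lem:prob2} buys, combined with Lemma~\ref{Lem:prob1}. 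Concretely, if no such $K$ existed, one could take nets $(C_\lambda)$, $(\epsilon_\lambda)$, $(K_\lambda)$ as in Lemma~\ref{Lem:prob2} and measures $\mu_\lambda\in M(K_\lambda)$ witnessing $\mu_\lambda(V)-\mu_\lambda(U)\le\delta/2$; a weak-$\ast$ cluster point $\mu$ of $(\mu_\lambda)$ lies in $M(G)$ by Lemma~\ref{Lem:prob2} and satisfies $\mu(V)-\mu(U)\le\delta/2$, a contradiction. (Here one needs that $U,V$ are clopen so that $s\mapsto\mu_\lambda(U)$, $s\mapsto\mu_\lambda(V)$ pass to the limit; this is the same bookkeeping as in the proof of Lemma~\ref{Lem:prob2}.)

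Next, working inside such a $K$, I would pass to the partition $K^{(0)}=\bigsqcup_{i=1}^{n}\bigsqcup_{j=1}^{N_i}F^{(i)}_j$ afforded by a fundamental domain, with compact open $K$-sets $V^{(i)}_{j,k}$ implementing the identifications $F^{(i)}_j\sim F^{(i)}_k$. By intersecting $U$ and $V$ with the pieces $F^{(i)}_j$ and transporting everything to the fundamental-domain pieces $F^{(i)}_1$ via the $V^{(i)}_{j,k}$'s, the problem on each ``block'' $i$ becomes: given clopen subsets $U_i,V_i\subset F^{(i)}_1$ with $\sharp\{j: \tau^{-1}(\text{lift of }U_i\text{ into } F^{(i)}_j)\}$ suitably dominated by the corresponding count for $V_i$ (this is the content of the persisting measure gap, since on an elementary groupoid the invariant measures are the obvious ones by Remark~\ref{Rem:fd}), find a bijection. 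Because $N_i\ge$ (something controlled by $1/\delta$) — which we may assume after using Lemma~\ref{Lem:div} to refine $K$ so all $N_i$ are large — the measure gap forces that the ``copies'' of $U$ visible across the $N_i$ fibres can be slotted, fibre by fibre, into the copies of $V$. One then assembles, for each block $i$, a compact open $K$-set (hence a compact open $G$-set) $W_i$ with $r(W_i)=$ (the relevant part of $U$) and $s(W_i)\subset V$, takes $W=\bigsqcup_i W_i$, and extends $\tau_W$ to an element of $[[G]]$ by filling in the complement $G^{(0)}\setminus s(W)$, $G^{(0)}\setminus r(W)$ with any compact open $G$-set between them — or rather, one directly constructs a compact open $G$-set with full range and source that restricts to the desired partial homeomorphism; the existence of such completions is routine using the discussion of $G$-full sets and compact $G$-sets in Section~\ref{SubSec:Groupoids}.

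The main obstacle I expect is the combinatorial/measure-theoretic heart of the middle step: turning the uniform inequality $\mu(V)-\mu(U)>\delta/2$ for all $\mu\in M(K)$ into an honest, \emph{fibrewise} injection of $U$ into $V$ inside the elementary groupoid $K$. On a single block $F^{(i)}_1\times(\mathbb Z_{N_i}\rtimes\mathbb Z_{N_i})$ the invariant measures restrict to probability measures on $F^{(i)}_1$ (weighted by $1/N_i$ across fibres), so the inequality says the ``total mass'' of $U$ in this block is strictly less than that of $V$; one wants to conclude a clopen partition of $U$ matching into $V$ by translations within fibres. This is precisely the elementary-groupoid analogue of Lemma~6.1 of~\cite{Mat12}, and Remark~\ref{Rem:fd} asserts that analogue holds; the care needed is (a) making the bounds uniform in $s\in G^{(0)}$ across all finitely many blocks simultaneously, and (b) ensuring the pieces one produces are genuinely clopen so that the resulting $G$-set is compact open and $\tau_W\in[[G]]$. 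Once the elementary-groupoid matching lemma is in hand, gluing over $i$ and extending over the (full, since $\mu(U)<\mu(V)\le 1$) complement is bookkeeping. I would therefore structure the write-up as: (1) uniform gap via compactness; (2) choice of $K$ with all $N_i$ large and the gap surviving on $M(K)$; (3) the elementary-groupoid matching on each block; (4) assembly and extension to $[[G]]$.
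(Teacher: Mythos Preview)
Your approach is essentially the one the paper indicates (it simply says: the analogue of Lemma~6.1 of \cite{Mat12} holds by Remark~\ref{Rem:fd}, and together with Lemma~\ref{Lem:prob2} one repeats Matui's proof of his Lemma~6.7). Two points, however, need correction.

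First, your appeal to Lemma~\ref{Lem:div} is both unavailable and unnecessary. Lemma~\ref{Lem:div} assumes $G$ is \emph{minimal}, which is not part of the hypothesis here, so you cannot invoke it to force the $N_i$ large. Fortunately you do not need it: once you have $\mu(U)<\mu(V)$ for all $\mu\in M(K)$, evaluating at the extreme invariant measures of the elementary groupoid $K$ (the uniform measures on $K$-orbits) gives $\sharp(r(Ks)\cap U)<\sharp(r(Ks)\cap V)$ for every $s\in G^{(0)}$. This \emph{pointwise integer} inequality is exactly the input for the elementary-groupoid matching (the analogue of Lemma~6.1 of \cite{Mat12} provided by Remark~\ref{Rem:fd}); no lower bound on the $N_i$ is required.

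Second, the extension to $[[G]]$ is not ``routine via Section~\ref{SubSec:Groupoids}'': that section gives no mechanism for producing a compact open $G$-set between two prescribed clopen complements. The clean way is to stay inside $K$. If $W$ is the compact open $K$-set with $r(W)=U$ and $s(W)\subset V$ produced above, then $\sharp(r(Ks)\cap U)=\sharp(r(Ks)\cap s(W))$ for every $s$, so the complements $G^{(0)}\setminus U$ and $G^{(0)}\setminus s(W)$ have equal $K$-orbit counts and are therefore $K$-equivalent by the same elementary matching lemma. Gluing the witnessing $K$-set to $W$ yields a compact open $K$-set with full range and source, i.e.\ an element of $[[K]]\subset[[G]]$.
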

\begin{proof}
As mentioned in Definition \ref{Def:fd}, Lemma 6.1 of \cite{Mat12} is valid
for any groupoid admitting a fundamental domain.
This fact and Lemma \ref{Lem:prob2} allow us to prove
the claim in the same way as Lemma 6.7 of \cite{Mat12}.
See \cite{Mat12} for the details.
\end{proof}
\section{Stable rank of \Cs -algebras of almost finite groupoids}\label{Sec:Main}
In this section, we prove the Main Theorem.
Since the Main Theorem is obviously valid for groupoids of finite cardinality,
throughout this section, we suppose that groupoids are of infinite cardinality.

We first prove the following key lemma, which is about zero divisors of the reduced groupoid C$^\ast$-algebras.
Recall that an element $a$ of a ring $R$ is
called a right (resp.~left) zero divisor
if there is a nonzero element $x\in R$ satisfying $xa=0$ (resp.~$ax=0$).
An element of $R$ is called a two-sided zero divisor if
it is both a right and a left zero divisor.
\begin{Lem}\label{Lem:zerodiv}
Let $G$ be a minimal almost finite groupoid.
Let $a\in {\rm C}^\ast _{\mathrm r}(G)$ be a right zero divisor.
Then for any $\epsilon >0$, there are a nonempty clopen subset $U \subset G^{(0)}$
and a unitary element $u\in {\rm C}^\ast _{\mathrm r}(G)$ satisfying
$\|\chi_U ua \| <\epsilon$.
The analogous statement also holds true for left zero divisors.
\end{Lem}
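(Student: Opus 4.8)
The plan is to exploit the fact that a right zero divisor $a$ has nontrivial left kernel in $C^\ast_{\mathrm r}(G)$, and to approximate the kernel by something supported on a clopen subset of $G^{(0)}$ via the canonical conditional expectation $E$. Pick a nonzero $x$ with $xa=0$; after multiplying by a scalar and replacing $x$ by $x^\ast x$-type positivity tricks we may assume $\|x\|=1$, and by density we may take $x\in C_{\mathrm c}(G)$ up to a small error (absorbing the error into $\epsilon$). The element $b:=x^\ast x$ is a nonzero positive element of $C_{\mathrm c}(G)$ with $ba$ of small norm; moreover $E(b)$ is a nonzero positive function in $C(G^{(0)})$, so there is a nonempty open set on which $E(b)\geq \delta$ for some $\delta>0$. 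Shrinking, I can find a nonempty clopen $W\subset G^{(0)}$ and a point where $E(b)$ is bounded below on $W$; the goal is to transport $\chi_W$ onto a clopen set $U$ where $b$ itself (not merely $E(b)$) is comparable to a projection, using the groupoid machinery.

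The core step is a comparison/divisibility argument. Using minimality and almost finiteness, Lemma \ref{Lem:div} and Lemma \ref{Lem:com} let me cut $G^{(0)}$ into many equivalent clopen pieces and move small clopen sets around inside $G^{(0)}$ by elements of $[[G]]$. The idea is: since $b$ is ``concentrated'' near where $E(b)$ is large, I want to produce a nonzero projection $p=\chi_U$ and a unitary $u$ (built from a topological full group element together with the divisibility of the unit space) such that $\chi_U u$ compresses $b$ into a corner where the operator norm is controlled by $E$-values, which are small because $ba$ is small. Concretely, write $a^\ast b^\ast b a\approx 0$; one wants $U$ and $u$ with $\|\chi_U u\, b\| $ small, because then $\|\chi_U u\, a\| \le \|\chi_U u\, b^{1/2}\|\cdot(\text{something})$ — but this requires $b$ to be invertible-like on $U$, which is false in general, so instead the right formulation is to choose $u$ so that $u^\ast \chi_U u$ sits under the support projection of $b$ in a way visible to $E$. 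The almost finiteness gives, for any prescribed $C\subset G$ and $\epsilon$, a $(C,\epsilon)$-invariant elementary subgroupoid $K$; restricting to $K$, the algebra $C^\ast_{\mathrm r}(K)$ is a direct sum of homogeneous algebras (Remark \ref{Rem:fd}), where stable-rank-one-type manipulations and the existence of the desired unitary are elementary, and the $(C,\epsilon)$-invariance is what lets these local unitaries be ``good enough'' against the fixed element $a$ (after first approximating $a$ by a compactly supported function, so only finitely much of $G$ matters).

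So the skeleton is: (i) reduce to $a,x\in C_{\mathrm c}(G)$ by density; (ii) set $b=xx^\ast$ or $x^\ast x$ appropriately, record that $E(b)\neq 0$ and $\|ba\|$ (or $\|a^\ast b\|$, matching the right/left convention) is small; (iii) choose $C\subset G$ containing the supports of $a$ and $b$, and invoke almost finiteness to get a $(C,\epsilon')$-invariant elementary $K$; (iv) inside the elementary groupoid, where everything is a finite-matrix computation over clopen sets, produce the clopen $U\subset G^{(0)}$ and the unitary $u$; (v) use $(C,\epsilon')$-invariance plus Lemma \ref{Lem:prob1}-type estimates to pass the local inequality back to $C^\ast_{\mathrm r}(G)$, getting $\|\chi_U u a\|<\epsilon$; (vi) for left zero divisors, apply the result to $a^\ast$ and take adjoints, using $\|a u^\ast \chi_U\| = \|\chi_U u a^\ast\|$. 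The main obstacle will be step (iv)–(v): making precise how ``$E(b)$ small implies $b$ small after a unitary twist onto a clopen set'' — the point is that $b$ is positive, so its norm on a spectral subspace is controlled, but converting a spectral projection of $b$ (which is not in $C_{\mathrm c}(G)$) into an honest clopen $\chi_U$ requires the divisibility of Lemma \ref{Lem:div} and a careful choice of $u\in[[G]]$ (extended by the identity) so that $\chi_U u$ annihilates $b$ up to $\epsilon$; getting the bookkeeping of the two small parameters ($\epsilon$ for the zero-divisor relation, $\epsilon'$ for almost finiteness) to close is the delicate part.
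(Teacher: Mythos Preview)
Your outline has a genuine gap at the point you yourself flag as ``the main obstacle.'' The paper's proof does \emph{not} work inside an elementary subgroupoid $K$ and then use $(C,\epsilon')$-invariance to transfer norm estimates back to ${\rm C}^\ast_{\mathrm r}(G)$; that mechanism is measure-theoretic (cf.\ Lemma~\ref{Lem:prob1}) and gives no handle on operator norms, and in any case $a$ and $b$ are not elements of ${\rm C}^\ast_{\mathrm r}(K)$. Moreover your formulation ``$E(b)$ small implies $b$ small after a unitary twist'' is backwards: $E(b)$ is bounded \emph{below} near some point, while what is small is $\|b a\|$. The role of almost finiteness in this lemma is only indirect: it yields essential principality (Lemma~\ref{Lem:ep}), the divisibility Lemma~\ref{Lem:div} (to arrange that a certain clopen set $W$ is proper), and Lemma~\ref{Lem:Gsets} (to cover a connected component by images of a small clopen set).

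The missing idea is an ``escape to the Cartan subalgebra.'' After approximating the positive annihilator by $x_0\in C_{\mathrm c}(G)$ and forming $y:=\|x_0\chi_U x_0\|^{-1}x_0\chi_U x_0$ with $y\le \chi_W$ for a \emph{proper} clopen $W$, essential principality lets one pick a point $s$ with trivial isotropy on $\supp(y^2)$, and hence a function $f\in C(G^{(0)})$ with $fy^2f\in C(G^{(0)})$, $\|fy^2f\|=1$, and $\{fy^2f=1\}$ having nonempty interior. One then builds, using Lemmas~\ref{Lem:Gsets} and~\ref{Lem:conn2}, an element $w\in C_{\mathrm c}(G)$ with $wfy^2fw^\ast=\chi_Z$ for a clopen $Z\subset G^{(0)}\setminus W$. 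The element $t:=yfw^\ast$ is then a partial isometry with $t^\ast t=\chi_Z$, $tt^\ast\le\chi_W$, and since $Z\cap W=\emptyset$ one sets $u:=t+t^\ast+(1-t^\ast t-tt^\ast)$, a genuine unitary (not from $[[G]]$, not from any ${\rm C}^\ast_{\mathrm r}(K)$). The estimate is then algebraic: $u^\ast\chi_Z u=tt^\ast=yfw^\ast wfy$, so $\|\chi_Z ua\|\le\|yf\|\,\|w^\ast w\|\,\|f\|\,\|ya\|<\epsilon$. None of this is visible in your sketch; the unitary you are looking for is built \emph{from} the approximate annihilator $y$ itself, not from matrix manipulations in a homogeneous subalgebra.
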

\begin{proof}
Obviously it suffices to show the claim for right zero divisors.
Let a right zero divisor $a\in {\rm C}^\ast_{\mathrm r}(G)$ and $\epsilon>0$ be given.
We may assume $\|a\|=1$ and $\epsilon<1$.
Choose an element $x\in {\rm C}^\ast _{\mathrm r}(G)$ of norm one with $xa=0$.
By replacing $x$ by $x^\ast x$, we may assume $x$ is positive.
Since the canonical conditional expectation $E$ is faithful, we have $E(x)\neq 0$.

Let $\eta$ be a positive number less than $\rho:=\frac{1}{2}\|E(x)\|$ (determined explicitly later).
Choose a positive element $x_0\in C_{\mathrm c}(G)$
satisfying $\|x - x_0\| <\eta$ and $\|x_0\|= 1$.
Note that $\|E(x_0)\|>\rho$.
Choose compact open $G$-sets
$V_1, \ldots, V_n$ whose union covers $\supp(x_0)$.
Take $s\in G^{(0)}$ satisfying $E(x_0)(s)>\rho$.
Then, by Lemma \ref{Lem:div}, one can choose a clopen neighborhood $U\subset G^{(0)}$ of
$s$ satisfying
\[W:=\bigcup_{i=1}^n \tau_{V_i}(U)\neq G^{(0)}.\]
Note that $W$ is a clopen subset of $G^{(0)}$.
From the definition of $W$,
we have $\chi_W x_0\chi_U x_0=x_0\chi_U x_0$.
This implies the inequality $x_0\chi_U x_0 \leq \chi_{W}$.
Also, it follows from a direct computation that
$E(x_0 \chi_U x_0)(s)> \rho^2$.
Define
\[y:=\|x_0\chi_U x_0\|^{-1} x_0\chi_U x_0 \in C_{\mathrm c}(G).\]
We then have $\|E(y)\| > \rho^2$ and $\|y a\|<\rho^{-2}\eta$.
Therefore, by choosing $\eta= \rho^4 \epsilon$,
we obtain a positive element $y\in C_{\mathrm c}(G)$ of norm one
with the following conditions.
\begin{enumerate}[\upshape (1)]
\item $\|E(y) \|> \rho^2$.
\item $\|ya\|<\rho^2 \epsilon$.
\item There is a proper clopen subset $W \subset G^{(0)}$ satisfying $y\leq \chi_W$.
\end{enumerate}
Note that $E(y^2)\geq E(y)^2$ hence $\|E(y^2)\|>\rho^4$.

Next we will find a nonnegative function $f\in C(G^{(0)})$ satisfying the following conditions.
\begin{enumerate}[\upshape (i)]
\item $\|f\|\leq \rho^{-2}$.
\item $fy^2 f\in C(G^{(0)})$.
\item $\|fy^2f\|=1$.
\item The set $U:=\inte(\{ s\in G^{(0)}:(fy^2f)(s)=1\})$ is nonempty.
\end{enumerate}
To construct such a function, we observe first that since $G$ is essentially principal, one can choose $s\in G^{(0)}$
satisfying $y^2(s)>\rho^4$ and
\[r^{-1}(\{s\})\cap s^{-1}(\{s\})\cap \supp(y^2)=\{s\}.\]
Since $\supp(y^2)$ is compact, one can find
an open neighborhood $V \subset G^{(0)}$ of $s$
satisfying
\[r^{-1}(V)\cap s^{-1}(V)\cap \supp(y^2)\subset G^{(0)}.\]
This relation implies
 \[C_0(V) \cdot y^2 \cdot C_0(V) \subset C(G^{(0)}).\]
Take a nonnegative function $g\in C_0(V)$ satisfying
\[(y^2(s))^{-1/2} < g(s) \leq \|g\| \leq \rho^{-2}.\]
We then have $gy^2 g\in C(G^{(0)})$
and $(gy^2g)(s)>1$.
Set $D:=\{t\in G^{(0)}:(gy^2g)(t)\geq 1\}$.
Note that $D$ is a closed subset of $G^{(0)}$
containing $s$ in its interior.
Define \[h:=((gy^2g)|_D)^{-1/2} \in C(D).\]
Take a continuous extension $\tilde{h} \in C(G^{(0)})$ of $h$
satisfying $0 \leq \tilde{h} \leq 1$.
Then the function $f:=g\tilde{h} $ satisfies the desired conditions.

Take a nonempty open subset $U_a$ of $U$ satisfying
$\cl(U_a)\subset U$.
We also fix an element $s\in G^{(0)} \setminus W$.
Put $S:=\langle s \rangle$ (see Section \ref{ss:conn} for the notation).
By Lemma \ref{Lem:Gsets}, one can find compact open $G$-sets $W_1, \ldots, W_m$
satisfying $S \subset \bigcup_{i=1}^m{\tau_{W_i}(U_a)}$, $S\subset r(W_i)$ for all $i$,
and
$\tau_{W_i}^{-1}(S) \cap \tau_{W_j}^{-1}(S) =\emptyset$ for $i \neq j$.
Then by Lemma \ref{Lem:conn1}, one can take pairwise disjoint clopen subsets $Z_1, \ldots, Z_m$ of $G^{(0)}$
satisfying $\tau_{W_i}^{-1}(S) \subset Z_i$ for each $i$.
For each $i$, set $U_i:= U_a\cap Z_i$.
Then the closures of $U_1, \ldots, U_m$ in $G^{(0)}$ are pairwise disjoint.
Moreover, for each $i$, as $\tau_{W_i}^{-1}(S) \subset Z_i$, we have
$S \cap \tau_{W_i}(U_a) = S \cap \tau_{W_i}(U_i)$.
This yields $S\subset \bigcup_{i=1}^m \tau_{W_i}(U_i)$.
Since the union $\bigcup_{i=1}^m \tau_{W_i}(U_i)$ is open,
thanks to Lemma \ref{Lem:conn2}, one can find a clopen neighborhood $Z \subset G^{(0)}\setminus W$ of $s$ contained in $\bigcup_{i=1}^m {\tau_{W_i}(U_i)}$.

Choose nonnegative functions $\rho_1, \ldots, \rho_m \in C(G^{(0)})$
such that $\rho_i\rho_j=0$ for $i\neq j$, that $\rho_i \equiv 1$ on $U_i$ for each $i$, and that ${\rm supp}(\rho_i) \subset \cl(U)$ for each $i$.
Set
\[v:= \sum_{i=1}^m \chi_{W_i}\rho_i \in C_{\mathrm c}(G).\]
We then have
\begin{eqnarray*}
v fy^2fv^\ast&=&\sum_{1\leq i, j\leq m} \chi_{{W_i}} \rho_i fy^2f \rho_j \chi_{W_j}^\ast \\
&=&\sum_{i=1}^m \chi_{W_i}\rho_i^2 \chi_{W_i}^\ast\\
&=&\sum_{i=1}^m \rho_i^2\circ\tau_{W_i}^{-1}\\
&\geq& \chi_Z.
\end{eqnarray*}
Moreover, this computation shows $vfy^2fv^\ast \in C(G^{(0)})$.
Hence there is a function $k\in C(G^{(0)})$ of norm at most one
satisfying $kvfy^2fv^\ast k^\ast =\chi_Z$.
Take such a function $k$ and put $w:=kv$.
We claim that $w$ is of norm one.
Indeed, since $\rho_i=\rho_i fy^2f$ for all $i$, we have $v=vfy^2f$.
This shows $w=wfy^2f$.
Hence we have
\[ww^\ast = w fy^2f w^\ast=\chi _Z.\]
In particular this yields $\|w\| = 1$.

Set $t:=yfw^\ast$.
Then $t$ is a partial isometry element of ${\rm C}^\ast _{\mathrm r}(G)$ satisfying
$t^\ast t=\chi_Z$ and $tt^\ast\leq \chi_W$.
Since $Z$ and $W$ are disjoint,
the projections $t^\ast t$ and $tt^\ast$ are orthogonal.
Hence the element
\[u:=t+t^\ast+(1-t^\ast t-tt^\ast)\]
is a unitary element of ${\rm C}^\ast _{\mathrm r}(G)$.
Moreover, by definition, we have \[u^\ast\chi_Z u =tt^\ast= yfw^\ast wfy.\]
This shows 
\begin{eqnarray*} \|\chi_Z ua\|&=&\|yfw^\ast wfy a\|\\
&\leq&\|yf\| \|w^\ast w\| \|f\| \|ya\| \\
&<&\epsilon.
\end{eqnarray*}
Hence $Z$ and $u$ satisfy the required condition.
\end{proof}
The next lemma can be seen as a very special case of Green's imprimitivity theorem \cite{Gre}.
Since we need an explicit isomorphism, we include a proof.
\begin{Lem}\label{Lem:imp}
Let $K$ be a compact groupoid with a fundamental domain.
Let $K^{(0)}=\bigsqcup_{i=1}^n \bigsqcup_{j=1}^{N_i}{F^{(i)}_{j}}$
be a partition of $K^{(0)}$
as in Definition \ref{Def:fd}.
Then there is a $\ast$-isomorphism
\[\Phi\colon {\rm C}^\ast _{\mathrm r} (K) \rightarrow \bigoplus_{i=1}^n M_{N_i}(C(F^{(i)}_1)).\] 
\end{Lem}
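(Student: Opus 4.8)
The plan is to construct the isomorphism $\Phi$ concretely from the data of the fundamental domain, using the compact open $K$-sets $V^{(i)}_{j,k}$ as matrix units. First I would fix $i$ and work inside the block corresponding to the index $i$; since $K = \bigsqcup_{i=1}^n \bigsqcup_{1\le j,k\le N_i} V^{(i)}_{j,k}$ and the $F^{(i)}_j$ partition $K^{(0)}$, the functions $\chi_{F^{(i)}_j}$ are mutually orthogonal projections in $C(K^{(0)}) \subset {\rm C}^\ast_{\mathrm r}(K)$ summing (over all $i,j$) to the unit. For fixed $i$, I would show that the elements $e^{(i)}_{j,k} := \chi_{V^{(i)}_{j,k}}$ behave like matrix units: the groupoid relations $r(V^{(i)}_{j,k}) = F^{(i)}_j$, $s(V^{(i)}_{j,k}) = F^{(i)}_k$ together with uniqueness of these $K$-sets give $V^{(i)}_{j,k} V^{(i)}_{k,l} = V^{(i)}_{j,l}$ and $(V^{(i)}_{j,k})^{-1} = V^{(i)}_{k,j}$, whence $e^{(i)}_{j,k} e^{(i)}_{l,m} = \delta_{k,l}\, e^{(i)}_{j,m}$ (as convolution products of characteristic functions of $K$-sets) and $(e^{(i)}_{j,k})^\ast = e^{(i)}_{k,j}$. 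Moreover $e^{(i)}_{j,j} = \chi_{F^{(i)}_j}$, and $(e^{(i)}_{j,1})^\ast f\, e^{(i)}_{k,1}$ for $f \in C(F^{(i)}_1)$ lands in $C(F^{(i)}_1)$ after the obvious identification; this gives a copy of $M_{N_i}(C(F^{(i)}_1))$ inside the cut-down $\chi_{K^{(i)}}\, {\rm C}^\ast_{\mathrm r}(K)\, \chi_{K^{(i)}}$, where $K^{(i)} := \bigsqcup_j F^{(i)}_j$.

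Next I would define $\Phi$ on $C_{\mathrm c}(K)$ by sending $g$ to the tuple whose $i$-th entry is the matrix $\bigl(\, \iota_i\bigl((e^{(i)}_{j,1})^\ast\, g\, e^{(i)}_{k,1}\bigr)\,\bigr)_{j,k}$, where $\iota_i$ is the identification of $C(F^{(i)}_j)$-type corners with $C(F^{(i)}_1)$ induced by $\tau_{V^{(i)}_{j,1}}$; equivalently, use the restriction maps $g \mapsto g|_{V^{(i)}_{j,k}}$ composed with $\tau$. One checks directly on $C_{\mathrm c}(K)$ that $\Phi$ is a $\ast$-homomorphism, using the matrix-unit relations above and the decomposition $K = \bigsqcup V^{(i)}_{j,k}$ (so every $g \in C_{\mathrm c}(K)$, in fact every $g \in C(K)$ since $K$ is compact, is determined by its restrictions to the $V^{(i)}_{j,k}$). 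Surjectivity is essentially built in: the matrix units $e^{(i)}_{j,k}$ and the subalgebra $C(F^{(i)}_1)$ already generate the target, and each generator is hit by $C_{\mathrm c}(K)$. For injectivity and for the claim that $\Phi$ extends to the reduced C$^\ast$-algebra, I would invoke the canonical conditional expectation: since $K = \coprod_i F^{(i)}_1 \times (\mathbb{Z}_{N_i} \rtimes \mathbb{Z}_{N_i})$ as noted in Remark \ref{Rem:fd}, the groupoid is a compact principal (elementary) groupoid, its reduced C$^\ast$-algebra is the completion of $C_{\mathrm c}(K)$ with respect to the norm coming from the regular representation, and the map $\Phi$ intertwines the canonical expectation $E$ on ${\rm C}^\ast_{\mathrm r}(K)$ with the block-diagonal trace-like expectation $\bigoplus_i (\text{normalized matrix trace}) \otimes \id$ onto $\bigoplus_i C(F^{(i)}_1)$; faithfulness of both expectations then forces $\Phi$ to be isometric on $C_{\mathrm c}(K)$ and hence to extend to a $\ast$-isomorphism. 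Alternatively, and perhaps more cleanly, I would identify ${\rm C}^\ast_{\mathrm r}(K)$ with $\bigoplus_i {\rm C}^\ast_{\mathrm r}\bigl(F^{(i)}_1 \times (\mathbb{Z}_{N_i}\rtimes\mathbb{Z}_{N_i})\bigr)$ and recall that for the transitive principal groupoid $\mathbb{Z}_{N}\rtimes\mathbb{Z}_{N}$ on $N$ points one has ${\rm C}^\ast_{\mathrm r} \cong M_N(\mathbb{C})$, so that tensoring with $C(F^{(i)}_1)$ gives $M_{N_i}(C(F^{(i)}_1))$.

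The main obstacle I expect is purely bookkeeping: keeping the identification maps $\tau_{V^{(i)}_{j,1}} \colon F^{(i)}_1 \to F^{(i)}_j$ straight so that the matrix multiplication in $M_{N_i}(C(F^{(i)}_1))$ matches the convolution in ${\rm C}^\ast_{\mathrm r}(K)$ — i.e.\ verifying that $(g h)|_{V^{(i)}_{j,l}}$, transported to $F^{(i)}_1$, equals $\sum_k \bigl(g|_{V^{(i)}_{j,k}}\bigr)\bigl(h|_{V^{(i)}_{k,l}}\bigr)$ after transport, which amounts to unwinding the convolution formula $(gh)(x) = \sum_{yz=x} g(y)h(z)$ over the $K$-set decomposition. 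There is no conceptual difficulty here — it is the explicit form of Green's imprimitivity theorem in the simplest possible case — but the indices must be handled with care. Once the matrix-unit computation is done, extending from $C_{\mathrm c}(K) = C(K)$ to the reduced C$^\ast$-algebra is immediate because the target is already a C$^\ast$-algebra and $\Phi$ is a $\ast$-isomorphism of the dense $\ast$-subalgebra onto the dense $\ast$-subalgebra $\bigoplus_i M_{N_i}(C(F^{(i)}_1))$ of itself; more precisely, the reduced norm on $C_{\mathrm c}(K)$ is characterized by admitting the faithful expectation $E$, and $\Phi$ transports $E$ to the canonical faithful expectation on $\bigoplus_i M_{N_i}(C(F^{(i)}_1))$, so by uniqueness of the reduced completion $\Phi$ is isometric.
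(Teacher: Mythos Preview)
Your approach is essentially identical to the paper's: both define $\Phi$ explicitly via the matrix units $\chi_{V^{(i)}_{j,k}}$ and the transport homeomorphisms $\tau_{V^{(i)}_{j,1}}$, with the paper simply declaring that ``direct computations show that $\Phi$ is a $\ast$-isomorphism'' where you spell out the matrix-unit relations and the bookkeeping. One small correction: under $\Phi$ the canonical expectation $E$ corresponds to the conditional expectation onto the \emph{diagonal} subalgebra $\bigoplus_i \mathrm{Diag}_{N_i}(C(F^{(i)}_1))$ (restriction to diagonal entries), not to the normalized matrix trace onto $\bigoplus_i C(F^{(i)}_1)$; since the diagonal expectation is equally faithful your isometry argument goes through unchanged, and in any case your alternative route via $K\cong\coprod_i F^{(i)}_1\times(\mathbb{Z}_{N_i}\rtimes\mathbb{Z}_{N_i})$ and ${\rm C}^\ast_{\mathrm r}(\mathbb{Z}_N\rtimes\mathbb{Z}_N)\cong M_N(\mathbb{C})$ is clean and avoids the issue entirely.
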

\begin{proof}
For each $i\in \{1, \ldots, n\}$ and $j, k\in \{1, \ldots, N_i\}$, let $V_{j, k}^{(i)}$ be the unique compact open $K$-set
satisfying $r(V_{j, k}^{(i)})=F^{(i)}_{j}$ and $s(V_{j, k}^{(i)})=F^{(i)}_{k}$.
We then have
\[K=\bigsqcup_{i=1}^n \bigsqcup_{1 \leq j, k \leq N_i} V^{(i)}_{j, k}.\]
This shows that the set
\[\left\{ f\chi_{V_{j, k}^{(i)}}: 1\leq i\leq n, 1\leq j, k\leq N_i, f\in C(F^{(i)}_{j})\right\}\]
linearly spans $C(K)={\rm C}^\ast _{\mathrm r}(K)$.
For each $i \in \{1, \ldots, n\}$, let $(e_{j, k}^{(i)})_{1 \leq j, k\leq {N_i}}$ denote
the standard matrix units of
$M_{N_i}(\mathbb{C}) \subset M_{N_i}(C(F^{(i)}_1))\subset \bigoplus_{i=1}^n M_{N_i}(C(F^{(i)}_1))$.
Here the first inclusion is the canonical unital one.
Define a linear map
\[\Phi\colon {\rm C}^\ast _{\mathrm r}(K) \rightarrow \bigoplus_{i=1}^n M_{N_i}(C(F^{(i)}_1))\]
to be
\[\Phi(f\chi_{V_{j, k}^{(i)}}):= (f\circ \tau_{V_{j, 1}^{(i)}})\cdot e_{j, k}^{(i)}\]
for $i, j, k$, and $f\in C(F^{(i)}_{j})$.
Then direct computations show
that $\Phi$ is a $\ast$-isomorphism.
\end{proof}
Before going to the proof, we introduce a notation.
Suppose $G$ is an essentially principal groupoid and let $\varphi \in [[G]]$ be given.
Take a compact open $G$-set $V$ satisfying $\varphi=\tau _V(=r \circ (s|_V)^{-1})$.
Since $G$ is essentially principal, the choice of $V$ is unique.
We then define $u_\varphi :=\chi_V \in C_{\rm c}(G)$.
A direct computation shows that $u_\varphi$ is a unitary element of ${\rm C}^\ast _{\mathrm r}(G)$.
\begin{proof}[Proof of the Main Theorem]
By Lemma \ref{Lem:prob2}, the reduced groupoid \Cs -algebra ${\rm C}^\ast _{\mathrm r}(G)$
admits a faithful tracial state.
Hence, thanks to Proposition 3.2 of \cite{Ror91}, it suffices to show that
any two-sided zero divisor of ${\rm C}^\ast_{\mathrm r}(G)$ is contained in $\overline{GL}({\rm C}^\ast _{\mathrm r}(G))$,
where we denote by $\overline{GL}(A)$ the norm closure of the group of invertible elements of a \Cs-algebra $A$.

Let $a$ be a two-sided zero divisor of ${\rm C}^\ast_{\mathrm r}(G)$.
We may assume $a$ is of norm one.
Let $\epsilon >0$ be given.
Then by Lemma \ref{Lem:zerodiv}, there are unitary elements $u, v$ of ${\rm C}^\ast_{\mathrm r}(G)$ and
nonempty clopen subsets $U, V \subset G^{(0)}$ satisfying 
$\|\chi_U uav\|<\epsilon$ and $\|uav\chi_V\|< \epsilon$.
By Lemmas \ref{Lem:div} and \ref{Lem:com}, one can find a nonempty clopen subset $U_0$ of $U$ and $\varphi \in [[G]]$ satisfying $\varphi(U_0)\subset V$. 
Therefore, by replacing $U$ by $U_0$ and replacing $v$ by $vu_\varphi ^\ast$, we may assume $U=V$.
Choose $b_0\in C_{\mathrm c}(G)$ satisfying
$\|b_0\|\leq 1$ and $\|uav-b_0\|<\epsilon$.
Set \[b:=b_0-\chi_U b_0 -b_0\chi_U +\chi_U b_0\chi_U.\]
Since $\chi_U$ is a projection,
we have $\chi_U b=b\chi_U=0$.

By Lemma \ref{Lem:div}, one can find a nonempty clopen subset $U_1$ of $U$
satisfying $\mu(U_1) < \frac{1}{2} \mu(U)$ for all $\mu \in M(G)$.
Define \[\rho:=\inf\{\mu(U_1) : \mu\in M(G)\}.\]
Note that $\rho$ is a positive number since $M(G)$ is nonempty
(by Lemma \ref{Lem:div}) and weak-$\ast$ compact.

Take a compact open neighborhood $B$ of $\supp(b)$.
We claim that there is an elementary subgroupoid $K$ of $G$
satisfying the inequalities 
\[\mu(s(B\setminus K)) <\mu(U_1) < \frac{1}{2}\mu(U)\]
for all $\mu \in M(K)$.
To lead to a contradiction, assume that such a $K$ does not exist.
Set \[\Lambda:=\{(C, \eta): C\subset G {\rm\ compact}, \eta>0\}.\]
We equip $\Lambda$ with the partial order $\preceq $
defined by $(C, \eta)\preceq (C', \eta')$ if they satisfy the relations $C\subset C'$ and $\eta \geq \eta'$.
This makes $\Lambda$ a directed set.
For each $\lambda=(C, \eta) \in \Lambda$,
take a $(C, \eta)$-invariant elementary subgroupoid $K_\lambda$ of $G$.
Since $B$ is realized as the union of finitely many compact $G$-sets,
by Lemma \ref{Lem:prob1}, for sufficiently large $\lambda \in \Lambda$,
we have
\[\mu(s(B\setminus K_\lambda)) \leq \frac{1}{2}\rho\]
for all $\mu \in M(K_\lambda)$.
Therefore, without loss of generality, we may assume that
each $K_{\lambda}$ satisfies the above inequality.
By assumption, for each $\lambda\in \Lambda$, we must have a $K_\lambda$-invariant probability measure $\mu_\lambda$ satisfying
either \[\mu_\lambda(U_1) \leq \mu_\lambda(s(B\setminus K_\lambda)) \leq \frac{1}{2} \rho\] or \[\mu_\lambda(U_1)\geq\frac{1}{2}\mu_\lambda(U).\]
By Lemma \ref{Lem:prob2}, any weak-$\ast$ cluster point of
the net $(\mu_\lambda)_{\lambda\in \Lambda}$ is $G$-invariant.
Since both $U_1$ and $U$ are clopen, the above condition yields
that either $\mu(U_1) \leq \frac{1}{2}\rho$ or $\mu(U_1)\geq \frac{1}{2}\mu(U)$ holds for any weak-$\ast$ cluster point $\mu$ of $(\mu_\lambda)_{\lambda \in \Lambda}$.
In either case, this is a contradiction.

Take an elementary subgroupoid $K$ of $G$ satisfying the inequalities
\[\mu(s(B\setminus K)) < \mu(U_1) < \frac{1}{2}\mu(U)\]
for all $\mu \in M(K)$. Note that the inequalities trivially implies 
\[\mu(s(B\setminus K)\setminus U_1) <\mu(U_1)< \frac{1}{2}.\]
Since $s(B\setminus K)$ is a clopen subset of $G^{(0)}$,
by Lemma \ref{Lem:com}, one can find a clopen subset $V_1$ of $G^{(0)}\setminus U_1$
which is equivalent to $U_1$ in $K$
and satisfies
\[s(B\setminus K)\subset U_1\sqcup V_1.\]
Then observe that for any $\mu \in M(K)$,
\[\mu(U\setminus(U_1\sqcup V_1)) \geq \mu(U)-2\mu(U_1) >0.\] 
This implies that the clopen set $U\setminus (U_1\sqcup V_1)$
contains a fundamental domain of $K$.

Take an element $\psi \in [[G]]$ satisfying $\psi(U_1) =V_1$, $\psi(V_1)=U_1$, and $\psi(s)=s$ for all $s\in G^{(0)}\setminus (U_1 \sqcup V_1)$.
(To find such a $\psi$, first take a compact open $G$-set $D$
satisfying $r(D)=U_1$, $s(D)=V_1$.
Define \[E:= D \sqcup D^{-1} \sqcup (G^{(0)}\setminus (U_1 \sqcup V_1)).\]
Then a direct computation shows that $E$ is a compact open $G$-set
with $r(E)=s(E)=G^{(0)}$. Clearly $\psi:=\tau_E \in [[G]]$ satisfies the desired condition.)

Set $p:=\chi_{V_1}, q:= \chi_{U_1}$, and $w:=u_\psi$.
Set $W:=G^{(0)}\setminus (U_1\sqcup V_1)$ and $e:=1-p-q$.
Note that $we=ew=e$.
By the relation
\[s(\supp(b)\setminus K) ( \subset s(B\setminus K))\subset U_1\sqcup V_1,\]
we have $ebe \in {\rm C}^\ast_{\mathrm r}(K_W)$.
We also have $pwb=wqb=0$ and $wbq=0$.
Therefore the matrix form of the element $wb$
with respect to the decomposition $1=p+e+q$ is of the form
\[wb=\begin{pmatrix}
0 & 0 & 0\\
\ast & d & 0 \\
\ast & \ast & 0 \\
\end{pmatrix},\]
where $d:=ewbe=ebe$.

We claim that $d \in \overline{GL}({\rm C}^\ast_{\mathrm r} (G_W))$.
This yields that
$wb$, hence $b$, is contained in $\overline{GL}({\rm C}^\ast_{\mathrm r} (G))$.
Since \[\|b-uav\|\leq \|b-b_0\|+\|b_0-uav\|<7\epsilon,\]
this completes the proof.

To prove the claim, take a fundamental domain $Z$ of $K$ contained in $U\setminus (U_1 \sqcup V_1)$.
Since $Z$ is contained in $W$,
it is also a fundamental domain of $K_W$.
Note that since $Z \subset U$, we have
$\chi_Z b=b\chi_Z=0$.
Take a partition $W=\bigsqcup_{i=1}^n \bigsqcup_{k=1}^{N_i} Z^{(i)}_k$
of $W$ as in Definition \ref{Def:fd} for $K_W$ with $\bigsqcup_{i=1}^n Z^{(i)}_1=Z$.
Put $N:=\max\{N_i: i=1, \ldots, n\}$.
By Lemmas \ref{Lem:full} and \ref{Lem:div}, for each $i$, one can find a partition
\[Z^{(i)}_1=\bigsqcup_{j=1}^{M_i} \bigsqcup_{k=1}^{L_{i, j}}Z^{(i, j)}_k\]
of $Z^{(i)}_1$ by clopen subsets satisfying $Z^{(i, j)}_k\sim Z^{(i, j)}_l$ in $G_W$ and $L_{i,j}\geq N$ for all $i,j,k, l$.
For each $(i, j)$, take a compact open $G_W$-set $W_{i, j}$
satisfying the following conditions.
\begin{enumerate}
\item $r(W_{i, j}) = s(W_{i, j})= \bigsqcup_{k=1}^{L_{i, j}}Z^{(i, j)}_k$.
\item $\tau_{W_{i, j}}(Z^{(i, j)}_k)=Z^{(i, j)}_{k+1}$ for all $k$ (mod $L_{i, j}$).
\item The ${L_{i, j}}$-th power $(\tau_{W_{i, j}})^{L_{i, j}}$ is equal to the identity map on $\bigsqcup_{k=1}^{L_{i, j}}Z^{(i, j)}_k.$
\end{enumerate}
Define $L$ to be the (open) subgroupoid of $G_W$
generated by $K_W$ and $\bigcup_{i, j} W_{i, j}$.
Since $Z$ is a fundamental domain of $K_W$,
it is not hard to show that the $L$ is compact and
that the union $\bigsqcup_{i, j}Z^{(i, j)}_1$
is a fundamental domain of $L$.
Take a $\ast$-isomorphism
\[\Phi \colon {\rm C}^\ast _{\mathrm r}(L) \rightarrow \bigoplus_{i, j} M_{N_iL_{i, j}}(C(Z^{(i, j)}_1))\]
as in the proof of Lemma \ref{Lem:imp}.
The equalities $\chi_Z d =d \chi_Z=0$
show that $\Phi(d)$ is unitary equivalent to an element of the form
\[\bigoplus_{i, j}{\rm diag}(0_{L_{i, j}}, c_{i, j}),\]
where $c_{i, j}\in M_{L_{i, j}(N_i-1)}(C(Z^{(i, j)}_1))$ for each $(i, j)$.
Moreover, since $d\in {\rm C}^\ast _{\mathrm r}(K_W)$,
for each $i, j, k$, the $d$ commutes with the characteristic function of ${r(K_W Z^{(i, j)}_k)}$ (a central element in ${\rm C}^\ast _{\mathrm r}(K_W)$), hence with that of ${r(K_W Z^{(i, j)}_k)}\cap(W\setminus Z)$.
Since $Z$ is a fundamental domain of $K_W$, for each $i, j, k$, the intersection $r(K_W Z^{(i, j)}_k)\cap(W\setminus Z)$ decomposes into the disjoint union of $(N_i-1)$ clopen
subsets of $G^{(0)}$ each of which is equivalent to $Z^{(i, j)}_k$ in $K_W$ (hence in $L$).
This proves that $c_{i, j}$ is unitary equivalent to an element of the form
\[{\rm diag}(c_{i, j, 1}, \ldots, c_{i, j, L_{i, j}}),\] where $c_{i, j, k}\in M_{N_i -1}(C(Z^{(i, j)}_1))$ for all $k$.
Since $L_{i,j}\geq N>N_i -1$ for all $i, j$, Lemma 4.2 of \cite{EHT}
now proves
\[d \in \overline{GL}({\rm C}^\ast_{\mathrm r} (L))\subset \overline{GL}({\rm C}^\ast_{\mathrm r} (G_W)).\]
\end{proof}
\begin{Rem}\label{Rem:rr0}
The Main Theorem and Theorem \ref{Thm:srK} show that the reduced groupoid C$^\ast$-algebra of
an minimal almost finite groupoid has cancellation of projections.
From this, after minor modifications, one can adapt the arguments in Sections 3 and 4 of \cite{Phi05} to minimal almost finite totally disconnected groupoids.
Consequently, for such a groupoid $G$,
the reduced groupoid \Cs -algebra ${\rm C}^\ast _{\mathrm r}(G)$
has real rank zero and strict comparison.
\end{Rem}
\section{Applications of Main Theorem to topological dynamical systems}\label{Sec:app}
In this section, we give applications of the Main Theorem.
We first show the following permanence property
which is useful in applications of the Main Theorem.

\begin{Lem}\label{Lem:quo}
Let $G$ be a groupoid which admits a quotient homomorphism
$\pi$ from $G$ onto an almost finite groupoid $H$ with the following properties:
 $\pi$ is proper, and
the restriction map $\pi|_{Gs}\colon Gs\rightarrow H\pi(s)$ is bijective for each $s\in G^{(0)}$.
Then $G$ is almost finite.
\end{Lem}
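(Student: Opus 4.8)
The plan is to transport the defining conditions of almost finiteness along the quotient map $\pi$. The key point is that the hypotheses on $\pi$ — properness and the bijectivity of $\pi|_{Gs}\colon Gs \to H\pi(s)$ for each $s$ — give us a tight dictionary between the ``local'' combinatorial quantities $\sharp(Cs)$ upstairs and $\sharp(\pi(C)\pi(s))$ downstairs, and between elementary subgroupoids of $H$ and of $G$. First I would verify condition (1): since the union of all compact open $H$-sets covers $H$ and $\pi$ is a local homeomorphism (étale quotient homomorphism) that is proper, the preimage of a compact open $H$-set decomposes into finitely many compact open $G$-sets, and these cover $G$ because $\pi$ is surjective. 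So (1) is essentially formal.

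The substance is condition (2). Given a compact $C \subset G$ and $\epsilon > 0$, set $\tilde C := \pi(C) \subset H$, which is compact by continuity. Apply almost finiteness of $H$ to get a $(\tilde C, \epsilon)$-invariant elementary subgroupoid $L$ of $H$. The plan is to pull $L$ back: define $K := \pi^{-1}(L)$. One must check that $K$ is an elementary subgroupoid of $G$ — i.e., it contains $G^{(0)}$ (clear, since $L \supset H^{(0)}$ and $\pi(G^{(0)}) \subset H^{(0)}$), it is compact (here properness of $\pi$ is used, as $L$ is compact), and it admits a fundamental domain. For the fundamental domain, I would take a partition of $L^{(0)}$ as in Definition \ref{Def:fd} with its compact open $L$-sets $V^{(i)}_{j,k}$, pull everything back by $\pi$, and use the local homeomorphism property to cut the preimages into finitely many compact open $G$-sets; the bijectivity of $\pi|_{Gs}$ on orbits is what guarantees that the pulled-back pieces still assemble into the required partition structure with unique connecting $K$-sets. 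Then, for the invariance estimate, fix $s \in G^{(0)}$ and observe that $\pi$ restricts to a bijection $Ks \to L\pi(s)$ (since $Ks = \pi^{-1}(L)s \subset Gs$ and $\pi|_{Gs}$ is injective with image $H\pi(s) \supset L\pi(s)$, and surjectivity onto $L\pi(s)$ follows because $K = \pi^{-1}(L)$), and likewise $\pi$ maps $CKs \setminus Ks$ injectively into $\tilde C L\pi(s) \setminus L\pi(s)$. Hence
\[
\frac{\sharp(CKs \setminus Ks)}{\sharp(Ks)} \leq \frac{\sharp(\tilde C L\pi(s) \setminus L\pi(s))}{\sharp(L\pi(s))} < \epsilon,
\]
so $K$ is $(C,\epsilon)$-invariant, completing (2).

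\textbf{The main obstacle} I anticipate is verifying rigorously that $K = \pi^{-1}(L)$ genuinely admits a fundamental domain — that is, that pulling back the partition data of Definition \ref{Def:fd} through a proper étale quotient homomorphism produces again a clopen partition with \emph{unique} connecting compact open $K$-sets and the disjoint-union decomposition $K = \bigsqcup V^{(i)}_{j,k}$ of condition (3). Properness ensures finiteness of the fibers and hence that each pulled-back clopen set is a finite disjoint union of clopen sets of $G^{(0)}$; the orbitwise bijectivity $\pi|_{Gs} \colon Gs \to H\pi(s)$ is precisely what rules out ``collapsing'' or ``branching'' that would destroy the uniqueness of the connecting $K$-sets and the partition property. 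Once that structural check is in place, the invariance estimate and condition (1) are routine, and the lemma follows.
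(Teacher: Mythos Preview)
Your proposal is correct and follows exactly the paper's approach: pull back an invariant elementary subgroupoid $L$ of $H$ to $K=\pi^{-1}(L)$ and use the fiberwise bijectivity of $\pi$ to transfer both the fundamental-domain structure and the invariance estimate. One small simplification: you do not need $\pi$ to be a local homeomorphism for condition~(1) --- the preimage $\pi^{-1}(V)$ of a compact open $H$-set $V$ is already a single compact open $G$-set (properness gives compactness, and the orbitwise injectivity of $\pi$ forces $r$ and $s$ to be injective on $\pi^{-1}(V)$), so no further decomposition is needed.
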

Typical examples of groupoids satisfying the assumptions in Lemma \ref{Lem:quo} come from factor maps of topological dynamical systems. Consider compact spaces $X$ and $Y$ equipped with a $\Gamma$-action.
Then any $\Gamma$-equivariant quotient map $\pi \colon X\rightarrow Y$
extends to a quotient homomorphism
$\pi \colon X \rtimes _\alpha \Gamma \rightarrow Y \rtimes _\beta \Gamma$
by the formula $\pi(s, x):=(s, \pi(x))$.
This homomorphism obviously satisfies the two conditions in Lemma \ref{Lem:quo}.
\begin{proof}[Proof of Lemma \ref{Lem:quo}]
By the assumptions on $\pi$,
for any elementary subgroupoid $K$ of $H$,
the preimage $\pi^{-1}(K)$ is an elementary subgroupoid of $G$.
Furthermore, for any compact subset $C\subset H$ and $\epsilon>0$,
$K$ is $(C, \epsilon)$-invariant if and only if $\pi^{-1}(K)$ is $(\pi^{-1}(C), \epsilon)$-invariant.
\end{proof}
We now deal with topological dynamical systems of amenable groups.
We first recall some terminologies related to partitions of underlying spaces
of group actions (cf.~\cite{Ker}, beginning of Section 4).
Let $\alpha$ be an action of $\Gamma$ on $X$.
A tower of $\alpha$ is a pair $(S, B)$ of a nonempty finite subset $S\subset \Gamma$
and a nonempty subset $B\subset X$ such that the sets $sB$; $s\in S$, are pairwise disjoint.
The set $S$ is called the shape of the tower $(S, B)$.
A tower decomposition of $\alpha$ is a sequence $(S_1, B_1), \ldots, (S_n, B_n)$ of towers
satisfying $X= \bigsqcup_{i=1}^n S_iB_i$.
The sets $S_1, \ldots, S_n$ are called the shapes of the tower decomposition $((S_i, B_i))_{i=1}^n$.
Let $F$ be a finite subset of $\Gamma$ and let $\epsilon>0$.
A finite subset $S$ of $\Gamma$ is said to be $(F, \epsilon)$-invariant if
it satisfies
\[\sharp(FS\setminus S)<\epsilon \sharp S.\]
A tower decomposition $((S_i, B_i))_{i=1}^n$ is said to be $(F, \epsilon)$-invariant
if all the shapes $S_1, \ldots, S_n$ are $(F, \epsilon)$-invariant.
A tower decomposition $((S_i, B_i))_{i=1}^n$ is said to be clopen if each $B_i$ is clopen in $X$.

\begin{Lem}\label{Lem:cas}
Let $\alpha$ be an action of $\Gamma$ on $X$.
Let $F\subset \Gamma$ be a finite subset and let $\epsilon>0$.
Then the following conditions are equivalent.
\begin{enumerate}[\upshape(1)]
\item The groupoid $X\rtimes_{\alpha} \Gamma$ admits an $(F\times X, \epsilon)$-invariant elementary subgroupoid.
\item
The action $\alpha$ admits an $(F, \epsilon)$-invariant clopen tower decomposition.
\item
The action $\alpha$ admits an $(F, \epsilon)$-invariant dynamical tiling in the sense of Definition 6.1 in \cite{DZ}.
\end{enumerate}
Consequently, the following conditions are equivalent.
\begin{enumerate}[\upshape(i)]
\item The transformation groupoid $X\rtimes_\alpha \Gamma$
is almost finite.
\item The action $\alpha$ admits
clopen tower decompositions with arbitrary invariance.
\item The action $\alpha$ has the tiling property in the sense of Definition 6.1 in \cite{DZ}.
\end{enumerate}
\end{Lem}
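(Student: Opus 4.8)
The plan is to prove the equivalence of (1), (2), (3) first, and then to read off the equivalence of (i), (ii), (iii) formally. The implication (2) $\Rightarrow$ (1) is the most explicit one. Given an $(F,\epsilon)$-invariant clopen tower decomposition $((S_i,B_i))_{i=1}^n$ of $\alpha$, I would enumerate $S_i=\{s^{(i)}_1,\dots,s^{(i)}_{N_i}\}$ with $N_i=\sharp S_i$, put $F^{(i)}_j:=s^{(i)}_jB_i$, and observe that $X=\bigsqcup_{i,j}F^{(i)}_j$ is a clopen partition. Then $K:=\bigsqcup_{i=1}^n\bigsqcup_{1\le j,k\le N_i}\bigl(\{s^{(i)}_j(s^{(i)}_k)^{-1}\}\times F^{(i)}_k\bigr)$ is a compact open subgroupoid of $X\rtimes_\alpha\Gamma$ containing the unit space, and one checks directly that the partition $X=\bigsqcup_{i,j}F^{(i)}_j$ together with the $K$-sets $V^{(i)}_{j,k}:=\{s^{(i)}_j(s^{(i)}_k)^{-1}\}\times F^{(i)}_k$ exhibits $F:=\bigsqcup_iF^{(i)}_1$ as a fundamental domain of $K$ in the sense of Definition \ref{Def:fd}. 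For $s\in F^{(i)}_k$ the set $Ks$ consists of the $N_i$ elements $(s^{(i)}_j(s^{(i)}_k)^{-1},s)$; reading $Ks$ and $(F\times X)Ks$ inside $\Gamma$ via the first coordinate, one gets $\sharp\bigl((F\times X)Ks\setminus Ks\bigr)/\sharp(Ks)=\sharp(FS_i\setminus S_i)/\sharp S_i<\epsilon$, so $K$ is $(F\times X,\epsilon)$-invariant.

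For (1) $\Rightarrow$ (2), I would start with an $(F\times X,\epsilon)$-invariant elementary subgroupoid $K$, with partition $X=\bigsqcup_{i,j}F^{(i)}_j$ and compact open $K$-sets $V^{(i)}_{j,k}$ as in Definition \ref{Def:fd}. Since every compact open subset of $\Gamma\times X$ is a finite disjoint union of sets $\{\gamma\}\times(\text{clopen})$, the homeomorphism $\tau_{V^{(i)}_{j,1}}\colon F^{(i)}_1\to F^{(i)}_j$ is, on each piece of a suitable clopen partition of $F^{(i)}_1$, a single group translation. Passing, for each $i$, to the common refinement of the partitions of $F^{(i)}_1$ coming from $V^{(i)}_{1,1},\dots,V^{(i)}_{N_i,1}$, one obtains finitely many clopen sets $B$ (each inside some $F^{(i)}_1$) on which every $\tau_{V^{(i)}_{j,1}}$ is of the form $x\mapsto s^{(i)}_j(B)x$. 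The pairs $\bigl(\{s^{(i)}_1(B),\dots,s^{(i)}_{N_i}(B)\},B\bigr)$ are then towers --- their columns $s^{(i)}_j(B)B$ lie in the pairwise disjoint $F^{(i)}_j$ --- and they tile $X$; for $s\in B$ the shape equals $Ks$ read inside $\Gamma$, so the $(F\times X,\epsilon)$-invariance of $K$ at $s$ is exactly $(F,\epsilon)$-invariance of that shape. This gives (1) $\Leftrightarrow$ (2).

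The equivalence (2) $\Leftrightarrow$ (3) is a matter of comparing definitions: replacing a tower $(S_i,B_i)$ by $(S_it^{-1},tB_i)$ for some $t\in S_i$ changes neither the columns nor the $(F,\epsilon)$-invariance, so one may assume $e\in S_i$ for all $i$, and then an $(F,\epsilon)$-invariant clopen tower decomposition is precisely an $(F,\epsilon)$-invariant dynamical tiling in the sense of Definition 6.1 of \cite{DZ}. The ``consequently'' part is then formal: the unit space of $X\rtimes_\alpha\Gamma$ is covered by the compact open $G$-sets $\{\gamma\}\times X$, so condition (1) of almost finiteness holds automatically here, and every compact subset of $\Gamma\times X$ is contained in $F\times X$ for some finite $F\subset\Gamma$, with $(F\times X,\epsilon)$-invariance implying $(C,\epsilon)$-invariance for every $C\subset F\times X$. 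Hence $X\rtimes_\alpha\Gamma$ is almost finite iff for all finite $F\subset\Gamma$ and $\epsilon>0$ it admits an $(F\times X,\epsilon)$-invariant elementary subgroupoid, which by the first part is (ii), and likewise (iii).

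I expect the main obstacle to be the direction (1) $\Rightarrow$ (2): one has to refine the fundamental domain so that the partial homeomorphisms implementing the $V^{(i)}_{j,1}$ become honest group translations, while simultaneously verifying that the local invariance ratio $\sharp((F\times X)Ks\setminus Ks)/\sharp(Ks)$ is identified with $\sharp(FS\setminus S)/\sharp S$ for the shape $S$ attached to the clopen piece containing $s$. The remaining steps are bookkeeping or definition-chasing.
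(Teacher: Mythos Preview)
Your proposal is correct and follows essentially the same approach as the paper: the paper builds the same elementary subgroupoid $K=\bigsqcup_i\bigsqcup_{s,t\in S_i}\{st^{-1}\}\times tB_i$ from a tower decomposition, performs the same refinement (so that each $V^{(i)}_{j,k}$ has the form $\{s\}\times U$) to go back, and identifies clopen tower decompositions with dynamical tilings after the same normalization $e\in S_i$. The only notable difference is that the paper spells out the map $\mathcal{T}\colon X\to\Delta^\Gamma$ explicitly for (2)$\Leftrightarrow$(3), whereas you assert the identification directly; conversely, your treatment of the ``consequently'' part is more explicit than the paper's.
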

\begin{proof}
We first show the equivalence of (1) and (2).
Put $G:= X\rtimes_\alpha \Gamma$.
To prove the claim, we first give a correspondence
between elementary subgroupoids of $G$ and clopen tower decompositions of $\alpha$.

Let $K$ be an elementary subgroupoid of $G$.
Take a partition
\[K=\bigsqcup_{i=1}^n \bigsqcup_{1\leq j, k \leq N_i} V^{(i)}_{j, k}\]
of $K$ as in Definition \ref{Def:fd}.
By discarding empty sets, we may assume that each $V^{(i)}_{j, k}$ is nonempty.
Also, by refining the partition if necessary,
we may assume that each $V^{(i)}_{j, k}$ is of the form $\{ s\} \times U$; $s\in \Gamma$, $U\subset X$.
Put $B_i:=V^{(i)}_{1, 1}$ for each $i$.
Let $\pi \colon G \rightarrow \Gamma$
denote the projection onto the first coordinate.
Define
\[S_i:=\pi(s^{-1}(B_i) \cap K) \subset \Gamma\] for each $i$.
Then it is clear from the definition
that the sequence $((S_i, B_i))_{i=1}^n$ is a clopen tower decomposition of $\alpha$.

Conversely, if we have a clopen tower decomposition $((S_i, B_i))_{i=1}^n$ of $\alpha$,
then the formula
\[K:=\bigsqcup_{i=1}^n \bigsqcup_{s, t\in S_i} \{st^{-1}\}\times tB_i\]
defines an elementary subgroupoid of $G$.

We observe that, up to refinements of clopen tower decompositions,
these correspondences are each other's inverse.
Now one can show that a clopen tower decomposition is $(F, \epsilon)$-invariant
if and only if the corresponding elementary subgroupoid is $(F, \epsilon)$-invariant.
In particular the conditions (1) and (2) are equivalent.

We next show the equivalence of (2) and (3).
To prove this, we give a correspondence
between clopen tower decompositions of $\alpha$ and dynamical tilings of $\alpha$.

First let $(S_1, B_1), \ldots, (S_n, B_n)$ be a clopen tower decomposition of $\alpha$.
By replacing $(S_i, B_i)$ by $(S_i s_i^{-1}, s_i B_i)$ for some $s_i \in S_i$ if necessary,
we may assume that each $S_i$ contains the unit $e$.
By replacing the terms $(S_i, B_i), (S_j, B_j)$ with $S_i =S_j$
by $(S_i, B_i \cup B_j)$ if necessary,
we may also assume that $S_1, \ldots, S_n$ are pairwise distinct.
Set $\mathcal{S}:=\{ S_1, \ldots, S_n\}$, $\Delta:= \mathcal{S} \cup \{ 0 \}$.
We define $\mathcal {T} \colon X \rightarrow \Delta ^\Gamma$ as follows.
For $x\in X$ and $s\in \Gamma$, set
\[\mathcal{T}_x(s):=\left\{ \begin{array}{ll}
S_i & {\rm~ when~} sx \in B_i,\\
0 & {\rm otherwise}.\\
\end{array} \right.
\]
(Note that $\Delta ^\Gamma$ is equipped with the right shift $\Gamma$-action.)
Since each $B_i$ is clopen, $\mathcal{T}$ is continuous.
As $(S_1, B_1), \ldots, (S_n, B_n)$ form a tower decomposition of $\alpha$,
each $\mathcal{T}_x$ associates a tiling of $\Gamma$ (see Definition 2.13 of \cite{DZ} for the definition).
The $\Gamma$-equivariance of $\mathcal{T}$ is clear from the definition.
We thus obtain a dynamical tiling $\mathcal{T}$ from a clopen tower decomposition.

Conversely, suppose we have a dynamical tiling $\mathcal{T}$ of $\alpha$
with shapes in $\mathcal{S}:= \{S_1, \ldots, S_n\}$.
For each $i$,
set \[B_i := \{ x\in X : \mathcal{T}_x(e)= S_i\}.\]
Then, by the continuity of $\mathcal{T}$, each $B_i$ is clopen.
Moreover, as each $\mathcal{T}_x$ is a tiling of $\Gamma$ with shapes in $\mathcal{S}$,
the sequence $(S_1, B_1), \ldots, (S_n, B_n)$
forms a tower decomposition of $\alpha$.

Obviously these correspondences preserve $(F, \epsilon)$-invariance.
We therefore conclude the equivalence of (2) and (3).
\end{proof}

Now one can prove Corollary \ref{Corint:abe}.
\begin{proof}[Proof of Corollary \ref{Corint:abe}]
We show that for any free (not necessary minimal) action
$\alpha$ of a local subexponential group $\Gamma$
on a totally disconnected space $X$, the transformation groupoid $X \rtimes _\alpha \Gamma$
is almost finite.
Once this claim is proved, the statement of Corollary \ref{Corint:abe} follows from the Main Theorem and Lemma \ref{Lem:quo}.

To show the claim, we first take an increasing net $(\Gamma_i)_{i\in I}$
of countable subgroups of $\Gamma$
whose union is $\Gamma$.
Obviously, when each $X \rtimes_\alpha \Gamma_i$ is almost finite,
then so $X \rtimes _\alpha \Gamma$ is.
Hence it suffices to show the claim when $\Gamma$ is countable.

In this case, by Lemma 3.1 of \cite{Suz17} (cf.~\cite{RS}),
one can find a factor $\beta$ of $\alpha$
which is free and whose underlying space is metrizable.
Now the almost finiteness of $X \rtimes_\alpha \Gamma$
follows from Lemmas \ref{Lem:quo}, \ref{Lem:cas}, and Corollary 6.4 in \cite{DZ}.
\end{proof}

Kerr recently announced the following theorem (now available in \cite{Ker}).
Here we recall that a property of an action is said to be generic
if the property is valid on a $G_\delta$-dense set of an appropriate Polish space
of actions. See \cite{Ker} for the precise statement.
For backgrounds and history of this subject,
see the introduction of the paper \cite{Hoc}.
Other connections between this subject and \Cs-algebra theory
can be found in \cite{Ker}, \cite{Suz15}, \cite{Suz17}, \cite{Suz18}.
\begin{Thm}[\cite{Ker}, Theorem 4.2]\label{Thm:Ker}
For any countable amenable group,
its generic minimal free actions on the Cantor set
admit clopen tower decompositions with arbitrary invariance.
\end{Thm}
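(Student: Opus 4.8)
The plan is to derive the statement as a Baire category result. Work in the Polish space of all continuous actions of $\Gamma$ on the Cantor set $X$, with the topology in which two actions are close when they agree on a prescribed finite set of group elements over a prescribed clopen partition of $X$ (cf.~\cite{Hoc} for this framework). Freeness and minimality are each $G_\delta$ conditions: for minimality, for each nonempty clopen $U$ the set of actions $\alpha$ with $\bigcup_{g\in S}\alpha_g(U)=X$ for some finite $S\subset\Gamma$ is open, and minimality is the intersection of these over a countable basis of clopen sets; for freeness, for each $g\neq e$ the set of actions admitting a clopen partition $\mathcal{Q}$ with $\alpha_g(Q)\cap Q=\emptyset$ for all $Q\in\mathcal{Q}$ is open, and freeness is the intersection over $g\in\Gamma\setminus\{e\}$. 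Hence the minimal free actions form a $G_\delta$ set $\mathcal{M}$, Polish in the relative topology, and ``generic minimal free action'' means generic in $\mathcal{M}$. Fix an increasing exhaustion $\Gamma=\bigcup_n F_n$ by finite sets and $\epsilon_n\to 0$, and let $\mathcal{U}_n\subset\mathcal{M}$ be the set of actions admitting an $(F_n,\epsilon_n)$-invariant clopen tower decomposition. An action has clopen tower decompositions with arbitrary invariance exactly when it lies in $\bigcap_n\mathcal{U}_n$, which by Lemma \ref{Lem:cas} is also exactly when its transformation groupoid is almost finite. So it suffices to show each $\mathcal{U}_n$ is open and dense in $\mathcal{M}$.

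Openness is formal. If $\alpha\in\mathcal{U}_n$ is witnessed by a clopen tower decomposition $((S_i,B_i))_{i=1}^k$, then the defining data --- disjointness of $\{\,\alpha_s(B_i):s\in S_i\,\}$, the covering $X=\bigsqcup_i S_iB_i$, and the invariance $\sharp(F_nS_i\setminus S_i)<\epsilon_n\sharp S_i$ --- involve only the finitely many group elements in $\bigcup_i(S_i\cup F_nS_i)$ together with the clopen partition generated by the $B_i$. Any action agreeing with $\alpha$ on this finite datum therefore admits the same clopen tower decomposition, so $\mathcal{U}_n$ is open.

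The substance is the density of $\mathcal{U}_n$ in $\mathcal{M}$. Fix $\alpha\in\mathcal{M}$ and a basic clopen neighbourhood of $\alpha$ given by a finite $E\subset\Gamma$ and a clopen partition $\mathcal{P}$; I must find $\alpha'\in\mathcal{M}$ in this neighbourhood with $\alpha'\in\mathcal{U}_n$. Choose $(F,\epsilon)$ with $F\supseteq F_n$ and $\epsilon$ small enough that every $(F,\epsilon)$-invariant finite subset of $\Gamma$ is $(F_n,\epsilon_n)$-invariant. By a Rokhlin-type argument using minimality and freeness of $\alpha$, produce a clopen tower $(T,B)$ for $\alpha$ with $T$ large and $(F,\epsilon)$-invariant and with $X\setminus TB$ a clopen set that is uniformly small. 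The exact tiling theorem for countable amenable groups (Ornstein--Weiss, in the exact form of Downarowicz--Huczek--Zhang) tiles a slightly shrunk copy of $T$ by translates of finitely many $(F,\epsilon)$-invariant shapes $S_1,\dots,S_k$; reading each tile $S_ic\subseteq T$ as the tower $(S_i,\alpha_c(B))$ --- automatically clopen, with $\{\,\alpha_{sc}(B):s\in S_i\,\}$ disjoint because these sets lie among the disjoint family $\{\,\alpha_t(B):t\in T\,\}$ --- yields an $(F_n,\epsilon_n)$-invariant clopen \emph{partial} tower decomposition covering all of $TB$. Covering the thin clopen remainder cannot be done for $\alpha$ itself in general (this is precisely the failure of almost finiteness in the absence of comparison), so one perturbs: redefine $\alpha$ on the remainder and near the top of the tower to obtain $\alpha'$ which (i) is still minimal and free, (ii) still agrees with $\alpha$ on $E$ over $\mathcal{P}$ --- automatic, since $E$ operates only near the base of a tower whose shape $T$ is vastly more invariant than $E$ is large --- and (iii) is, at scale $(F,\epsilon)$, modelled on the Downarowicz--Huczek--Zhang tiling of $\Gamma$ itself, so that the partial decomposition above extends to a genuine $(F_n,\epsilon_n)$-invariant clopen tower decomposition of $\alpha'$. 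Then $\alpha'\in\mathcal{U}_n$ lies in the prescribed neighbourhood, so $\mathcal{U}_n$ is dense; Baire category over the countably many $\mathcal{U}_n$ completes the proof.

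The main obstacle is the density step, and within it the tension between ``arbitrary invariance'', which forces the tower shapes to be genuine F\o lner sets, and ``clopen and exactly covering $X$'', which for a general minimal free action requires a perturbation. The delicate points are: the Rokhlin-type production of large clopen towers in a minimal free Cantor action; the bookkeeping showing that the successive clopen remainders can be absorbed by iterating on a shrinking scale; and the verification that the perturbation simultaneously keeps the action minimal and free and leaves the prescribed finite germ untouched. This is exactly the technical content supplied by the amenable-group tiling machinery; by contrast, the openness of each $\mathcal{U}_n$ and the Baire-category wrapper are routine.
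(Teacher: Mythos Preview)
The paper does not prove this theorem; it is quoted from \cite{Ker} (Theorem~4.2 there) and used as a black box. So there is no proof in the present paper to compare your attempt against.

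On its own terms, your Baire-category scaffolding is correct and is indeed how such genericity statements are organized: one shows each $\mathcal{U}_n$ is open (your argument for this is fine) and dense in the Polish space $\mathcal{M}$ of minimal free actions. The gap is that your density argument is a plan, not a proof. The sentence ``redefine $\alpha$ on the remainder and near the top of the tower to obtain $\alpha'$'' hides essentially all of the content. Concretely, you have not specified (a)~what the perturbed homomorphism $\alpha'\colon\Gamma\to\mathrm{Homeo}(X)$ actually is --- for a general countable group one cannot simply ``redefine on some elements'' without either breaking the group relations or passing through a genuine construction; (b)~why $\alpha'$ remains minimal and free; (c)~why $\alpha'$ agrees with $\alpha$ on the prescribed germ $(E,\mathcal{P})$; and (d)~why the partial clopen tower decomposition you built for $\alpha$ extends to an exact one for $\alpha'$. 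Invoking ``the amenable-group tiling machinery'' does not discharge these: the Downarowicz--Huczek--Zhang theorem tiles the \emph{group} $\Gamma$, not the action, and promoting a group tiling to a clopen tower decomposition of a nearby action is precisely the work carried out in \cite{Ker}. As written, you have correctly identified the architecture and the location of the difficulty, but the load-bearing step remains an assertion.
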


Corollary \ref{Corint:ame} is an immediate consequence of the Main Theorem, Lemmas \ref{Lem:quo}, \ref{Lem:cas}, and Theorem \ref{Thm:Ker}.

Although the question on genericity no longer makes sense,
by adapting Theorem 4.3 of \cite{DHZ} to standard arguments of extensions,
one can prove the following statement.
\begin{Thm}\label{Thm:amecas}
Let $\Gamma$ be an amenable group.
Then there is a totally disconnected compact space $X$ with the following properties.
\begin{enumerate}[\upshape (1)]
\item $X$ has a clopen basis of cardinality at most $\sharp \Gamma$.
\item There is a minimal free action of $\Gamma$ on $X$ which admits
clopen tower decompositions with arbitrary invariance.
\end{enumerate}
\end{Thm}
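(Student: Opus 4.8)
The plan is to deduce the statement from its countable instance---which is essentially a quotation of \cite{DHZ}---by a product-and-minimal-subsystem construction. If $\Gamma$ is finite, take $X=\Gamma$ with the left translation action: it is minimal and free, its singletons form a clopen basis of cardinality $\sharp\Gamma$, and invariance is vacuous; so assume $\Gamma$ infinite. If $\Gamma$ is countable, Theorem 4.3 of \cite{DHZ} produces a free minimal action of $\Gamma$ on a zero-dimensional compact metrizable space admitting tilings with arbitrarily invariant shapes; such a space has no isolated points (the action is free and $\Gamma$ infinite), hence is the Cantor set, which has a countable clopen basis; and by Lemma \ref{Lem:cas}, admitting such tilings is the same as admitting clopen tower decompositions with arbitrary invariance. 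Since $\sharp\Gamma=\aleph_0$ here, (1) and (2) hold.

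For general $\Gamma$, fix a directed family $(\Gamma_i)_{i\in I}$ of countable subgroups with union $\Gamma$ and $\sharp I\le\sharp\Gamma$. The first step is to build, for each finite $F\subset\Gamma$ and each $m\in\mathbb{N}$, a totally disconnected $\Gamma$-system $Z_{F,m}$ of weight at most $\sharp\Gamma$ carrying a clopen tower decomposition whose shapes are $(F,1/m)$-invariant. Pick $i\in I$ with $F\subset\Gamma_i$. Applying the countable case to $\Gamma_i$ (or simply the exact tiling theory for countable amenable groups) yields a tiling of $\Gamma_i$ by finitely many $(F,1/m)$-invariant shapes; since $(F,\epsilon)$-invariance of a finite set depends only on that set and on $F$, the same shapes are $(F,1/m)$-invariant as subsets of $\Gamma$. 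Replicating this tiling across the right cosets of $\Gamma_i$ in $\Gamma$ (using right-translated copies of the same shapes) produces a tiling $\mathcal{T}$ of $\Gamma$ itself with the same shapes. Let $Z_{F,m}$ be a minimal subsystem of the shift orbit closure of $\mathcal{T}$ inside $\Delta^\Gamma$, with $\Delta$ the finite set of shapes together with $0$. Every point of $Z_{F,m}$ is again a tiling of $\Gamma$ with shapes among the finitely many chosen ones, so the correspondence between dynamical tilings and clopen tower decompositions from the proof of Lemma \ref{Lem:cas} endows $Z_{F,m}$ with an $(F,1/m)$-invariant clopen tower decomposition; and $Z_{F,m}\subset\Delta^\Gamma$ is totally disconnected of weight at most $\sharp\Gamma$.

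The second step glues these together. The pairs $(F,m)$ form an index set of cardinality $\sharp\Gamma\cdot\aleph_0=\sharp\Gamma$, so the product $\Gamma$-system $P:=\prod_{(F,m)}Z_{F,m}$ is totally disconnected of weight at most $\sharp\Gamma$; to it I would also adjoin one further factor $Z_{\mathrm{free}}$, a free minimal totally disconnected $\Gamma$-system of weight at most $\sharp\Gamma$, whose existence I would establish by a transfinite coloring argument extending the classical countable construction. Let $X$ be a minimal subsystem of $P\times Z_{\mathrm{free}}$. By minimality every coordinate projection out of $X$ is surjective; the projection onto $Z_{\mathrm{free}}$ forces $X$ to be free, and since the preimage of a clopen tower decomposition under a $\Gamma$-equivariant factor map is again a clopen tower decomposition with the same shapes, the projections onto the $Z_{F,m}$ equip $X$ with $(F,1/m)$-invariant clopen tower decompositions for every $(F,m)$, i.e.\ with clopen tower decompositions of arbitrary invariance. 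Thus $X$ is a totally disconnected compact space of weight at most $\sharp\Gamma$, giving (1), carrying a minimal free $\Gamma$-action with clopen tower decompositions of arbitrary invariance, which is exactly (2).

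The main obstacle is the freeness of $X$: everything else is either a citation (\cite{DHZ}, Lemma \ref{Lem:cas}) or routine bookkeeping with weights, but securing a free minimal totally disconnected $\Gamma$-action of weight at most $\sharp\Gamma$ for an uncountable amenable $\Gamma$---or, on the alternative route, arranging the coset-replicated tilings to be genuinely aperiodic so that the $Z_{F,m}$ themselves can be taken free---is exactly the ``standard argument of extensions'' that must be carried out with care.
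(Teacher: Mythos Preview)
Your argument follows essentially the same route as the paper's: pull tilings of finitely generated (hence countable) subgroups from \cite{DHZ}, replicate them across right cosets to tile all of $\Gamma$, assemble the resulting data into a single totally disconnected $\Gamma$-space of weight at most $\sharp\Gamma$ carrying clopen tower decompositions with arbitrary invariance, then pass to a free extension and restrict to a minimal closed invariant subset. The only packaging difference is that the paper does the assembly inside $\ell^\infty(\Gamma)$---pushing the tile indicators $\chi_{C_i}\in\ell^\infty(\Gamma_F)$ into $\ell^\infty(\Gamma)$ via the right-coset embedding, generating a $\Gamma$-invariant C*-subalgebra, and taking its Gelfand dual---whereas you build the same object as an explicit product of minimal subshifts $Z_{F,m}\subset\Delta^\Gamma$; the two constructions produce equivalent factors. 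The freeness step you rightly isolate as the delicate point is not argued from scratch in the paper either: it is imported from \cite{RS} and Lemma~3.1 of \cite{Suz17}, which provide a free totally disconnected extension of a given system with weight control, so your deferral matches the paper's own treatment.
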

\begin{proof}
Since the statement is trivial for finite groups, we assume that $\Gamma$ is infinite.
Let $\mathfrak{F}$ denote the set of finite subsets of $\Gamma$.
For each $F \in \mathfrak{F}$, denote $\Gamma_F$ the subgroup of $\Gamma$
generated by $F$.
For each $F \in \mathfrak{F}$,
we have a $\Gamma_{F}$-equivariant unital embedding
\[\iota_{F} \colon \ell^\infty (\Gamma_F) \rightarrow \ell^\infty(\Gamma)\]
obtained from the right coset decomposition of $\Gamma$ with respect to $\Gamma_F$. (We do not require the compatibility of maps $(\iota_F)_{F \in \mathfrak{F}}$ with respect to the inclusion.) 
For any $F\in \mathfrak{F}$ and $N \in \mathbb{N}$,
thanks to Theorem 4.3 of \cite{DHZ},
we have subsets $C_1, \ldots, C_n$ of $\Gamma_F$ and $(F, N^{-1})$-invariant finite subsets $S_1, \ldots, S_n$ of $\Gamma_F$ satisfying
$\Gamma_F= \bigsqcup_{i=1}^n \bigsqcup_{x_i \in S_i} x_i C_i$.
Define \[\mathcal{S}_{F, N}:=\{ \iota_{F}(\chi_{C_i}): i=1, \ldots, n\} \subset \ell^\infty (\Gamma).\]
Let $A$ be the \Cs-subalgebra of $\ell^\infty (\Gamma)$ generated by
$\mathcal{S} := \bigcup_{F, N} \Gamma \cdot \mathcal{S}_{F, N}$.
A standard calculation of cardinals shows that $\sharp \mathcal{S}\leq \sharp \Gamma$.
Let $Y$ denote the Gelfand dual of $A$.
Then the induced action $\alpha$ of $\Gamma$ on $Y$
forms the almost finite transformation groupoid.
One can find an extension $\beta$ of $Y$
satisfying the following conditions (cf.~\cite{RS} or the proof of Lemma 3.1 in \cite{Suz17}.)
\begin{enumerate}[(i)]
\item The $\beta$ is free.
\item 
The underlying space $Z$ of $\beta$ is totally disconnected and has a clopen basis of cardinality at most $\sharp\Gamma$.
\end{enumerate}
Let $W$ be a minimal closed $\Gamma$-invariant nonempty subset of $Z$.
Let $\gamma$ be the action obtained by restricting $\beta$ to $W$.
By Lemmas \ref{Lem:invsub} and \ref{Lem:quo}, the action
$\gamma$ posses the desired properties.
\end{proof}

Corollary \ref{Corint:universal} now follows from the Main Theorem and Theorem \ref{Thm:amecas}.

\appendix
\section{Pure infiniteness of crossed products for minimal extensions}\label{Sec:pi}
In this appendix, we adapt arguments in the proof of
Lemma \ref{Lem:zerodiv} to the study of pure infiniteness of the reduced groupoid \Cs -algebras.
As an application, we solve the question of pure infiniteness
of the reduced crossed product for minimal extensions.

Recall that a simple unital C$^\ast$-algebra $A$ is said to be purely infinite \cite{Cun}
if any nonzero positive element $a\in A$
admits an element $x\in A$ satisfying $x a x^\ast =1$.
The following theorem is the main result of this appendix.

\begin{Thm}\label{Thm:pi}
Let $G$ be a minimal groupoid
which admits a quotient homomorphism $\pi$ from $G$ onto an essentially principal groupoid $H$ satisfying the following conditions.
\begin{enumerate}[\upshape (1)]
\item $\pi$ is proper.
\item For each $s\in G^{(0)}$,
the restriction map $\pi|_{Gs} \colon Gs \rightarrow H\pi(s)$ is
bijective.
\item The reduced groupoid C$^\ast$-algebra ${\rm C}^\ast _{\mathrm r}(H)$ is purely infinite.
\end{enumerate}
Then the reduced groupoid C$^\ast$-algebra ${\rm C}^\ast _{\mathrm r}(G)$ is purely infinite.
\end{Thm}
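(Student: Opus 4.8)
The plan is to transport the infiniteness of ${\rm C}^\ast_{\mathrm r}(H)$ into ${\rm C}^\ast_{\mathrm r}(G)$ along $\pi$, and then to run the first half of the proof of Lemma~\ref{Lem:zerodiv} ``in reverse'': rather than crushing a localized element to zero, we spread it out until it dominates the unit. First the setup. Conditions (1) and (2) make $f\mapsto f\circ\pi$ on compactly supported functions a unital $\ast$-embedding $\iota\colon {\rm C}^\ast_{\mathrm r}(H)\hookrightarrow {\rm C}^\ast_{\mathrm r}(G)$ (an elementary computation, in the spirit of Lemma~\ref{Lem:imp}, using that $\pi$ restricts to a bijection of each source fibre so that the convolution sums match up), with $\iota(f)=f\circ\pi$ for $f\in C(H^{(0)})$, and being a $\ast$-homomorphism it carries Cuntz subequivalence $\precsim$ in ${\rm C}^\ast_{\mathrm r}(H)$ to Cuntz subequivalence in ${\rm C}^\ast_{\mathrm r}(G)$. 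Together with the essential principality of $H$, conditions (1)--(2) also force $G$ to be essentially principal, which is what the localization below needs. Finally, recall that a unital ${\rm C}^\ast$-algebra $A$ is automatically simple and purely infinite once $1_A\precsim a$ for every nonzero positive $a\in A$: given $1_A\precsim a$ one produces $x$ with $xax^\ast=1_A$, and if $a$ lay in a proper ideal so would $1_A$. Thus it suffices to prove that $1_G\precsim a$ for every nonzero positive $a\in {\rm C}^\ast_{\mathrm r}(G)$.

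\emph{Localization step.} This copies the opening of the proof of Lemma~\ref{Lem:zerodiv}. Using that $E$ is faithful (so $E(a)\neq0$) and that $G$ is essentially principal, one approximates $a$ by a positive element of $C_{\mathrm c}(G)$ and finds $f\in C(G^{(0)})$ such that $g:=faf$ lies in $C(G^{(0)})$, satisfies $g\precsim a$ (indeed $g=(a^{1/2}f)^\ast(a^{1/2}f)\sim_{\mathrm{Cu}}a^{1/2}f^{2}a^{1/2}\le\|f\|^{2}a$), and equals $1$ on a nonempty open set; shrinking, we obtain a nonempty open $U_0\subset G^{(0)}$ with $g\equiv1$ on $\cl(U_0)$. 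Note that, in contrast with Lemma~\ref{Lem:zerodiv}, we do \emph{not} impose $\bigcup\tau_{V_i}(U_0)\neq G^{(0)}$; on the contrary, minimality of $G$ makes $U_0$ a $G$-full set, which is precisely what allows the spreading.

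\emph{Spreading step (the main obstacle).} Fix $s\in G^{(0)}$; since $\pi$ is proper, the fibre $F:=\pi^{-1}(\pi(s))\subset G^{(0)}$ is compact. Imitating the construction of $v=\sum_i\chi_{W_i}\rho_i$ and of the clopen set $Z$ in Lemma~\ref{Lem:zerodiv}---exploiting $G$-fullness of $U_0$ and the separation Lemmas~\ref{Lem:conn1} and~\ref{Lem:conn2}---one builds compact open $G$-sets $W_1,\dots,W_m$ with $s(W_i)\subset U_0$ and pairwise orthogonal $\rho_i\in C(G^{(0)})$, supported in $s(W_i)$ and identically $1$ on pairwise disjoint clopen pieces $U_i$, so that the open set $O:=\bigcup_i\tau_{W_i}(U_i)$ contains $F$. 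Because the $\rho_i$ are orthogonal and $g$ is central in $C(G^{(0)})$ with $g\equiv1$ where the $\rho_i$ live, all cross terms vanish and $h:=vgv^\ast=\sum_i\rho_i^{2}\circ\tau_{W_i}^{-1}\in C(G^{(0)})$ with $h\ge\chi_O$; moreover $h=(vg^{1/2})(vg^{1/2})^\ast\sim_{\mathrm{Cu}}g^{1/2}v^\ast vg^{1/2}\le\|v\|^{2}g$, so $h\precsim g\precsim a$. The genuinely delicate point---and the reason this step, not any computation, is the obstacle---is organizing the cover of the compact fibre $F$ so that the cut-down functions $\rho_i$ remain pairwise orthogonal: the connected-component argument of Lemma~\ref{Lem:zerodiv} is tailored to the connected component of a \emph{single} point, so one has to cover $F$ by finitely many such pieces and splice the constructions, arranging that the relevant $\tau_{W_i}$-preimages land in distinct connected components.

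\emph{Conclusion.} Being a proper map to a locally compact Hausdorff space, $\pi$ is closed, so $\pi(G^{(0)}\setminus O)$ is a closed subset of $H^{(0)}$ not containing $\pi(s)$ (as $F\subset O$); hence there is $f'\in C(H^{(0)})$ with $0\le f'\le1$, $f'(\pi(s))=1$, and $\pi^{-1}(\supp f')\subset O$. Then $\iota(f')=f'\circ\pi\le\chi_O\le h$, so $\iota(f')\precsim h\precsim a$ in ${\rm C}^\ast_{\mathrm r}(G)$. Since ${\rm C}^\ast_{\mathrm r}(H)$ is simple and purely infinite and $f'$ is a nonzero positive element, $1_H\precsim f'$ in ${\rm C}^\ast_{\mathrm r}(H)$; applying $\iota$ yields $1_G=\iota(1_H)\precsim\iota(f')\precsim a$. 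As $a$ was an arbitrary nonzero positive element of ${\rm C}^\ast_{\mathrm r}(G)$, this shows ${\rm C}^\ast_{\mathrm r}(G)$ is simple and purely infinite.
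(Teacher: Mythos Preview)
Your overall architecture matches the paper's: embed ${\rm C}^\ast_{\mathrm r}(H)$ via $\pi^\ast$, localize $a$ to a function $g\in C(G^{(0)})$ that is identically $1$ on an open set, spread $g$ so that it dominates something coming from $C(H^{(0)})$, and finish using pure infiniteness of ${\rm C}^\ast_{\mathrm r}(H)$. Your Cuntz-subequivalence packaging is tidy, and the conclusion step (using that $\pi$ is closed to produce $f'\in C(H^{(0)})$ with $\iota(f')\le h$) is correct.

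The genuine gap is exactly where you flag it: the spreading step. You propose to cover the fibre $F=\pi^{-1}(\pi(s))$ using arbitrary compact open $G$-sets $W_i$ and then ``splice'' connected-component arguments to force the $\rho_i$ to be orthogonal. This is not how the paper solves it, and your suggestion does not work as stated. The connected-component trick in Lemma~\ref{Lem:zerodiv} relies on (i) almost finiteness, via Lemma~\ref{Lem:Gsets}, to produce $W_i$ with $\tau_{W_i}^{-1}(S)$ pairwise disjoint, and (ii) $S$ being a single connected component. Neither is available here: $G$ is not assumed almost finite, and the fibre $F$ is an arbitrary compact set. For generic $G$-sets $W_i$ and a generic fibre $F$, there is no reason the preimages $\tau_{W_i}^{-1}(F_i)$ (for any partition $F=\bigsqcup F_i$) can be made disjoint inside $U_0$; a measure-type obstruction can prevent it when $F$ is large relative to $U_0$.

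The paper's key idea is to \emph{choose the $W_i$ and the point $s$ using the structure of $H$}: take compact $H$-sets $\bar W_i\subset\inte(\bar V_i)$ with $\bigcup_i r(\pi^{-1}(\bar W_i)\,U_0)=G^{(0)}$, set $W_i:=\pi^{-1}(\bar W_i)$, and then use essential principality of $H$ to pick $s\in H^{(0)}$ with $r^{-1}(s)\cap s^{-1}(s)\cap \bar V_i\bar V_j^{-1}\subset\{s\}$ for all $i,j$. With $S:=\pi^{-1}(\{s\})$, this forces the relevant $\tau_{W_j}^{-1}(S)$ to be preimages under $\pi$ of \emph{distinct points} of $H^{(0)}$, hence pairwise disjoint compact sets; the orthogonal $\rho_j$ then come from normality, with no connected-component argument at all. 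In short, the disjointness is arranged downstairs in $H$, not upstairs in $G$.

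A smaller point: in your localization step you write $g:=faf$ and simultaneously claim $g\in C(G^{(0)})$. The essential-principality argument only gives $fa_0f\in C(G^{(0)})$ for a compactly supported approximant $a_0$; for the original $a$ this can fail. You need either to work with $fa_0f$ and control the error (as the paper does, obtaining $\|xax^\ast-1\|<1$), or to pass to $(fa_0f-\varepsilon)_+\precsim faf\precsim a$ via R{\o}rdam's lemma before invoking Cuntz comparison.
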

\begin{proof}
By the first and second assumptions,
the pull-back map $\pi ^\ast \colon C_{\mathrm c}(H)\rightarrow C_{\mathrm c}(G)$ of $\pi$ defines a $\ast$-homomorphism.
As $\pi^\ast$ is compatible with the canonical conditional expectations,
it extends to an embedding
${\rm C}^\ast_{\mathrm r}(H)\rightarrow {\rm C}^\ast _{\mathrm r}(G)$.
Throughout the proof, we identify ${\rm C}^\ast _{\mathrm r}(H)$ with a C$^\ast$-subalgebra of ${\rm C}^\ast _{\mathrm r}(G)$
via this embedding.

Let $a \in {\rm C}^\ast _{\mathrm r}(G)$ be a positive element
satisfying $\|E(a)\|=1$.
Choose a positive element $a_0 \in C_{\mathrm c}(G)$ satisfying
$\|a- a_0\|< 1$ and $\|E(a_0) \|>1$.
Since $G$ is essentially principal (by assumptions (1) and (2)),
as in the proof of Lemma \ref{Lem:zerodiv},
one can find a nonnegative function $f\in C(G^{(0)})$
satisfying $\|f \| \leq 1$, $g:=fa_0 f\in C(G^{(0)})$, $\|g \|=1$, and
\[U:=\inte (\{s\in G^{(0)}: g(s)=1\})\neq \emptyset.\]

Fix a nonempty open subset $U_0$ of $U$
satisfying $\cl(U_0)\subset U$.
Take compact $H$-sets $\bar{V}_1, \ldots, \bar{V}_n, \bar{W}_1, \ldots, \bar{W}_n$
satisfying the following conditions:
$\bar{W_i}\subset \inte(\bar{V}_i)$ for all $i$
and $G^{(0)}= \bigcup _{i=1}^n r(W_iU_0)$, where $W_i:=\pi^{-1}(\bar{W_i})$ for each $i$.

Note that by assumptions (1) and (2), each $W_i$ is a compact $G$-set.
Since $H$ is essentially principal, one can find an element $s\in H^{(0)}$
satisfying
\[r^{-1}(\{s\})\cap s^{-1}(\{s\}) \cap \bar{V}_i \bar{V}_j^{-1} \subset \{s\}\]
for all $i, j$.
Put $S:= \pi^{-1}(\{s \})$.
Then the equalities $r(W_i)=\pi^{-1}(r(\bar{W_i}))$; $i\in \{1, \ldots, n\}$,
yield the relation $S\subset \bigcup_{i\in I} \tau_{W_i}(U_0)$,
where
\[I:= \left\{i\in \{1, \ldots, n\}: s \in r(\bar{W}_i)\right\}.\]
Take a maximal subset $J$ of $I$ satisfying the following property:
the elements $\tau_{\bar{W}_j}^{-1}(s); j\in J$ are pairwise distinct.
By the choice of $s$ and the third assumption, for any $i, j \in \{1, \ldots, n\}$,
\[ r^{-1}(S)\cap s^{-1}(S) \cap V_i V_j^{-1} \subset S,\]
where $V_i:=\pi^{-1}(\bar{V}_i)$ for each $i$.
From the above relations,
for any $i, j\in I$,
the equality $\tau_{\bar{W}_i}^{-1}(s)=\tau_{\bar{W}_j}^{-1}(s)$
shows the equality
\[\tau_{W_i}(U_0) \cap S =\tau_{W_j}(U_0) \cap S.\]
This yields the relation
$S\subset \bigcup_{j \in J} \tau_{W_j}(U_0)$.

Since the compact sets $\tau_{W_j}^{-1}(S); j\in J$, are pairwise disjoint, one can find nonnegative functions $\rho_j \in C(G^{(0)}); j \in J$
satisfying $\rho_j\rho_k=0$ for $j\neq k$,
$\rho_j \equiv 1$ on a neighborhood of $\tau_{W_j}^{-1}(S)\cap U_0$ for each $j\in J$,
and $\supp(\rho_j)\subset \cl(U)$ for all $j\in J$.
For each $j\in J$, take a function $\varphi_j \in C_{\mathrm c}(G)$
satisfying
$\varphi_j\equiv 1$ on a neighborhood of $W_j$ and $\supp(\varphi_j) \subset V_j$.
Set
\[v:=\sum_{j \in J} \varphi_j \rho_j.\]
Then, since $\rho_j=\rho_j g$ for all $j \in J$,
we have $vg=v$.
This implies
\[vgv^\ast=\sum_{j\in J} \varphi_j \rho_j^2 \varphi_j^\ast,\]
This equality shows that $vgv^\ast \in C(G^{(0)})$ and that
the inequality $(vgv^\ast)(t)\geq 1$ holds on a neighborhood $W$ of $S$.
Then, by the compactness of $G^{(0)}$,
one can choose a neighborhood $\bar{Z}$ of $s$
satisfying $Z:=\pi^{-1}(\bar{Z}) \subset W$.
Take a function $h\in C(G^{(0)})$ of norm at most one
satisfying $(hvgv^\ast h^\ast)(t)=1$ for all $t\in Z$.
Take functions $k_1, k_2 \in C(H^{(0)})$
of norm one satisfying $k_1 k_2=k_2$ and $\supp(k_1)\subset \bar{Z}$.
We then have
\[k_1hvgv^\ast h^\ast k_1^\ast=|k_1|^2 \in C(H^{(0)}).\]
Put $w:=k_1hv$. Then the above equality yields $w w^\ast=wgw^\ast=|k_1|^2$.

Since ${\rm C}^\ast_{\mathrm r} (H)$ is purely infinite, one can find
an element $y\in {\rm C}^\ast _{\mathrm r}(H)$
satisfying $y |k_2|^2 y^\ast =1$.
Set $x:=yk_2wf$.
Then, the equalities $k_1 k_2= k_2$ and $\|f\| =1$
yield
\[xx^\ast \leq yk_2 w w^\ast k_2^\ast y^\ast
=y|k_2|^2 y^\ast=1.\]
This shows $\| x\|\leq 1$.
Moreover, by the equality $k_1 k_2= k_2$,
we have
\[x a_0 x^\ast = yk_2wgw^\ast k_2^\ast y^\ast= y |k_2|^2 y^\ast =1.\]
This yields $\|xax^\ast -1\| <1$,
from which we conclude the invertibility of $xax^\ast$.
\end{proof}
As a special case of Theorem \ref{Thm:pi}, we obtain the following corollary
(cf.~a comment below Lemma \ref{Lem:quo}).
\begin{Cor}\label{Cor:pi}
Let $\Gamma$ be a group and $\alpha$ be a minimal and essentially free action of $\Gamma$
on a compact space whose reduced crossed product is purely infinite.
Then the reduced crossed product associated to any minimal extension of $\alpha$
is purely infinite.
\end{Cor}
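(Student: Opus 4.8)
The plan is to derive Corollary \ref{Cor:pi} as a direct instance of Theorem \ref{Thm:pi}, exactly along the lines of the remark following Lemma \ref{Lem:quo}. Let $\beta$ be a minimal extension of $\alpha$; by definition $\beta$ is a minimal action of $\Gamma$ on a compact space $Y$ together with a $\Gamma$-equivariant quotient map $p \colon Y \to X$, where $X$ is the space of $\alpha$. I would set $G := Y \rtimes_\beta \Gamma$ and $H := X \rtimes_\alpha \Gamma$, let $\pi \colon G \to H$ be the induced homomorphism $\pi(g, y) := (g, p(y))$, and use the canonical identifications ${\rm C}^\ast_{\mathrm r}(G) \cong C(Y) \rtimes_{{\mathrm r}, \beta} \Gamma$ and ${\rm C}^\ast_{\mathrm r}(H) \cong C(X) \rtimes_{{\mathrm r}, \alpha} \Gamma$. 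It then remains to check that $(G, H, \pi)$ meets the hypotheses of Theorem \ref{Thm:pi}.

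First I would observe that $G$ is minimal, because $\beta$ is a minimal action and minimality of an action is by definition minimality of its transformation groupoid. For conditions (1) and (2) of Theorem \ref{Thm:pi} --- that $\pi$ is proper and that $\pi|_{Gs} \colon Gs \to H\pi(s)$ is bijective for every $s \in G^{(0)}$ --- I would simply cite the discussion after Lemma \ref{Lem:quo}, where these were already verified for transformation-groupoid homomorphisms coming from factor maps. For condition (3) I would note that $H$ is essentially principal precisely because $\alpha$ is essentially free, and that ${\rm C}^\ast_{\mathrm r}(H) \cong C(X) \rtimes_{{\mathrm r}, \alpha} \Gamma$ is purely infinite by hypothesis. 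Theorem \ref{Thm:pi} then yields that ${\rm C}^\ast_{\mathrm r}(G)$, that is the reduced crossed product of $\beta$, is purely infinite, as claimed.

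I do not anticipate a real obstacle; the proof is a matter of unwinding definitions and invoking Theorem \ref{Thm:pi}. The two points I would be careful to state correctly are: the pure-infiniteness hypothesis is placed on $\alpha$ (equivalently $H$) whereas minimality is placed on the extension $\beta$ (equivalently $G$); and one should \emph{not} attempt to verify that $G$ itself is essentially principal --- an extension of an essentially free action need not be essentially free --- since Theorem \ref{Thm:pi} already delivers pure infiniteness of ${\rm C}^\ast_{\mathrm r}(G)$, including simplicity, which is consistent because the property that every nonzero positive element $a$ admits $x$ with $xax^\ast = 1$ forces simplicity on its own.
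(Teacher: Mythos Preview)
Your proposal is correct and follows exactly the paper's own approach: the paper states that Corollary~\ref{Cor:pi} is a special case of Theorem~\ref{Thm:pi} via the comment below Lemma~\ref{Lem:quo}, and you have simply unpacked that reference by writing out the transformation-groupoid homomorphism $\pi(g,y)=(g,p(y))$ and verifying the hypotheses one by one. One small labeling point: essential principality of $H$ is a standing hypothesis in Theorem~\ref{Thm:pi} rather than part of condition~(3), but you check it in the right place regardless.
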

Corollary \ref{Cor:pi} has the following three immediate applications.
As the first application, we give the following stronger version of Theorem 3.11 in \cite{RS}.
This is a new phenomenon in the study of amenable actions (\cite{Ana}).

\begin{Cor}\label{Cor:gRS}
Let $\Gamma$ be a countable non-amenable group.
Then $\Gamma$ admits a free minimal action $\alpha$ on the Cantor set
such that the reduced crossed products of all minimal extensions of $\alpha$
are purely infinite.
Moreover if $\Gamma$ is exact,
the action $\alpha$ can be amenable.
\end{Cor}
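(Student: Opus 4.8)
The plan is to obtain the action $\alpha$ from the R\o rdam--Sierakowski construction and then invoke Corollary \ref{Cor:pi}. By Theorem 3.11 of \cite{RS}, a countable non-amenable group $\Gamma$ admits a free minimal action $\alpha$ on the Cantor set whose reduced crossed product is purely infinite (and simple); moreover, when $\Gamma$ is exact, that theorem yields such an $\alpha$ which is in addition amenable. Fix such an $\alpha$. Since $\alpha$ is free and minimal it is, in particular, minimal and essentially free, so it meets the hypotheses of Corollary \ref{Cor:pi}.

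Corollary \ref{Cor:pi} then gives at once that the reduced crossed product associated to every minimal extension of $\alpha$ is purely infinite, which is the first assertion (taking $\alpha$ as one of its own minimal extensions recovers the pure infiniteness of $C(X) \rtimes_{\mathrm{r}, \alpha} \Gamma$ itself). If $\Gamma$ is exact one starts instead from the amenable action furnished by \cite{RS}, and the same reasoning applies verbatim; this gives the supplementary statement. Concretely, behind Corollary \ref{Cor:pi} lies Theorem \ref{Thm:pi} applied to $G = Y \rtimes \Gamma \to X \rtimes \Gamma = H$, the quotient homomorphism induced by a $\Gamma$-equivariant factor map $Y \to X$ as in the discussion following Lemma \ref{Lem:quo}: this $\pi$ is proper and restricts to a bijection $Gs \to H\pi(s)$ for every $s$, the base groupoid $H$ is essentially principal because $\alpha$ is essentially free, and its \Cs-algebra is purely infinite by choice of $\alpha$.

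In short, the substantive work is already done: Theorem \ref{Thm:pi} (the pure-infiniteness analogue of the zero-divisor argument of Lemma \ref{Lem:zerodiv}) together with the existence result of \cite{RS} yields everything. The only points to check are the routine verifications that the \cite{RS} action is minimal and essentially free and that minimal extensions fit the format of Theorem \ref{Thm:pi}, so I anticipate no genuine obstacle here.
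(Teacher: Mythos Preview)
Your proposal is correct and follows exactly the approach the paper intends: the corollary is presented there as an immediate application of Corollary~\ref{Cor:pi}, with the base action supplied by Theorem~3.11 of \cite{RS} (amenable when $\Gamma$ is exact). There is nothing to add.
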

Since a statement similar to Theorem \ref{Thm:srK} holds true for purely infinite C$^\ast$-algebras (see Theorems 1.4 and 1.9 of \cite{Cun}), Corollary \ref{Cor:gRS} can also be seen as an analogue of
Corollary \ref{Corint:ame} for non-amenable groups.

The second application is the removal of a technical restriction of Theorem 3.12 in \cite{Suz15}.
Let us temporary introduce a class of compact spaces.
We first set
$\mathcal{C}'$ to be the class of compact metrizable spaces
which admits a minimal action of a (not necessary locally compact) path-connected group.
This class contains all closed topological manifolds and compact connected Hilbert manifolds (see \cite{GW}, \cite{Suz15}).
Moreover it is stable under taking the countable direct product.
We then define the class $\mathcal{C}$ to be the class of spaces
obtained as the direct product of the Cantor set and a space contained in $\mathcal{C}'$.
\begin{Cor}\label{Cor:Kir}
For any countable non-amenable exact group $\Gamma$ and any space $X$ contained in the class $\mathcal{C}$, there is an amenable minimal free action of $\Gamma$ on $X$
whose reduced crossed product is a Kirchberg algebra.
\end{Cor}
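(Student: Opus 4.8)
The plan is to combine Corollary \ref{Cor:pi} (pure infiniteness is inherited by minimal extensions of a minimal essentially free action with purely infinite reduced crossed product) with the existing constructions that produce amenable minimal free actions on the relevant spaces, and then to check that the resulting crossed product actually satisfies all the axioms of a Kirchberg algebra. First I would recall from \cite{RS} (or, more precisely, from Corollary \ref{Cor:gRS} above) that any countable non-amenable exact group $\Gamma$ admits an \emph{amenable} free minimal action $\alpha_0$ on the Cantor set whose reduced crossed product is purely infinite; since an amenable action is in particular essentially free and the transformation groupoid is amenable, $C(\mathrm{Cantor})\rtimes_{\mathrm r}\Gamma$ is a unital separable nuclear purely infinite simple C$^\ast$-algebra satisfying the UCT, i.e.\ a unital Kirchberg algebra.

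Next I would invoke the extension machinery already cited in the paper. For $X\in\mathcal{C}$, write $X = K \times Y$ with $K$ the Cantor set and $Y\in\mathcal{C}'$, so $Y$ carries a minimal action of a path-connected (not necessarily locally compact) group, hence $Y$ carries a minimal $\Gamma$-action obtainable as in \cite{GW}, \cite{Suz15}. By the results of \cite{GW} and \cite{Suz15} (see Proposition 2.1 of \cite{Suz15} and Theorem 3.12 of \cite{Suz15}), the amenable free minimal action $\alpha_0$ on the Cantor set admits an amenable free minimal extension $\alpha$ whose underlying space is exactly $X = K\times Y$: one takes a suitable product/relatively-universal extension and then restricts to a minimal component, using that amenability of the acting group together with amenability of $\alpha_0$ is preserved and that freeness can be arranged on the path-connected factor. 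Here the point is that the technical restriction removed, relative to \cite{Suz15}, Theorem 3.12, is precisely the verification of pure infiniteness of the crossed product of the extension, which in \cite{Suz15} required extra hypotheses.

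That verification is now supplied by Corollary \ref{Cor:pi}: since $\alpha$ is a minimal extension of the minimal essentially free action $\alpha_0$ and $C(K)\rtimes_{\mathrm r}\Gamma$ is purely infinite, $C(X)\rtimes_{\mathrm r,\alpha}\Gamma$ is purely infinite and simple (simplicity because $\alpha$ is minimal and essentially free). To conclude that it is a Kirchberg algebra I would check the remaining points: separability and unitality are clear as $X$ is compact metrizable and $\Gamma$ countable; nuclearity and the UCT follow from amenability of the transformation groupoid (\cite{Tu99}, \cite{HK}); and purely infinite simple plus the above gives the Kirchberg property. The main obstacle I anticipate is not the C$^\ast$-algebraic bookkeeping but arranging, in the extension step, that the extension $\alpha$ simultaneously (a) has underlying space literally homeomorphic to the prescribed $X\in\mathcal{C}$, (b) remains free (which is why the path-connected factor in the definition of $\mathcal{C}'$ is essential, cf.\ \cite{GW}, \cite{Suz15}), (c) remains amenable, and (d) is minimal; threading all four through a single construction is the delicate part, and it is handled exactly as in the cited arguments of \cite{Suz15} and \cite{RS}, with Corollary \ref{Cor:pi} now removing the obstruction that forced the extra hypothesis there.
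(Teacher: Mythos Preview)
Your proposal is correct and follows essentially the same route as the paper: the paper's proof is simply the combination of Corollary~\ref{Cor:gRS} with the skew-product extension result of \cite{Suz15} (stated there as Proposition/Theorem~2.1), and you have identified both ingredients and unpacked the Kirchberg-algebra bookkeeping that the paper leaves implicit. Note that your separate appeal to Corollary~\ref{Cor:pi} is slightly redundant, since Corollary~\ref{Cor:gRS} already packages the conclusion that \emph{all} minimal extensions of $\alpha_0$ have purely infinite crossed product; once you cite \ref{Cor:gRS} you need not re-invoke \ref{Cor:pi}.
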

\begin{proof}
The statement immediately follows from Theorem 2.1 of \cite{Suz15}
and Corollary \ref{Cor:gRS}.
\end{proof}
The third application is on the universal minimal action.
\begin{Cor}\label{Cor:univpi}
For any non-amenable group,
the reduced crossed product of the universal minimal action is purely infinite.
\end{Cor}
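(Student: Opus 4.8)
The plan is to deduce the statement from Corollary \ref{Cor:pi}. Recall from the introduction that the universal minimal action of $\Gamma$ is a minimal action on a compact space which, by its defining universal property (\cite{Ell}), factors onto every minimal action of $\Gamma$ on a compact space. Hence, for \emph{any} minimal action $\alpha$ of $\Gamma$, the universal minimal action is a minimal extension of $\alpha$ in the sense of Section \ref{Sec:pre}. Consequently, once we have produced a single minimal essentially free action $\alpha$ of $\Gamma$ whose reduced crossed product is purely infinite, Corollary \ref{Cor:pi}, applied to this $\alpha$ with the universal minimal action as a minimal extension, shows that the reduced crossed product of the universal minimal action is purely infinite. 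So the entire content is the existence of such an $\alpha$.

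For $\Gamma$ countable this is exactly Corollary \ref{Cor:gRS}, which supplies a free (hence essentially free) minimal action $\alpha$ of $\Gamma$ on the Cantor set all of whose minimal extensions have purely infinite reduced crossed product; in fact one need not even invoke Corollary \ref{Cor:pi} separately, since the universal minimal action is itself one of these minimal extensions, so Corollary \ref{Cor:gRS} already gives the conclusion directly.

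For a general, possibly uncountable, non-amenable $\Gamma$ one needs the arbitrary-cardinality analogue of Corollary \ref{Cor:gRS}: every non-amenable group admits a minimal essentially free action on a totally disconnected compact space with purely infinite reduced crossed product. I would obtain this exactly as in the proof of Theorem \ref{Thm:amecas}, with the amenable tiling input replaced by the paradoxicality data underlying Corollary \ref{Cor:gRS}. Concretely: fix a countable non-amenable subgroup $\Lambda \leq \Gamma$ (non-amenability is detected by finitely generated subgroups), use the coset embeddings $\iota_F \colon \ell^\infty(\Gamma_F) \to \ell^\infty(\Gamma)$ to transplant the relevant $\Lambda$-data into a $\Gamma$-invariant unital C$^\ast$-subalgebra of $\ell^\infty(\Gamma)$ with a clopen basis of cardinality at most $\sharp\Gamma$, pass to a totally disconnected free extension with the same cardinality bound (as in \cite{RS} or the proof of Lemma 3.1 of \cite{Suz17}), and restrict to a minimal closed $\Gamma$-invariant subset using Lemmas \ref{Lem:invsub} and \ref{Lem:quo}; then feed the resulting $\alpha$ into Corollary \ref{Cor:pi}.

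Thus the deduction is short; the only genuinely delicate point sits inside the arbitrary-cardinality form of Corollary \ref{Cor:gRS}, namely verifying that the paradoxicality of the chosen $\Lambda$-system survives both the transplantation into $\ell^\infty(\Gamma)$ and the restriction to a minimal $\Gamma$-subsystem, so that pure infiniteness of the reduced crossed product is retained. Everything else is routine bookkeeping with cardinals.
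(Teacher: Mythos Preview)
Your overall plan---produce one minimal essentially free $\Gamma$-action with purely infinite reduced crossed product and then invoke Corollary~\ref{Cor:pi} with the universal minimal action as a minimal extension---is exactly the paper's approach, and your treatment of the countable case via Corollary~\ref{Cor:gRS} is fine.

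For uncountable $\Gamma$, however, the paper proceeds differently and more directly: it simply observes that the arguments of \cite{RS} go through for arbitrary non-amenable groups once sequences are replaced by nets, yielding a minimal free action on a totally disconnected compact space with purely infinite reduced crossed product. No passage through a countable subgroup, no coset transplantation, and no analogue of the Theorem~\ref{Thm:amecas} construction is needed. The point is that $\Gamma$ itself is non-amenable, so $\Gamma$ has its own paradoxical decomposition, and the filling/local-boundary machinery of \cite{RS} applies to $\Gamma$ directly; the only countability-sensitive steps in \cite{RS} are the iterative constructions of invariant subalgebras, and those are handled by nets indexed by finite subsets of $\Gamma$.

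Your proposed route through a countable subgroup $\Lambda$ has two concrete problems. First, the references to Lemmas~\ref{Lem:invsub} and~\ref{Lem:quo} are misplaced: those lemmas are permanence results for \emph{almost finiteness}, and say nothing about pure infiniteness; they play no role here. Second, and more seriously, the ``delicate point'' you flag is a genuine gap rather than routine bookkeeping. Paradoxicality data for $\Lambda$, pushed into $\ell^\infty(\Gamma)$ along coset embeddings, tells you something about the restricted $\Lambda$-action, but after passing to a minimal closed $\Gamma$-invariant subset $W$ you have no control over what the $\Lambda$-orbits look like inside $W$, nor any a~priori factor map from the $\Gamma$-system on $W$ onto a minimal essentially free system already known to have purely infinite crossed product---which is precisely what Corollary~\ref{Cor:pi} would require. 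The paper's net-based adaptation of \cite{RS} sidesteps this entirely.
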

\begin{proof}
By replacing sequences by appropriate nets in the arguments in \cite{RS}, one can prove the following claim.
Any non-amenable group
admits a minimal free action on a totally disconnected compact space
whose reduced crossed product is purely infinite.
The statement then follows from Corollary \ref{Cor:pi}.
\end{proof}
\section{Eigenvalue set for \`etale groupoids and constructions of distinguished minimal actions}\label{Sec:ES}
In this section, we introduce an isomorphism invariant of \`etale groupoids which we call the eigenvalue set. This invariant has already been used in Remark 3.12 of
\cite{Suz17}.
Since only a few isomorphism invariants are known for \`etale groupoids, this invariant itself would be of independent interest.
Here we use this invariant to establish the following theorem.
\begin{Thm}\label{Thm:examples}
Let $\Gamma$ be a countable non-torsion exact group.
Let $X$ be a space in the class $\mathcal{C}$ $($see above Corollary \ref{Cor:Kir} for the definition$)$.
Then there is a family of continuously many amenable minimal free actions of $\Gamma$
on $X$ whose transformation groupoids are pairwise non-isomorphic.
Moreover, one can additionally require the following properties to each member when
\begin{description}
 \item[$\Gamma$ is amenable] almost finiteness of the transformation groupoid,
 \item[$\Gamma$ is non-amenable]pure infiniteness of the reduced crossed product,
\item [$\Gamma$ is non-amenable and $X$ is the Cantor set]pure infiniteness of the transformation groupoid in Matui's sense $($Definition 4.9 in \cite{Mat15}$)$.
\end{description}
\end{Thm}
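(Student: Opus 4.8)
The plan is to parametrize the required actions by subsets of the set of prime numbers and to separate the resulting transformation groupoids by means of the eigenvalue set. Since $\Gamma$ is non-torsion, fix an element $t\in\Gamma$ of infinite order, and for a set $S$ of primes let $\mathfrak n_S:=\prod_{p\in S}p^{\infty}$ be the associated supernatural number, with $\mu_{\mathfrak n_S}\subset\mathbb T$ the group of $\mathfrak n_S$-th roots of unity. After recording the precise definition of the eigenvalue set $\mathcal E(\cdot)$ of an \'etale groupoid together with its elementary properties --- invariance under groupoid isomorphism, and monotonicity under passing to a clopen subgroupoid containing the unit space and to a factor of the kind appearing in Lemma \ref{Lem:quo} --- the first step is to construct, for each $S$, an amenable minimal free action $\alpha_S$ of $\Gamma$ on $X$ which, after restriction to the cyclic subgroup generated by $t$, still factors onto the odometer of type $\mathfrak n_S$. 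To build $\alpha_S$ I would start from a model action carrying this odometer datum and push it through the techniques of \cite{RS} together with the refinements used in \cite{Suz15}, \cite{Suz17} and in the proof of Theorem \ref{Thm:amecas}; this produces a free amenable action on a space of the class $\mathcal C$, and restricting to a minimal closed invariant subset (legitimate by Lemma \ref{Lem:invsub}) yields $\alpha_S$, still free, amenable, and carrying the prescribed odometer factor of the $t$-sub-dynamics.

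The decisive step is then the identification $\mathcal E(X\rtimes_{\alpha_S}\Gamma)=\mu_{\mathfrak n_S}$. The inclusion $\supseteq$ is easy: the topological full group of the sub-dynamical system generated by $t$ sits inside $[[X\rtimes_{\alpha_S}\Gamma]]$, and the odometer factor forces all of $\mu_{\mathfrak n_S}$ into the eigenvalue set via the monotonicity properties. The inclusion $\subseteq$ --- that no further eigenvalues occur, so that $S$ can be read off from $\mathcal E$ --- is the main obstacle: the auxiliary data inserted to make the action free, amenable and supported on the prescribed space $X$ must be chosen so that it creates no extra finite equicontinuous structure, which in the end amounts to a careful control of the maximal equicontinuous factor of the system produced by the construction of \cite{RS}. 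Granting the identification, the assignment $S\mapsto X\rtimes_{\alpha_S}\Gamma$ is injective up to groupoid isomorphism, so the $\alpha_S$ form a family of continuously many amenable minimal free actions of $\Gamma$ on $X$ with pairwise non-isomorphic transformation groupoids.

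It remains to attach the structural refinements, each of which is inherited from results already available. When $\Gamma$ is amenable, one arranges $\alpha_S$ to be a minimal extension of a free action on the Cantor set admitting clopen tower decompositions with arbitrary invariance (as in Theorem \ref{Thm:amecas}, using Theorem 4.3 of \cite{DHZ}, or Corollary 6.4 of \cite{DZ} when $\Gamma$ has local subexponential growth); that factor is almost finite by Lemma \ref{Lem:cas}, whence $X\rtimes_{\alpha_S}\Gamma$ is almost finite by Lemma \ref{Lem:quo}. When $\Gamma$ is non-amenable (and exact), one starts the construction from a base action whose reduced crossed product is purely infinite, as in Corollary \ref{Cor:gRS}; since $\alpha_S$ is a minimal extension of it, Corollary \ref{Cor:pi} gives pure infiniteness of $C(X)\rtimes_{\mathrm r,\alpha_S}\Gamma$. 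Finally, when $\Gamma$ is non-amenable and $X$ is the Cantor set, this is upgraded to pure infiniteness of the transformation groupoid in Matui's sense (Definition 4.9 of \cite{Mat15}) by repeating, at the groupoid level and on the totally disconnected extension, the orbit-breaking and local argument used for Lemma \ref{Lem:zerodiv} and in Appendix \ref{Sec:pi}. Combining everything yields Theorem \ref{Thm:examples}.
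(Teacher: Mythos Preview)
Your overall plan---use the eigenvalue set as an isomorphism invariant, build actions with prescribed eigenvalue data via an odometer factor for the $t$-subdynamics, and inherit the structural properties from Lemmas~\ref{Lem:quo}, \ref{Lem:cas} and Corollary~\ref{Cor:pi}---matches the paper's strategy. The handling of the refinements (almost finiteness, pure infiniteness) is essentially the same as in the paper.

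The genuine gap is the step you yourself flag as ``the main obstacle'': the inclusion $\mathcal E(X\rtimes_{\alpha_S}\Gamma)\subseteq\mu_{\mathfrak n_S}$. This asks you to bound the eigenvalues of \emph{every} element of the topological full group $[[X\rtimes_{\alpha_S}\Gamma]]$, not merely of $\alpha_{S,t}$. The topological full group contains all piecewise-$\Gamma$ homeomorphisms, and the auxiliary data you feed in from \cite{RS}, \cite{Suz15}, \cite{Suz17} (to obtain freeness, amenability, pure infiniteness, the correct underlying space) has no reason not to create new eigenvalues; ``careful control of the maximal equicontinuous factor'' of the $\Gamma$-action does not by itself constrain eigenvalues of arbitrary full-group elements. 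You give no mechanism for this control, and I do not see one.

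The paper sidesteps this difficulty completely. Rather than sets of primes, it parametrizes by \emph{arbitrary countable} subsets $S\subset\mathbb T$ and proves only the easy inclusion $S\subset E(X\rtimes_{\gamma_S}\Gamma)$---exactly your ``$\supseteq$'' direction. The missing ingredient in your argument is Proposition~\ref{Prop:countability}: for a second countable groupoid the eigenvalue set is always countable. With this in hand a pure cardinality argument finishes the proof: if the family $(\gamma_S)_S$ realized fewer than $2^{\aleph_0}$ isomorphism classes, the corresponding eigenvalue sets would form fewer than $2^{\aleph_0}$ countable subsets of $\mathbb T$, whose union could not cover $\mathbb T$; but every singleton $\{\lambda\}\subset\mathbb T$ occurs as some $S$, a contradiction. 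No exact computation of $E(\cdot)$ is needed. Replacing your injectivity step with this counting argument repairs the proof.
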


We recall that in this paper, groupoids are assumed to be \`etale, locally compact, with the compact unit space.

Let $\mathbb{T}$ be the group of complex numbers of modulus one.
Let $R$ denote the action of $\mathbb{T}$ on itself by the group multiplication.
The next definition is motivated by the analogous notion in the measured setting (see \cite{Gla} for instance).
\begin{Def}
Let $X$ be a compact space.
Let $h$ be a homeomorphism on $X$.
We say that an element $\lambda \in \mathbb{T}$
is an eigenvalue of $h$
if there is a continuous map $f \colon X \rightarrow \mathbb{T}$
satisfying $f\circ h= \lambda f$.
We denote $E(h)$ the set of eigenvalues of $h$.
\end{Def}
\begin{Def}
Let $G$ be a groupoid.
Define the subset $E(G)$ of $\mathbb{T}$
to be
\[E(G):= \bigcup_{\varphi \in [[G]]} E(\varphi).\]
Here $[[G]]$ denotes the topological full group of $G$
(see Section \ref{SubSec:Groupoids} for the definition).
We call $E(G)$ the eigenvalue set of $G$.
\end{Def}
Since an isomorphism of \`etale groupoids induces
the conjugacy of the actions of their topological full groups,
the eigenvalue set is an isomorphism invariant for \`etale groupoids.

Next we record a few basic properties of the eigenvalue set.
\begin{Lem}\label{Lem:ES}
Let $X$ be a second countable compact Hausdorff space.
Then for any homeomorphism $h$ on $X$,
the set $E(h)$ is countable.
\end{Lem}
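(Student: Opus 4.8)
The plan is to show that a homeomorphism $h$ on a second countable compact Hausdorff space $X$ can have only countably many eigenvalues, by exploiting the fact that distinct eigenvalues produce "almost orthogonal" eigenfunctions in the $C^\ast$-algebra $C(X)$, which cannot happen uncountably often in a separable space.

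First I would set up the following observation: if $\lambda\in E(h)$ with a $\mathbb{T}$-valued eigenfunction $f$ (so $f\circ h=\lambda f$), then $f$ is a unitary element of the unital C$^\ast$-algebra $C(X)$, and conjugation by the canonical unitary $u_h$ implementing $h$ (in $C(X)\rtimes_{\langle h\rangle}\mathbb{Z}$, or simply working with the $\ast$-automorphism $\sigma$ of $C(X)$ given by $g\mapsto g\circ h$) satisfies $\sigma(f)=\lambda f$. Now suppose $\lambda_1,\lambda_2\in E(h)$ are distinct, with eigenfunctions $f_1,f_2$. Then $f_1\overline{f_2}$ is a unitary in $C(X)$ satisfying $\sigma(f_1\overline{f_2})=\lambda_1\overline{\lambda_2}\,f_1\overline{f_2}$, so the set $E(h)$ is in fact a subgroup of $\mathbb{T}$; thus it suffices to bound the number of eigenfunctions rather than worry about the group structure directly. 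The key quantitative point: if $\mu$ is any $h$-invariant Borel probability measure on $X$ (which exists since $X$ is compact metrizable and $\langle h\rangle\cong\mathbb{Z}$ is amenable), then for distinct eigenvalues $\lambda_1\neq\lambda_2$ with $\mathbb{T}$-valued eigenfunctions $f_1,f_2$, we have $\int_X f_1\overline{f_2}\,d\mu = \int_X \sigma(f_1\overline{f_2})\,d\mu = \lambda_1\overline{\lambda_2}\int_X f_1\overline{f_2}\,d\mu$, forcing $\int_X f_1\overline{f_2}\,d\mu=0$. Hence the eigenfunctions, normalized in $L^2(X,\mu)$, form an orthonormal family; since $L^2(X,\mu)$ is separable (as $X$ is second countable, $C(X)$ is separable, and $C(X)$ is dense in $L^2(X,\mu)$), this family is at most countable.

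The only gap in that argument is that a priori the map $\lambda\mapsto f_\lambda$ need not be injective: several distinct $\mathbb{T}$-valued functions could be eigenfunctions for the same $\lambda$. But that is harmless — if $f,f'$ are both eigenfunctions for $\lambda$, then $f\overline{f'}$ is a $\sigma$-fixed $\mathbb{T}$-valued function, i.e. $f\overline{f'}\in C(X)$ is invariant; this does not obstruct the counting because we only need one eigenfunction per eigenvalue to deduce orthogonality. Concretely: choose, for each $\lambda\in E(h)$, a single $\mathbb{T}$-valued eigenfunction $f_\lambda$. The family $\{f_\lambda\}_{\lambda\in E(h)}$ is then orthonormal in $L^2(X,\mu)$ by the computation above, and distinct $\lambda$ give distinct (indeed orthogonal, hence non-equal) $f_\lambda$. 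Therefore $\sharp E(h)\leq \dim L^2(X,\mu)\leq\aleph_0$, which is the claim.

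I expect the main (though mild) obstacle to be verifying the existence and correct use of the invariant measure: one must note that $X$ being second countable compact Hausdorff makes it metrizable, so the Krylov–Bogolyubov theorem applies to the single homeomorphism $h$ and yields an $h$-invariant Borel probability measure $\mu$; and then one must observe that $C(X)$ embeds densely into $L^2(X,\mu)$ so that separability of $C(X)$ (immediate from second countability of $X$) transfers to $L^2(X,\mu)$. A small alternative, avoiding measures entirely, is to argue inside the separable C$^\ast$-algebra $C(X)\rtimes_{\sigma}\mathbb{Z}$ directly: distinct eigenvalues give unitaries $v_\lambda=f_\lambda u_h\in C(X)\rtimes_\sigma\mathbb{Z}$ which are eigenvectors of the dual $\mathbb{T}$-action with distinct characters, hence linearly independent with pairwise distance bounded below in a suitable state, again forcing countability. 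Either route is routine once the setup is in place; no serious difficulty remains.
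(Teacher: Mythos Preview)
Your argument is correct but follows a different route from the paper. The paper works entirely inside the separable C$^\ast$-algebra $C(X)$ with the sup norm: for each $\lambda\in E(h)$ it picks a unitary eigenfunction $f_\lambda$, and shows directly that $\|f_\lambda - f_\mu\|\geq 1$ whenever $\lambda\neq\mu$, by observing that $\|f_\lambda - f_\mu\| = \|\zeta^n f_\lambda f_\mu^\ast - 1\|$ for all $n\in\mathbb{Z}$ (where $\zeta=\lambda\bar\mu$) and then choosing $n$ so that the scalar $\zeta^n$ rotates the unitary $f_\lambda f_\mu^\ast$ far from $1$. Separability of $C(X)$ then forces $E(h)$ to be countable. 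Your approach instead produces an $h$-invariant probability measure via Krylov--Bogolyubov, passes to $L^2(X,\mu)$, and uses the classical Koopman-operator orthogonality of eigenfunctions with distinct eigenvalues. Your route is arguably more conceptual and connects to standard ergodic-theoretic spectral language; the paper's route is slightly more self-contained in that it avoids invoking the existence of an invariant measure and stays inside $C(X)$. Both are short and neither has any real advantage for the purposes of the appendix.
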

\begin{proof}
For each $\lambda \in E(h)$,
take a unitary element $f_\lambda\in C(X)$
satisfying $f_\lambda\circ h=\lambda f_\lambda$.
We show that for any two distinct $\lambda, \mu \in E(h)$,
\[\|f_\lambda -f_\mu\|\geq 1.\]
To see this, note that for any $\nu, \eta \in \mathbb{T}$
with $\nu \neq 1$,
there is $n\in \mathbb{Z}$ satisfying $|\nu^n\eta - 1|\geq 1$.
Put $\zeta:=\lambda \bar{\mu}$.
Since
\[\|f_\lambda -f_\mu\|=\|(f_\lambda f_\mu^{\ast})\circ h^n - 1\|
=\|\zeta^nf_\lambda f_\mu^{\ast} - 1\|\]
for any $n \in \mathbb{Z}$, we obtain the desired inequality.

Since $C(X)$ is separable, the set $E(h)$ must be countable.
\end{proof}
\begin{Prop}\label{Prop:countability}
Let $G$ be a second countable groupoid.
Then its eigenvalue set $E(G)$ is countable.
\end{Prop}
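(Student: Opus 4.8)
The plan is to reduce the statement to Lemma \ref{Lem:ES}, which already handles a single homeomorphism. The key point is that for a second countable groupoid $G$, the topological full group $[[G]]$, although possibly uncountable as a set, cannot contribute uncountably many eigenvalues, because each element of $[[G]]$ is implemented by a compact open $G$-set, and there are only countably many such sets up to the relevant identification. First I would fix a countable basis $\mathcal{B}$ for the topology of $G$ consisting of open $G$-sets with compact closure; this is possible since $G$ is \`etale, locally compact, Hausdorff and second countable, so the bisections of this form constitute a basis. Every compact open $G$-set $V$ with $r(V)=s(V)=G^{(0)}$ is then a finite union of members of $\mathcal{B}$, so the collection of such $V$ is countable; consequently $[[G]]$ itself is countable, and then $E(G)=\bigcup_{\varphi\in[[G]]}E(\varphi)$ is a countable union of countable sets by Lemma \ref{Lem:ES} (applied to $X=G^{(0)}$, which is second countable compact Hausdorff by hypothesis), hence countable.

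The only gap to fill carefully is the claim that a compact open $G$-set decomposes as a \emph{finite} union of basic open $G$-sets: compactness gives a finite subcover by basic open sets, but one must check that restricting these basic sets to lie inside $V$ (intersecting with $V$) keeps them open $G$-sets and keeps the union equal to $V$, which is immediate since an open subset of a $G$-set is again a $G$-set and $V$ is open. Thus the assignment $V\mapsto\{B\in\mathcal{B}:B\subset V\}$ composed with "take the finite subcollection that covers" realizes each such $V$ as determined by a finite subset of the countable set $\mathcal{B}$, so there are at most countably many, hence at most countably many $\tau_V$.

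Actually one can shortcut even further: since $G$ is essentially principal is \emph{not} assumed here, distinct $G$-sets may give the same $\tau_V$, but that only makes the count smaller, so the bound still holds. I would therefore structure the proof as: (1) second countability of $G$ yields a countable basis of relatively compact open bisections; (2) hence countably many compact open $G$-sets $V$ with full range and source, hence $[[G]]$ is countable; (3) $G^{(0)}$ is second countable compact Hausdorff, so Lemma \ref{Lem:ES} applies to each $\varphi\in[[G]]$; (4) a countable union of countable sets is countable.

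The main obstacle, such as it is, is purely bookkeeping in step (1)--(2): making precise that second countability of $G$ descends to a countable supply of compact open bisections and that finite unions of these exhaust all compact open $G$-sets. There is no analytic difficulty; once $[[G]]$ is seen to be countable the result is an immediate corollary of Lemma \ref{Lem:ES}. One minor point to state explicitly is that the $G$-set witnessing $\varphi=\tau_V$ need not be unique when $G$ is not essentially principal, but this is irrelevant to the cardinality bound.
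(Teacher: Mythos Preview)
Your proposal is correct and follows essentially the same route as the paper's proof: second countability forces the collection of compact open subsets of $G$ to be countable (since each is a finite union of basic open sets), hence $[[G]]$ is countable, and then Lemma~\ref{Lem:ES} together with the countable-union argument finishes. The paper states this in two sentences without spelling out the finite-subcover step or the non-uniqueness remark, but the logic is identical; your additional care about choosing a basis of bisections is harmless though not strictly needed, since one only has to count compact open subsets, not compact open $G$-sets specifically.
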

\begin{proof}
Since $G$ is second countable,
there are only countably many compact open subsets in $G$.
This in particular shows that the topological full group $[[G]]$ is countable.
The claim now follows from Lemma \ref{Lem:ES}.
\end{proof}
\begin{Prop}\label{Prop:factor}
Let $G$ and $H$ be groupoids.
If there is a quotient homomorphism $\pi \colon G \rightarrow H$
satisfying
\begin{enumerate}[\upshape (1)]
\item $\pi$ is proper,
\item for each $s\in G^{(0)}$,
the restriction map $\pi|_{Gs} \colon Gs \rightarrow H\pi(s)$ is
bijective,
\item $\pi(G^{(0)})= H^{(0)}$,
\end{enumerate}
then $E(H) \subset E(G)$.
\end{Prop}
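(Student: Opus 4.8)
The plan is to lift, for each eigenvalue $\lambda$ of an element $\psi \in [[H]]$, the witnessing data from $H$ to $G$: first I would lift $\psi$ to an element $\varphi \in [[G]]$, and then pull back the associated eigenfunction along the map induced by $\pi$ on the unit spaces. Concretely, fix $\lambda \in E(H)$, an element $\psi \in [[H]]$, and a continuous map $f \colon H^{(0)} \to \mathbb{T}$ with $f \circ \psi = \lambda f$. Writing $\psi = \tau_W$ for a compact open $H$-set $W$ with $r(W) = s(W) = H^{(0)}$, set $V := \pi^{-1}(W)$. Continuity of $\pi$ makes $V$ open, and properness (assumption (1)) makes it compact; the main point to verify, using assumption (2), is that $V$ is a $G$-set with $r(V) = s(V) = G^{(0)}$, so that $\varphi := \tau_V$ lies in $[[G]]$. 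For injectivity of the source on $V$: if $g, g' \in V$ satisfy $s(g) = s(g') = t$, then $g, g' \in Gt$, while $\pi(g), \pi(g') \in W$ have the common source $\pi(t)$, whence $\pi(g) = \pi(g')$ because $W$ is an $H$-set, and then $g = g'$ by injectivity of $\pi|_{Gt}$; injectivity of the range follows by applying this to $V^{-1} = \pi^{-1}(W^{-1})$. For fullness of the source: given $t \in G^{(0)}$, surjectivity of $\pi|_{Gt}\colon Gt \to H\pi(t)$ together with $s(W) = H^{(0)}$ yields $g \in Gt$ with $\pi(g) \in W$, i.e. $g \in V$ with $s(g) = t$; fullness of the range is the same statement for $W^{-1}$.

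Next I would record the intertwining relation $\pi_0 \circ \varphi = \psi \circ \pi_0$, where $\pi_0 := \pi|_{G^{(0)}}\colon G^{(0)} \to H^{(0)}$ is the restriction of $\pi$, continuous and surjective by assumption (3). Indeed, for $t \in G^{(0)}$, if $v$ is the unique element of $V$ with $s(v) = t$, then $\pi(v) \in W$ has source $\pi(t)$, so $\pi(v) = (s|_W)^{-1}(\pi(t))$, and hence $\pi_0(\varphi(t)) = r(\pi(v)) = \tau_W(\pi(t)) = \psi(\pi_0(t))$. Setting $\tilde f := f \circ \pi_0 \colon G^{(0)} \to \mathbb{T}$, which is continuous and $\mathbb{T}$-valued, the intertwining relation gives $\tilde f \circ \varphi = f \circ \psi \circ \pi_0 = \lambda\,(f \circ \pi_0) = \lambda \tilde f$. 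Therefore $\lambda \in E(\varphi) \subset E(G)$, and since $\lambda \in E(H)$ was arbitrary, $E(H) \subset E(G)$.

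The only step that requires genuine care is the bookkeeping in the first paragraph: that the preimage under $\pi$ of a compact open $H$-set implementing an element of $[[H]]$ is again a compact open $G$-set implementing an element of $[[G]]$ — in other words, that the hypotheses force $[[H]]$ to embed into $[[G]]$ compatibly with the actions on the unit spaces. Everything after that is formal. Alternatively, one could phrase the argument inside the canonical embedding ${\rm C}^\ast_{\mathrm r}(H) \hookrightarrow {\rm C}^\ast_{\mathrm r}(G)$ from the proof of Theorem \ref{Thm:pi}, under which $u_\psi \mapsto u_\varphi$ and $f \mapsto f \circ \pi_0$, so that the eigenvalue equation becomes a conjugation identity that transports verbatim.
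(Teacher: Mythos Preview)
Your proof is correct and follows exactly the same approach as the paper's: show that the preimage under $\pi$ of a compact open $H$-set implementing $\psi\in[[H]]$ is a compact open $G$-set implementing a lift $\varphi\in[[G]]$ satisfying $\pi_0\circ\varphi=\psi\circ\pi_0$, then pull back the eigenfunction along $\pi_0$. The paper's version is a terse three-sentence sketch of precisely this argument; you have simply supplied the details it omits.
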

\begin{proof}
Observe that by the first two assumptions,
for any compact open $H$-set $V$,
the preimage $\pi^{-1}(H)$ is a compact open $G$-set.
This shows that for any $\varphi \in [[H]]$,
the group $[[G]]$ contains an extension $\tilde{\varphi}$ of $\varphi$.
This implies the desired inclusion.
\end{proof}

\begin{proof}[Proof of Theorem \ref{Thm:examples}]
We first prove the claim in the case that $X$ is the Cantor set.
Fix an element $t \in \Gamma$ of infinite order.
We first show that for any countable subset $S$ of $\mathbb{T}$,
there is an action $\alpha_S$ of $\Gamma$ on $X$ which satisfies the prescribed properties and the relation \[S \subset E(X \rtimes_{\alpha_S} \Gamma).\]
To show the claim, consider the product homeomorphism $R_S:=\prod_{s\in S} R_s$
on $\mathbb{T}^S$.
Let $Z$ be a minimal $R_S$-invariant closed subset of $\mathbb{T}^S$.
Let $T_S$ denote the restriction of $R_S$ on $Z$.
Obviously $T_S$ defines a minimal homeomorphism on $Z$.
It is clear that $E(T_S)$ contains $S$. 
Applying Lemma 3.1 of \cite{Suz17} to 
the action of $\Upsilon :=\{ t^n :n\in \mathbb{Z}\}$ on $Z$ given by $t\mapsto T_S$,
we obtain a minimal action $\alpha_S$ of $\Gamma$ on $X$
satisfying $S \subset E(\alpha_{S, t})$.
This yields the relation $S \subset E(X \rtimes _{\alpha_S} \Gamma)$.

From now on we divide the proof into two cases.
We first consider the case that $\Gamma$ is amenable.
Take a minimal free action $\beta$ of $\Gamma$ on the Cantor set $X$
whose transformation groupoid is almost finite \cite{Ker} (see also Theorem \ref{Thm:amecas}).
Take a minimal subsystem $\gamma_S$ of the diagonal action of $\alpha_S$ and $\beta$.
Then notice that the underlying space of $\gamma_S$
is homeomorphic to the Cantor set as it is totally disconnected, perfect, metrizable, and compact.
Clearly $\gamma_S$ factors onto both $\alpha_S$ and $\beta$.
Therefore, by Lemma \ref{Lem:quo}, the transformation groupoid of $\gamma_S$ is almost finite.
Also, by Proposition \ref{Prop:factor}, its eigenvalue set contains $S$.
Now standard calculations of cardinals show that the family 
$(\gamma_S)_S$ contains the desired family.

We next assume that the group $\Gamma$ is non-amenable.
It is not hard to conclude from the proof of Theorem 6.11 of \cite{RS} that
any minimal action of $\Gamma$ on a metrizable compact space
admits an amenable minimal extension on the Cantor set whose transformation groupoid is purely infinite.
The rest of the proof is similar to the amenable case.

The general case follows from
the Cantor set case, Proposition 2.1 of \cite{Suz15},
Lemma \ref{Lem:quo}, Corollary \ref{Cor:pi}, and Proposition \ref{Prop:factor}.
\end{proof}
\begin{Rem}
We do not know if one can further require families in Theorem \ref{Thm:examples} to give rise to pairwise non-isomorphic reduced crossed products.
This is the case for virtually free groups as shown in \cite{Suz13} and \cite{Suz15}.
\end{Rem}
\subsection*{Fundings}
This work was supported by the Japan Society for the Promotion of Science Research Fellowships for Young Scientists [PD 28-4705], the Japan Society for the Promotion of Science KAKENHI Grant-in-Aid for Young Scientists (Start-up) [No.~17H06737], and tenure track funds of Nagoya University.
\subsection*{Acknowledgement}
The author is grateful to the second reviewer of a journal
for his/her careful reading and valuable comments.

\end{document}